\numberwithin{equation}{section}
\numberwithin{figure}{section}
\theoremstyle{plain}
\newtheorem{thm}{Theorem}[section]
\newtheorem{lem}[thm]{Lemma}
\newtheorem{cor}[thm]{Corollary}
\newtheorem{claim}[thm]{Claim}
\theoremstyle{remark}
\newtheorem{rmk}[thm]{Remark}
\newcommand{\M}{\operatorname{M}}
\newcommand{\Hf}{\operatorname{H}}
\newcommand{\Od}{\operatorname{O}}
\newcommand{\od}{\operatorname{\mathbf{o}}}
\newcommand{\e}{\operatorname{\mathbf{e}}}
\newcommand{\E}{\operatorname{E}}
\newcommand{\s}{\operatorname{\mathbf{s}}}
\newcommand{\Pn}{\operatorname{P}}
\newcommand{\Q}{\operatorname{Q}}
\newcommand{\K}{\operatorname{K}}
\begin{document}

\title[Lozenge tilings of a halved hexagon with an array of triangles removed]{Lozenge tilings of a halved hexagon with an array of triangles removed from the boundary}

\author{Tri Lai}
\address{Department of Mathematics, University of Nebraska -- Lincoln, Lincoln, NE 68588}
\email{tlai3@unl.edu}

\subjclass[2010]{05A15,  05B45}

\keywords{perfect matching, plane partition, lozenge tiling, dual graph,  graphical condensation.}

\date{\today}

\dedicatory{}

\begin{abstract}
Proctor's work on staircase plane partitions yields an enumeration of lozenge tilings of a halved hexagon on the triangular lattice. Rohatgi recently extended this tiling enumeration to a halved hexagon with a triangle removed from the boundary. In this paper we prove a generalization of the results of Proctor and Rohatgi by enumerating lozenge tilings of a halved hexagon in which an array of an arbitrary number of adjacent triangles has been removed from the boundary.
\end{abstract}

\maketitle
\section{Introduction}\label{Intro}
Given $k$ positive integers $\lambda_1\geq \lambda_2\geq \dots \geq \lambda_k$, a \emph{plane partition} of shape $(\lambda_1,\lambda_2,\dots,\lambda_k)$ is an array of non-negative integers
\begin{center}
\begin{tabular}{rccccccccc}
$n_{1,1}$   &$n_{1,2}$                 &$n_{1,3}$               & $\dotsc$               &  $\dotsc$                        & $\dotsc$                            &   $n_{1,\lambda_1}$ \\\noalign{\smallskip\smallskip}
$n_{2,1}$   &  $n_{2,2}$              & $n_{2,3}$             &  $\dotsc$               & $\dotsc$                        &         $n_{2,\lambda_2}$&          \\\noalign{\smallskip\smallskip}
$\vdots$    &       $\vdots$            & $\vdots$                &        $\vdots$         &     \reflectbox{$\ddots$\quad}               &    &              \\\noalign{\smallskip\smallskip}
 $n_{k,1}$  &  $n_{k,2}$               & $n_{k,3}$              &     $\dotsc$             &   $n_{k,\lambda_k}$ &                                          &           \\\noalign{\smallskip\smallskip}
\end{tabular}
\end{center}
so that $n_{i,j}\geq n_{i,j+1}$ and $n_{i,j}\geq n_{i+1,j}$ (i.e. all rows and all columns are weakly decreasing from left to right and from top to bottom, respectively). In 1980s, R. Proctor \cite{Proc} proved a simple product formula for the number of staircase plane partitions.

\begin{thm}[Proctor \cite{Proc}]
For any non-negative integers $a,b,c$ with $a\leq b$ the number of plane partitions of the staircase shape $(b, b-1, ..., b-a+1)$ with parts no larger than $c$ is equal to \begin{equation}
\prod_{i=1}^{a}\left[\prod_{j=1}^{b-a+1}\frac{c+i+j-1}{i+j-1}\prod_{j=b-a+2}^{b-a+i}\frac{2c+i+j-1}{i+j-1}\right],
\end{equation}
where empty products are taken to be 1.
\end{thm}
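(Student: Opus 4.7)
\medskip\noindent
\textbf{Proof proposal.} My plan is to reduce the enumeration of staircase plane partitions to a lozenge-tiling count on a halved hexagon, then to a non-intersecting lattice path determinant, and finally to the claimed product.

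First, I would translate a staircase plane partition of shape $(b,b-1,\dots,b-a+1)$ with entries at most $c$ into a lozenge tiling via the standard three-dimensional picture. Viewing the partition as a stack of unit cubes inscribed in the appropriate wedge-shaped box and projecting orthogonally onto the plane $x+y+z=\mathrm{const}$ yields a lozenge tiling of a trapezoidal region on the triangular lattice---precisely the halved hexagon alluded to in the abstract---whose side-lengths are determined by $a$, $b$, and $c$. This bijection is weight-preserving and turns the problem into counting tilings of a specific halved hexagon $\mathrm{H}(a,b,c)$.

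Second, I would interpret these tilings as families of $a$ non-intersecting lattice paths: in the standard way, trace a path through each of the two non-horizontal lozenge classes, obtaining paths from $a$ prescribed sources on one slanted side of $\mathrm{H}(a,b,c)$ to the cut along the straight base, where the endpoints are permitted to range freely. Applying the Lindström--Gessel--Viennot lemma together with a summation over the free endpoints expresses the number of tilings as a determinant $\det(M_{i,j})$ whose $(i,j)$-entry is a short linear combination of binomial coefficients in $c$, $i$, and $j$.

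Third, I would evaluate this determinant in closed form. Its factorization into the two products appearing in the statement---one of the form $(c+i+j-1)/(i+j-1)$ and another of the form $(2c+i+j-1)/(i+j-1)$---is the hallmark of a symplectic (or odd-orthogonal) Weyl character specialized at a principal element, so one route is to recognize $\det(M_{i,j})$ as such a character and apply the Weyl denominator formula. An alternative, more elementary, route is induction on $a$ via the Desnanot--Jacobi (graphical condensation) identity, with the one-row base case $a=1$ reducing to the familiar $\binom{c+b}{b}$.

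The main obstacle is this final determinant evaluation: the appearance of the $2c$-factors---which corresponds on the tiling side to the reflectional symmetry created by the cut---is what forces the calculation beyond a direct Vandermonde/Jacobi--Trudi type simplification. Once the determinant is evaluated, combining it with the bijection in step one and regrouping the resulting factors into the ranges $1\le j\le b-a+1$ and $b-a+2\le j\le b-a+i$ produces Proctor's formula exactly as stated.
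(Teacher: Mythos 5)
This statement is not proved in the paper at all: it is Proctor's theorem, imported verbatim from \cite{Proc} (where it is obtained from the representation theory of odd symplectic groups), so there is no internal proof to compare your argument against. Your outline is nevertheless a recognizable and reasonable strategy, and it closely parallels what the paper \emph{does} carry out for the analogous Lemma \ref{QAR2}: bijection with lozenge tilings, encoding tilings as non-intersecting lattice paths, Lindstr\"om--Gessel--Viennot, and then a closed-form determinant evaluation.

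The genuine gap is that your step three --- the determinant evaluation --- is named but not performed, and it is the entire mathematical content of the theorem. You correctly identify it as ``the main obstacle,'' but then offer only two unexecuted pointers: ``recognize $\det(M_{i,j})$ as a symplectic character and apply the Weyl denominator formula,'' or ``induct on $a$ via Desnanot--Jacobi.'' Neither route is set up concretely: you never write down the matrix entries $M_{i,j}$ (and the free-endpoint summation you invoke in step two generically produces a Pfaffian \`a la Stembridge rather than a determinant, unless you first apply a reflection argument to fold the free boundary into the entries --- a step you omit); you do not identify which character or which specialization is supposed to match your determinant; and for the condensation route you do not exhibit the four or six regions entering the recurrence, nor check that the product formula satisfies it, nor supply the second base case that a two-term recurrence in $a$ requires (the $a=1$ case alone does not suffice). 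For a model of what a completed version of your plan looks like, see the paper's proof of equality (\ref{(d)}) in Lemma \ref{QAR2}: there the matrix entries are computed explicitly as $\tfrac{1}{2}\binom{a_i+j-3}{2j-3}+\binom{a_i+j-3}{2j-2}$, massaged into the shape $(X_i-A_n-C)\dotsm(X_i+A_{j+1})$, and then evaluated by Krattenthaler's determinant identity (2.10) of \cite{Krat2} --- precisely the kind of identity whose role you leave as a black box. As written, your proposal is a plan for a proof rather than a proof.
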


Note that when $a=b$, Proctor's formula yields the number of \emph{transpose-complementary plane partitions}, one of the ten symmetry classes of plane partitions mentioned in Stanley's classical paper \cite{Stanley}.

The plane partitions in Proctor's theorem can be identified with their $3$-D interpretations --- stacks of unit cubes with certain monotonicity. The latter are in bijection with lozenge tilings of a hexagon of side-lengths $a,b,c,a,b,c$ (in the  counter clockwise order, starting from the northwestern side)  on the triangular lattice with a maximal staircase cut off, denoted by $\mathcal{P}_{a,b,c}$ (see Figure \ref{halfhex6}(a)). Here a \emph{lozenge} (or \emph{unit rhombus}) is the union of any two unit equilateral triangles sharing an edge, and a \emph{lozenge tiling} of a region on a triangular lattice is a covering of the region by lozenges so that there are no gaps or overlaps. This way, Proctor's plane partition enumeration implies the following tiling enumeration.
\begin{cor}\label{Proctiling}For any non-negative integers $a,$ $b$, and $c$ with $a\leq b$, we have
\begin{equation}
\M(\mathcal{P}_{a,b,c})=\prod_{i=1}^{a}\left[\prod_{j=1}^{b-a+1}\frac{c+i+j-1}{i+j-1}\prod_{j=b-a+2}^{b-a+i}\frac{2c+i+j-1}{i+j-1}\right],
\end{equation}
where empty products are taken to be 1. Here we use the notation $\M(\mathcal{R})$ for the number of tilings\footnote{We only consider regions on the triangular lattice in this paper. Therefore,  from now only, we use the words  ``\emph{region(s)}" and ``\emph{tiling(s)}" to mean ``\emph{ region(s) on the triangular lattice}" and ``\emph{lozenge tiling(s)}", respectively.} of the region $\mathcal{R}$.
\end{cor}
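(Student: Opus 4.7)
The plan is to derive Corollary \ref{Proctiling} directly from the theorem of Proctor stated just above it, by invoking the classical bijection between plane partitions fitting in an $a\times b\times c$ box and lozenge tilings of a hexagon with side lengths $a,b,c,a,b,c$. Since the product formula on the right-hand side is already supplied by Proctor's theorem, the work reduces to identifying $\mathcal{P}_{a,b,c}$ as the image, under this bijection, of the set of staircase plane partitions of shape $(b,b-1,\dots,b-a+1)$ with parts bounded by $c$.

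Concretely, I would first recall the three-dimensional interpretation: a plane partition $(n_{i,j})$ of shape $\lambda$ is encoded as a monotone pile of unit cubes sitting above each cell $(i,j)$ of the Young diagram of $\lambda$ at height $n_{i,j}$. Orthogonally projecting such a pile onto the plane perpendicular to $(1,1,1)$ sends it to a lozenge tiling of the silhouette of the ambient box, with the three orientation classes of lozenges corresponding to the top, left, and right visible faces of the cubes. When one restricts the ambient box to $a\times b\times c$ and the supporting shape to the staircase $(b,b-1,\dots,b-a+1)$, the silhouette becomes a hexagon of sides $a,b,c,a,b,c$ with a maximal staircase notch cut from one corner, with the bound $c$ on the parts translating into the height of the box. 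This bijection is well known and well documented, so I would cite a standard reference rather than redo the verification from scratch.

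The one point that genuinely needs checking is that the silhouette obtained in this way coincides with the region $\mathcal{P}_{a,b,c}$ as drawn in Figure \ref{halfhex6}(a). I would verify this by tracing the outer boundary edge by edge: the three sides meeting at the outward corner of the box project to the three hexagon sides of lengths $a$, $b$, and $c$, the two opposite sides of lengths $b$ and $c$ appear along the faces bounding the staircase shape, and the removed staircase of the Young diagram produces a zigzag path of $2a-1$ unit segments matching the cut in the figure. This is essentially a figure-inspection task with no real arithmetic, and it is the only step where one could conceivably slip; once it is in place, the bijection is tiling-count preserving and Proctor's theorem immediately gives the product formula asserted for $\M(\mathcal{P}_{a,b,c})$.
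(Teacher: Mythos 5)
Your proposal is correct and follows exactly the route the paper takes: the paper derives Corollary \ref{Proctiling} from Proctor's theorem by the standard identification of staircase plane partitions with bounded parts with their $3$-D cube-pile interpretations, which project to lozenge tilings of the hexagon with a maximal staircase cut off, i.e.\ the region $\mathcal{P}_{a,b,c}$. The only substantive step in either version is checking that the projected silhouette is the region in Figure \ref{halfhex6}(a), which you correctly identify as the point to verify.
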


When $a=b$, the region $\mathcal{P}_{a,b,c}$ becomes a ``\emph{halved hexagon}"  $\mathcal{P}_{a,a,c}$, and  we can view $\mathcal{P}_{a,b,c}$ as a halved hexagon with a `defect'. Tiling enumeration of halved hexagons with certain defects has been investigated by a number of authors (see e.g. \cite{Ciucu1}, \cite{Cutoff}, \cite{Ranjan}, \cite{Lai}, and the lists of references therein).

\begin{figure}
  \centering
  \includegraphics[width=10cm]{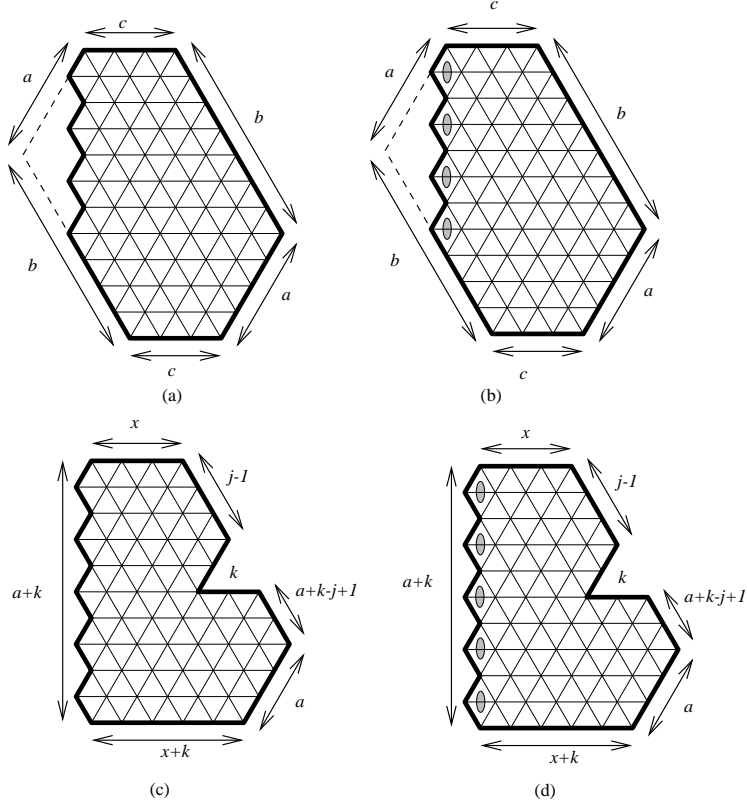}
  \caption{(a) Halved hexagon (with defect) $\mathcal{P}_{4,7,3}$. (b) The weighted halved hexagon (with defect) $\mathcal{P}'_{4,7,3}$. (c)--(d) The regions in Rohatgi's paper \cite{Ranjan}.}\label{halfhex6}
\end{figure}



Lozenges in a region can carry `weights'. In this case, we use the notation $\M(\mathcal{R})$ for the sum of weights of all tilings of $\mathcal{R}$, where the \emph{weight} of a tiling is the weight product of its constituent lozenges. We are also interested in the weighted counterpart $\mathcal{P}'_{a,b,c}$  of $\mathcal{P}_{a,b,c}$  where all the vertical lozenges along the western side are all weighted by $\frac{1}{2}$ (see the lozenges with shaded `core' in Figure \ref{halfhex6}(b)).  M. Ciucu \cite{Ciucu1} proved the following weighted counterpart of Corollary \ref{Proctiling}.

\begin{lem}\label{Ciuculem} For any non-negative integers $a,$ $b$, and $c$ with $a\leq b$
\begin{equation}
\M(\mathcal{P}'_{a,b,c})=2^{-a}\prod_{i=1}\frac{2c+b-a+i}{c+b-a+i}\prod_{i=1}^{a}\left[\prod_{j=1}^{b-a+1}\frac{c+i+j-1}{i+j-1}\prod_{j=b-a+2}^{b-a+i}\frac{2c+i+j-1}{i+j-1}\right].
\end{equation}
\end{lem}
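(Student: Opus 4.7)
My plan is to deduce Lemma 1.3 from the unweighted enumeration in Corollary 1.2 by applying Ciucu's matching factorization theorem to a suitable vertically symmetric region on the triangular lattice. The weight $\tfrac{1}{2}$ on the western vertical lozenges of $\mathcal{P}'_{a,b,c}$ is precisely the weight produced by that theorem on axis lozenges when one splits a symmetric region into its two halves, which makes such a reflection argument the natural tool here.

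The first step is to construct a symmetric region $R$ by gluing $\mathcal{P}_{a,b,c}$ to its mirror image across its straight (western) boundary. By construction, $R$ has a vertical axis of symmetry, and its right and left halves are congruent copies of $\mathcal{P}_{a,b,c}$. Ciucu's factorization theorem then yields an identity of the form
\begin{equation*}
\M(R) \;=\; 2^{a}\,\M(\mathcal{P}_{a,b,c})\,\M(\mathcal{P}'_{a,b,c}),
\end{equation*}
where the exponent matches the number of horizontal unit segments on the axis that are available to be covered by vertical axis-straddling lozenges.

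The second step is to evaluate $\M(R)$ in closed product form. After peeling off the lozenges forced by the two symmetric staircase notches at the top of $R$, the remaining region reduces to a plain hexagon whose tiling count is given by MacMahon's classical box formula. Substituting this evaluation into the factorization identity, solving for $\M(\mathcal{P}'_{a,b,c})$, and dividing by the Proctor expression in Corollary 1.2 should, after routine cancellation of Pochhammer ratios, produce the factor $2^{-a}\prod_{i=1}^{a}\frac{2c+b-a+i}{c+b-a+i}$ stated in the lemma.

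The main obstacle will be the geometric bookkeeping: identifying $R$ precisely, accounting for the lozenges forced by the two staircases on top, and verifying that the exponent of $2$ in the factorization identity is indeed $a$ rather than a shifted value (a miscount here directly destroys the $2^{-a}$ prefactor). If the direct factorization runs into technical issues because the staircase meets the symmetry axis nontrivially, a natural backup is to prove the formula by induction on $c$ using a Kuo-type condensation recurrence that relates $\M(\mathcal{P}'_{a,b,c})$ to weighted halved hexagons with smaller parameters, with base case $c=0$ (where only a unique tiling survives the forced lozenges) handled by direct inspection.
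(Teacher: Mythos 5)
The paper does not actually prove Lemma \ref{Ciuculem}; it is quoted from Ciucu \cite{Ciucu1}, so there is no internal argument to compare yours against. Your guiding idea --- that the weight-$\frac{1}{2}$ western lozenges are the trace of Ciucu's factorization theorem applied to a vertically symmetric double --- is the right provenance for these weighted regions, and it is exactly the mechanism the paper itself uses in Corollary \ref{coro1}. The problem is that your central displayed identity, $\M(R)=2^{a}\M(\mathcal{P}_{a,b,c})\M(\mathcal{P}'_{a,b,c})$, is not what the factorization theorem can deliver. When a region symmetric about a vertical axis is split, the axis passes through a column of unit triangles (alternately up- and down-pointing), and these axis cells are divided unevenly between the two halves; the halves are therefore never congruent regions, but differ by a half-column, i.e.\ by a shift of one in a side-length. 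This is visible in the paper's own Corollary \ref{coro1}, where the halves of $\mathcal{F}$ are $\mathcal{R}_{x/2,\,y-1,\,z}$ and $\mathcal{R}'_{x/2,\,y,\,z}$ --- note $y-1$ versus $y$.

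A concrete check kills the identity as written. Take $a=b=2$, $c=1$, where there is no staircase and the double of $\mathcal{P}_{2,2,1}$ across its western side is the regular hexagon of side $2$, with $\M=20$ by MacMahon. But Corollary \ref{Proctiling} gives $\M(\mathcal{P}_{2,2,1})=5$ and Lemma \ref{Ciuculem} gives $\M(\mathcal{P}'_{2,2,1})=\frac{5}{2}$, and $20/\bigl(5\cdot\frac{5}{2}\bigr)=\frac{8}{5}$ is not a power of $2$. The correct factorization here is $20=2\cdot\M(\mathcal{P}_{2,2,1})\cdot\M(\mathcal{P}'_{1,2,1})=2\cdot 5\cdot 2$: the weighted half carries shifted parameters. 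So to salvage the argument you must identify the second half correctly (a $\mathcal{P}'$-region with a decremented parameter) and extract the factor $2^{-a}\prod_i\frac{2c+b-a+i}{c+b-a+i}$ from the ratio of two Proctor products with \emph{different} parameters, not from $\M(R)/\M(\mathcal{P}_{a,b,c})^{2}$; in addition, for $a<b$ your claim that the doubled region reduces to a plain hexagon after removing forced lozenges is unsupported (the staircase notch of $\mathcal{P}_{a,b,c}$ forces nothing --- otherwise $\M(\mathcal{P}_{a,b,c})$ would itself be a MacMahon box count, which it is not). Your fallback routes (a nonintersecting-lattice-path computation as in Lemma \ref{QAR2}, or Kuo condensation) are viable, but the proof as proposed does not go through.
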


In his generalization of  W. Jockusch and J. Propp's result on \emph{quartered Aztec diamonds} \cite{JP}, the author \cite{Lai} investigated a family of regions, called \emph{quartered hexagons}, that is   obtained from a quarter of a symmetry hexagon by removing several unit triangles from the base (see the regions in Figure \ref{halfhex3b}).  We also notice that the tiling enumeration of the quartered hexagons in \cite{Lai} is equivalent to the lattice-path enumeration of the so-called \emph{stars} by C. Krattenthaler, A. J. Guttmann, and X. G. Viennot \cite{KGV}. This result implies the tiling enumeration of the following halved hexagons with triangles removed from the base.

Assume that $\textbf{a}:=(a_1,a_2,a_3,\dotsc,a_n)$ is a sequence. We define several operations on sequences as follows:
\[\E(\textbf{a})=\sum_{\text{$i$ even}} a_i,\quad \Od(\textbf{a})=\sum_{\text{$i$ odd}} a_i,\quad\text{and } \s_k(\textbf{a})=\sum_{i=1}^{k}a_i.\]

Assume that $a,b$ are nonnegative integers and $\textbf{t}=(t_1,t_2,\dots,t_{2l})$ is a sequence of non-negative integers. Consider a trapezoidal region whose northern, northeastern, and southern sides have lengths $\Od(\textbf{t}),$ $ 2\E(\textbf{t}),$ and $\E(\textbf{t})+\Od(\textbf{t})$, respectively, and whose western side follows a vertical zigzag lattice paths with $\E(\textbf{t})$ steps.  We remove the triangles of sides $t_{2i}$'s from the base of the latter region  so that the distances between two consecutive triangles are $t_{2i-1}$'s. Denote the resulting region by $\mathcal{Q}(\textbf{t})=\mathcal{Q}(t_1,t_2,\dotsc,t_{2l})$ (see the regions in Figure \ref{halfhex3c}(a)  for the case when $t_{1}>0$ and Figure \ref{halfhex3c}(b) for the  case when $t_{1}=0$).   We also consider the weighted counterpart  $\mathcal{Q}'(\textbf{t})$ of the latter region, where the vertical lozenges on the western side are weighted by $\frac{1}{2}$ (see Figure \ref{halfhex3c}(c); the vertical lozenges with shaded cores are weighted by $\frac{1}{2}$).

We are also interested in a variation of the $\mathcal{Q}$-type regions as follows. Consider the  trapezoidal region whose northern, northeastern, and southern sides have lengths $\Od(\textbf{t}), 2\E(\textbf{t})-1, \E(\textbf{t})+\Od(\textbf{t}),$ respectively, and whose western side follows the vertical zigzag lattice path with $\E(\textbf{t})-\frac{1}{2}$ steps (i.e. the western side has $\E(\textbf{t})-1$ and $\frac{1}{2}$ `bumps'). Next, we  also remove the triangles of sides $t_{2i}$'s from the base so that the distances between two consecutive ones are $t_{2i-1}$'s. Denote by $\mathcal{K}(\textbf{t})=\mathcal{K}(t_1,t_2,\dotsc,t_{2l})$ the resulting regions (see the regions in Figure \ref{halfhex3c}(d)  for the case when $t_1>0$ and Figure \ref{halfhex3c}(e) for the  case when $t_1=0$). Similar to the case of $\mathcal{Q}'$-type regions, we also define a weighted version $\mathcal{K}'(\textbf{t})$ of the $\mathcal{K}(\textbf{t})$  by assigning to each vertical lozenge on its western side a weight $\frac{1}{2}$ (see Figure \ref{halfhex3c}(f)).

From now on, we use respectively the notations $\Pn_{a,b,c}$, $\Pn'_{a,b,c}$, $\Q(\textbf{t})$, $\Q'(\textbf{t})$, $\K(\textbf{t})$, and  $\K'(\textbf{t})$ for the numbers of tilings of the regions $\mathcal{P}_{a,b,c}$, $\mathcal{P}'_{a,b,c}$, $\mathcal{Q}(\textbf{t})$, $\mathcal{Q}'(\textbf{t})$, $\mathcal{K}(\textbf{t})$, and  $\mathcal{K}'(\textbf{t})$.

\begin{figure}\centering
\setlength{\unitlength}{3947sp}%
\begingroup\makeatletter\ifx\SetFigFont\undefined%
\gdef\SetFigFont#1#2#3#4#5{%
  \reset@font\fontsize{#1}{#2pt}%
  \fontfamily{#3}\fontseries{#4}\fontshape{#5}%
  \selectfont}%
\fi\endgroup%
\resizebox{13cm}{!}{
\begin{picture}(0,0)%
\includegraphics{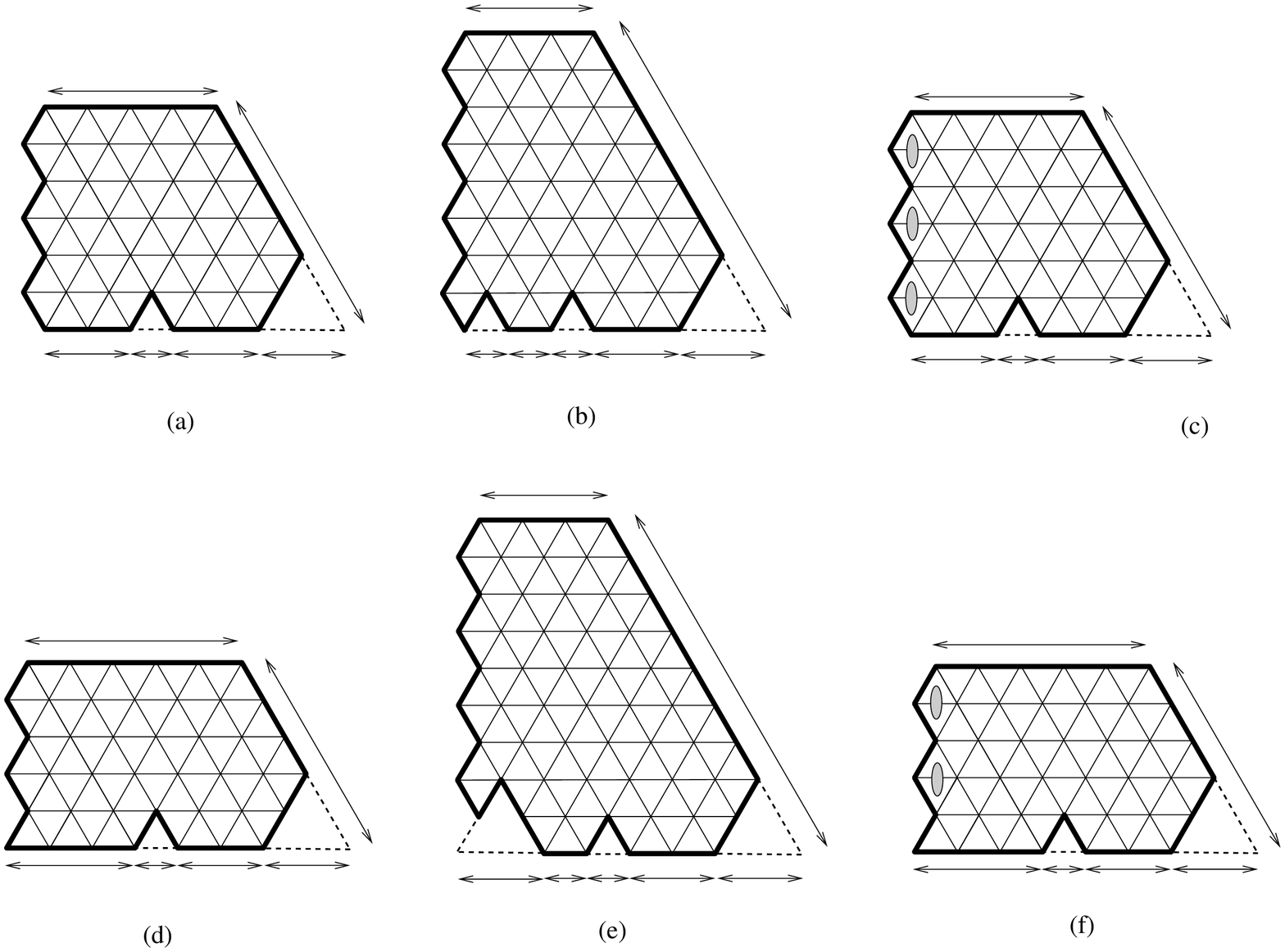}%
\end{picture}
\begin{picture}(11857,8988)(3679,-9536)
\put(4501,-1553){\makebox(0,0)[lb]{\smash{{\SetFigFont{16}{16.8}{\rmdefault}{\mddefault}{\updefault}{$t_1+t_3$}%
}}}}
\put(4148,-4338){\makebox(0,0)[lb]{\smash{{\SetFigFont{16}{16.8}{\rmdefault}{\mddefault}{\updefault}{$t_1$}%
}}}}
\put(4863,-4358){\makebox(0,0)[lb]{\smash{{\SetFigFont{16}{16.8}{\rmdefault}{\mddefault}{\updefault}{$t_2$}%
}}}}
\put(5513,-4368){\makebox(0,0)[lb]{\smash{{\SetFigFont{16}{16.8}{\rmdefault}{\mddefault}{\updefault}{$t_3$}%
}}}}
\put(8020,-4331){\makebox(0,0)[lb]{\smash{{\SetFigFont{16}{16.8}{\rmdefault}{\mddefault}{\updefault}{$t_2$}%
}}}}
\put(8387,-4331){\makebox(0,0)[lb]{\smash{{\SetFigFont{16}{16.8}{\rmdefault}{\mddefault}{\updefault}{$t_3$}%
}}}}
\put(4058,-9001){\makebox(0,0)[lb]{\smash{{\SetFigFont{16}{16.8}{\rmdefault}{\mddefault}{\updefault}{$t_1$}%
}}}}
\put(4938,-9011){\makebox(0,0)[lb]{\smash{{\SetFigFont{16}{16.8}{\rmdefault}{\mddefault}{\updefault}{$t_2$}%
}}}}
\put(5548,-9026){\makebox(0,0)[lb]{\smash{{\SetFigFont{16}{16.8}{\rmdefault}{\mddefault}{\updefault}{$t_3$}%
}}}}
\put(4583,-6566){\makebox(0,0)[lb]{\smash{{\SetFigFont{16}{16.8}{\rmdefault}{\mddefault}{\updefault}{$t_1+t_3$}%
}}}}
\put(6287,-9022){\makebox(0,0)[lb]{\smash{{\SetFigFont{16}{16.8}{\rmdefault}{\mddefault}{\updefault}{$t_4$}%
}}}}
\put(8785,-4331){\makebox(0,0)[lb]{\smash{{\SetFigFont{16}{16.8}{\rmdefault}{\mddefault}{\updefault}{$t_4$}%
}}}}
\put(9295,-4331){\makebox(0,0)[lb]{\smash{{\SetFigFont{16}{16.8}{\rmdefault}{\mddefault}{\updefault}{$t_5$}%
}}}}
\put(10045,-4331){\makebox(0,0)[lb]{\smash{{\SetFigFont{16}{16.8}{\rmdefault}{\mddefault}{\updefault}{$t_6$}%
}}}}
\put(8089,-9145){\makebox(0,0)[lb]{\smash{{\SetFigFont{16}{16.8}{\rmdefault}{\mddefault}{\updefault}{$t_2$}%
}}}}
\put(8681,-9130){\makebox(0,0)[lb]{\smash{{\SetFigFont{16}{16.8}{\rmdefault}{\mddefault}{\updefault}{$t_3$}%
}}}}
\put(9094,-9137){\makebox(0,0)[lb]{\smash{{\SetFigFont{16}{16.8}{\rmdefault}{\mddefault}{\updefault}{$t_4$}%
}}}}
\put(9611,-9130){\makebox(0,0)[lb]{\smash{{\SetFigFont{16}{16.8}{\rmdefault}{\mddefault}{\updefault}{$t_5$}%
}}}}
\put(10406,-9115){\makebox(0,0)[lb]{\smash{{\SetFigFont{16}{16.8}{\rmdefault}{\mddefault}{\updefault}{$t_6$}%
}}}}
\put(6256,-4355){\makebox(0,0)[lb]{\smash{{\SetFigFont{16}{16.8}{\rmdefault}{\mddefault}{\updefault}{$t_4$}%
}}}}
\put(6204,-2123){\rotatebox{300.0}{\makebox(0,0)[lb]{\smash{{\SetFigFont{16}{16.8}{\rmdefault}{\mddefault}{\updefault}{$2t_2+2t_4$}%
}}}}}
\put(6347,-7064){\rotatebox{300.0}{\makebox(0,0)[lb]{\smash{{\SetFigFont{16}{16.8}{\rmdefault}{\mddefault}{\updefault}{$2t_2+2t_4-1$}%
}}}}}
\put(8200,-798){\makebox(0,0)[lb]{\smash{{\SetFigFont{16}{16.8}{\rmdefault}{\mddefault}{\updefault}{$t_3+t_5$}%
}}}}
\put(8228,-5242){\makebox(0,0)[lb]{\smash{{\SetFigFont{16}{16.8}{\rmdefault}{\mddefault}{\updefault}{$t_3+t_5$}%
}}}}
\put(9685,-1481){\rotatebox{300.0}{\makebox(0,0)[lb]{\smash{{\SetFigFont{16}{16.8}{\rmdefault}{\mddefault}{\updefault}{$2t_2+2t_4+2t_6$}%
}}}}}
\put(9881,-6092){\rotatebox{300.0}{\makebox(0,0)[lb]{\smash{{\SetFigFont{16}{16.8}{\rmdefault}{\mddefault}{\updefault}{$2t_2+2t_4+2t_6-1$}%
}}}}}
\put(12392,-1604){\makebox(0,0)[lb]{\smash{{\SetFigFont{16}{16.8}{\rmdefault}{\mddefault}{\updefault}{$t_1+t_3$}%
}}}}
\put(12039,-4389){\makebox(0,0)[lb]{\smash{{\SetFigFont{16}{16.8}{\rmdefault}{\mddefault}{\updefault}{$t_1$}%
}}}}
\put(12754,-4409){\makebox(0,0)[lb]{\smash{{\SetFigFont{16}{16.8}{\rmdefault}{\mddefault}{\updefault}{$t_2$}%
}}}}
\put(13404,-4419){\makebox(0,0)[lb]{\smash{{\SetFigFont{16}{16.8}{\rmdefault}{\mddefault}{\updefault}{$t_3$}%
}}}}
\put(14147,-4406){\makebox(0,0)[lb]{\smash{{\SetFigFont{16}{16.8}{\rmdefault}{\mddefault}{\updefault}{$t_4$}%
}}}}
\put(14095,-2174){\rotatebox{300.0}{\makebox(0,0)[lb]{\smash{{\SetFigFont{16}{16.8}{\rmdefault}{\mddefault}{\updefault}{$2t_2+2t_4$}%
}}}}}
\put(12326,-9036){\makebox(0,0)[lb]{\smash{{\SetFigFont{16}{16.8}{\rmdefault}{\mddefault}{\updefault}{$t_1$}%
}}}}
\put(13206,-9046){\makebox(0,0)[lb]{\smash{{\SetFigFont{16}{16.8}{\rmdefault}{\mddefault}{\updefault}{$t_2$}%
}}}}
\put(13816,-9061){\makebox(0,0)[lb]{\smash{{\SetFigFont{16}{16.8}{\rmdefault}{\mddefault}{\updefault}{$t_3$}%
}}}}
\put(12851,-6601){\makebox(0,0)[lb]{\smash{{\SetFigFont{16}{16.8}{\rmdefault}{\mddefault}{\updefault}{$t_1+t_3$}%
}}}}
\put(14555,-9057){\makebox(0,0)[lb]{\smash{{\SetFigFont{16}{16.8}{\rmdefault}{\mddefault}{\updefault}{$t_4$}%
}}}}
\put(14615,-7099){\rotatebox{300.0}{\makebox(0,0)[lb]{\smash{{\SetFigFont{16}{16.8}{\rmdefault}{\mddefault}{\updefault}{$2t_2+2t_4-1$}%
}}}}}
\end{picture}}
\caption{ (a) The region $\mathcal{Q}(2,1,2,2)$. (b) The region $\mathcal{Q}(0,1,1,1,2,2)$.  (c) The region $\mathcal{Q}'(2,1,2,2)$. (d) The region $\mathcal{K}(3,1,2,2)$. (e) The region $\mathcal{K}(0,2,1,1,2,2)$. (f) The region $\mathcal{K}'(3,1,2,2)$. The lozenges with shaded cores are weighted by $\frac{1}{2}$.}\label{halfhex3c}
\end{figure}

\medskip

We define the  \emph{hyperfactorial} $\Hf(n)$ by
\begin{equation*}\Hf(n):=0!\cdot1!\cdot2!\dotsc(n-1)!,
\end{equation*}
 and the \emph{`skipping' hyperfactorial } $\Hf_2(n)$ by
\begin{equation*}
\Hf_2(n)=
\begin{cases}
 0!\cdot2!\cdot4!\dots(n-2)! &\text{if $n$ is even;}\\
1!\cdot2!\cdot 3!\dots(n-2)! &\text{if $n$ is odd.}
 \end{cases}
 \end{equation*}

\begin{lem}\label{QAR}
For any sequence of non-negative integers $\textbf{t}=(t_1,t_2,\dotsc,t_{2l})$
\begin{align}\label{QARa}
\Q(\textbf{t})&=\dfrac{\prod_{i=1}^{l}\frac{(\s_{2i}(\textbf{t}))!}{(\s_{2i-1}(\textbf{t}))!}}{\Hf_2(2\E(\textbf{t})+1)} \prod_{i=1}^{l}\frac{\Hf_2(2\s_{2i}(\textbf{t})+1)\Hf(2\s_{2i-1}(\textbf{t})+2)}{\Hf_2(2\s_{2i-1}(\textbf{t})+3)}\notag\\
                  &\times \displaystyle {\prod_{\substack{1\leq i< j\leq 2l\\
                  \text{$j-i$ odd}}}}\dfrac{\Hf(\s_j(\textbf{t})-\s_{i}(\textbf{t}))}{\Hf(\s_j(\textbf{t})+\s_{i}(\textbf{t})+1)}\displaystyle {\prod_{\substack{1\leq i<j\leq 2l\\
                  \text{$j-i$ even }}}}\dfrac{\Hf(\s_j(\textbf{t})+\s_{i}(\textbf{t})+1)}{\Hf(\s_j(\textbf{t})-\s_{i}(\textbf{t}))},
\end{align}
\begin{align}\label{QARb}
\Q'(\textbf{t})&=\dfrac{2^{-\E(\textbf{t})}}{\Hf_2(2\E(\textbf{t})+1)}  \prod_{i=1}^{l}\frac{\Hf_2(2\s_{2i}(\textbf{t})+1)\Hf(2\s_{2i-1}(\textbf{t}))}{\Hf_2(2\s_{2i-1}(\textbf{t})+1)}\notag\\
                  & \times \displaystyle {\prod_{\substack{1\leq i< j\leq 2l\\
                  \text{$j-i$ odd}}}}\dfrac{\Hf(\s_j(\textbf{t})-\s_{i}(\textbf{t}))}{\Hf(\s_j(\textbf{t})+\s_{i}(\textbf{t}))}\displaystyle {\prod_{\substack{1\leq i<j\leq 2l\\
                  \text{$j-i$ even }}}}\dfrac{\Hf(\s_j(\textbf{t})+\s_{i}(\textbf{t}))}{\Hf(\s_j(\textbf{t})-\s_{i}(\textbf{t}))},
\end{align}
\begin{align}\label{QARc}
\K(\textbf{t})&=\dfrac{1}{\Hf_2(2\E(\textbf{t}))}   \prod_{i=1}^{l}\frac{\Hf_2(2\s_{2i}(\textbf{t}))\Hf(2\s_{2i-1}(\textbf{t})+1)}{\Hf_2(2\s_{2i-1}(\textbf{t})+2)}\notag\\
                  &\times   \displaystyle {\prod_{\substack{1\leq i< j\leq 2l\\
                  \text{$j-i$ odd}}}}\dfrac{\Hf(\s_j(\textbf{t})-\s_{i}(\textbf{t}))}{\Hf(\s_j(\textbf{t})+\s_{i}(\textbf{t}))}\displaystyle {\prod_{\substack{1\leq i<j\leq 2l\\
                  \text{$j-i$ even }}}}\dfrac{\Hf(\s_j(\textbf{t})+\s_{i}(\textbf{t}))}{\Hf(\s_j(\textbf{t})-\s_{i}(\textbf{t}))},
\end{align}
and
\begin{align}\label{QARd}
\K'(\textbf{t})&=\dfrac{1}{\Hf_2(2\E(\textbf{t}))} \prod_{i=1}^{l}\frac{\Hf_2(2\s_{2i}(\textbf{t})-1)\Hf(2\s_{2i-1}(\textbf{t}))}{\Hf_2(2\s_{2i-1}(\textbf{t})+1)}\notag\\
                  &\times   \displaystyle {\prod_{\substack{1\leq i< j\leq 2l\\
                  \text{$j-i$ odd}}}}\dfrac{\Hf(\s_j(\textbf{t})-\s_{i}(\textbf{t}))}{\Hf(\s_j(\textbf{t})+\s_{i}(\textbf{t})-1)}\displaystyle {\prod_{\substack{1\leq i<j\leq 2l\\
                  \text{$j-i$ even }}}}\dfrac{\Hf(\s_j(\textbf{t})+\s_{i}(\textbf{t})-1)}{\Hf(\s_j(\textbf{t})-\s_{i}(\textbf{t}))}.
\end{align}

\end{lem}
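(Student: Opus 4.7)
The plan is to reduce all four identities in Lemma \ref{QAR} to the enumeration of quartered hexagons in [Lai] (equivalently, to the ``stars'' enumeration of Krattenthaler, Guttmann and Viennot [KGV]) via the standard bijection between lozenge tilings and non-intersecting lattice paths. The four regions $\mathcal Q(\textbf t)$, $\mathcal Q'(\textbf t)$, $\mathcal K(\textbf t)$, $\mathcal K'(\textbf t)$ differ from each other only in two binary choices: whether the zigzag western side has $\E(\textbf t)$ or $\E(\textbf t)-\tfrac12$ full bumps, and whether the vertical lozenges on that side carry weight $1$ or $\tfrac12$. The four right-hand sides of \eqref{QARa}--\eqref{QARd} are therefore four variations of a single product formula, with the $\pm 1$ shifts inside the hyperfactorials and the overall power of $2$ reflecting these choices.

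First I would apply the ``distinguished-orientation'' bijection, sending a tiling of $\mathcal Q(\textbf t)$ to a family of $\E(\textbf t)$ non-intersecting lattice paths. The removed triangles force the sources: a triangle of side $t_{2i}$ creates $t_{2i}$ starting points whose positions are controlled by the partial sums $\s_{2i-1}(\textbf t)$ and $\s_{2i}(\textbf t)$. The sinks sit on the zigzag western boundary in an arithmetic progression of length $\E(\textbf t)$. For $\mathcal K(\textbf t)$ the number of paths and the topmost sink shift by a half-step; for $\mathcal Q'(\textbf t)$ and $\mathcal K'(\textbf t)$ the $\tfrac12$-weighted western vertical lozenges produce an overall factor $2^{-\E(\textbf t)}$ (cf.\ Lemma \ref{Ciuculem}) and shift the endpoint coordinates by one unit. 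Applying the Lindström--Gessel--Viennot lemma then expresses each of $\Q(\textbf t)$, $\Q'(\textbf t)$, $\K(\textbf t)$, $\K'(\textbf t)$ as a determinant whose entries are binomial coefficients of the shape $\binom{\s_j(\textbf t)+\s_i(\textbf t)+c}{\s_i(\textbf t)+c'}$ with integers $c,c'\in\{-1,0,1\}$ determined by the four cases.

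At this point the computation becomes purely algebraic: each of the four determinants is a specialization of the ``star'' determinant evaluated in closed product form by [KGV] and reused in [Lai]. Substituting $x_i=\s_i(\textbf t)$, and separating the indices by parity of $i$ (since even- and odd-indexed partial sums correspond to starting points on different diagonals of the triangular lattice), rewrites the evaluation as a product in which the dichotomy ``$j-i$ odd'' versus ``$j-i$ even'' matches precisely the two double products in \eqref{QARa}--\eqref{QARd}; the single product $\prod_{i=1}^{l}$ and the prefactor $\Hf_2(2\E(\textbf t)+\epsilon)^{-1}$ collect the diagonal entries of the determinant once these are expanded using the definitions of $\Hf$ and $\Hf_2$, with $\epsilon\in\{0,1\}$ determined by the $\tfrac12$-step difference between the $\mathcal Q$-type and $\mathcal K$-type zigzags.

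The main obstacle is bookkeeping rather than conceptual. One must track the exact $\pm 1$ shifts appearing inside every hyperfactorial on the right-hand sides of \eqref{QARa}--\eqref{QARd}, which requires a careful choice of coordinates for the sources and sinks; and in the weighted cases one must verify that the power of $2$ produced by the $\tfrac12$-weighted western lozenges agrees with the normalization of the corresponding star determinant, generalizing the factor $2^{-a}$ of Lemma \ref{Ciuculem}. Once these parity and shift conventions are fixed, formulas \eqref{QARa}--\eqref{QARd} follow from the closed-form evaluation in [KGV] by routine simplification, and no new combinatorial identity is required.
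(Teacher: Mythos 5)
Your overall strategy coincides with the paper's: the four regions are reduced to quartered hexagons (your ``forced sources'' are exactly the forced lozenges the paper strips off to pass from $\mathcal Q(\textbf t)$, $\mathcal Q'(\textbf t)$, $\mathcal K(\textbf t)$, $\mathcal K'(\textbf t)$ to the regions $L$ and $\overline L$ of Lemma \ref{QAR2}), and the resulting counts are evaluated through the Lindstr\"om--Gessel--Viennot bijection and a binomial determinant. The one genuine gap is your closing claim that all four determinants are specializations of evaluations already available in \cite{KGV} and \cite{Lai}, so that ``no new combinatorial identity is required.'' That is true for \eqref{QARa}, \eqref{QARb} and \eqref{QARc}, which the paper deduces from the first three formulas of Lemma \ref{QAR2}, proved in \cite{Lai}. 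It fails for \eqref{QARd}: the relevant weighted odd-height quartered hexagon $\overline{L}_{2k-1,n}(a_1,\dotsc,a_k)$ is \emph{not} the region treated in \cite{Lai} (the paper stresses that the region $\overline{QH}_{2k-1,n}$ there is a different one), and its LGV determinant, with entries $\tfrac12\binom{a_i+j-3}{2j-3}+\binom{a_i+j-3}{2j-2}$, must be evaluated afresh. The paper does this by rewriting each entry in the form $(X_i-A_n-C)\dotsm(X_i-A_{j+1}-C)\,(X_i+A_n)\dotsm(X_i+A_{j+1})$ and invoking Krattenthaler's determinant identity (Identity (2.10) in \cite{Krat2}), which produces the factor $\prod_{i<j}(a_j-a_i)(a_i+a_j-2)$ behind \eqref{QARd}. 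Your plan would hit this determinant and find no citation to fall back on; supplying that one evaluation (routine with Krattenthaler's lemma, but a genuinely additional step) closes the gap, after which the rest of your bookkeeping goes through as described.
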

The proof of Lemma \ref{QAR} will be given later in Section \ref{Quarter}. The tiling formulas in our main theorem will be written in terms of the above enumerations $\Q(\textbf{t})$, $\Q'(\textbf{t})$, $\K(\textbf{t})$, and  $\K'(\textbf{t})$.

\medskip

Recently, R. Rohatgi \cite{Ranjan} extended Proctor's enumeration in Corollary \ref{Proctiling} and Ciucu's result in Lemma \ref{Ciuculem} by enumerating  tilings of a halved hexagon with a triangle missing on the northeastern side (see Figures \ref{halfhex6}(c) and (d)).  In this paper, we generalize  much further Rohagi's work  by investigating a halved hexagon in which  an array of an \emph{arbitrary} number of alternating up-pointing and down-pointing triangles\footnote{The latter array of triangles was called a ``\emph{fern}" in \cite{Ciucu2}.} has been removed from the northeastern side (see Figure \ref{halfhex13}). One of the nice aspects of our main result is that the number of removed triangles is \emph{arbitrary}, not just a particular number as in previous results. We will show that the number of tilings of this new region is always given by a simple product formula in the theorem below.

Let $x,y,z$ be non-negative integers, and let $\textbf{a}=(a_1,a_2,\dotsc,a_n)$ be  a sequence of positive integers ($\textbf{a}$ may be empty). We consider a halved hexagon whose the northern, northeastern, southeastern, and southern sides have  respectively lengths $x+\E(\textbf{a}), y+z+2\Od(\textbf{a})-1, y+z+\E(\textbf{a})-1, x+\Od(\textbf{a})$, and whose western side follows a vertical lattice path with  $y+z+\Od(\textbf{a})-1$ steps.  We remove at the level $z$ above the rightmost vertex  of the halved hexagon an array of $n$ alternating triangles of side-lengths $a_1,a_2,\dotsc,a_n$ (from right to left). Denote by $\mathcal{R}_{x,y,z}(\textbf{a})=\mathcal{R}_{x,y,z}(a_1,a_2,\dotsc,a_n)$ the resulting region. When the sequence $\textbf{a}$ is empty, our $\mathcal{R}$ type region becomes the halved hexagon $\mathcal{P}_{y+z-1,y+z-1,x}$; and when $n=1$, we get Rohatgi's region in \cite{Ranjan}.

We define the \emph{Pochhammer symbol} $(x)_n$ by
\begin{equation}
(x)_n=
\begin{cases}
x(x+1)(x+2)\dotsc(x+n-1) & \text{if $n>0$;}\\
\quad\quad\quad\quad\quad\;1 &\text{if $n=0$;}\\
\dfrac{1}{(x-1)(x-2)\dotsc(x+n)} &\text{if $n<0$.}
\end{cases}
\end{equation}

\begin{figure}\centering
\setlength{\unitlength}{3947sp}%
\begingroup\makeatletter\ifx\SetFigFont\undefined%
\gdef\SetFigFont#1#2#3#4#5{%
  \reset@font\fontsize{#1}{#2pt}%
  \fontfamily{#3}\fontseries{#4}\fontshape{#5}%
  \selectfont}%
\fi\endgroup%
\resizebox{12cm}{!}{
\begin{picture}(0,0)%
\includegraphics{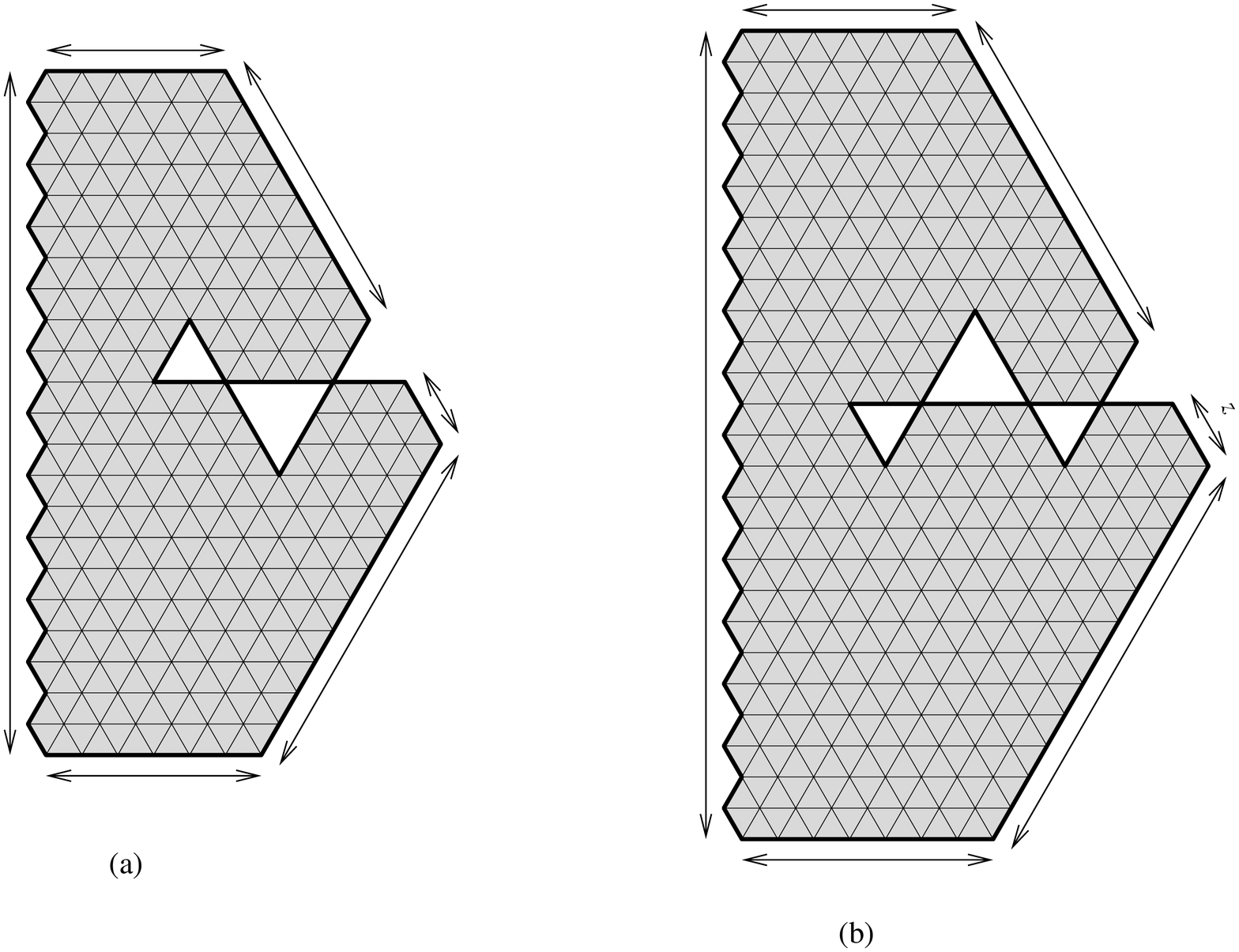}%
\end{picture}%

\begin{picture}(13959,10649)(1761,-9926)
\put(3256, 74){\makebox(0,0)[lb]{\smash{{\SetFigFont{20}{16.8}{\rmdefault}{\mddefault}{\itdefault}{$x+a_2$}%
}}}}
\put(5371,-931){\rotatebox{300.0}{\makebox(0,0)[lb]{\smash{{\SetFigFont{20}{16.8}{\rmdefault}{\mddefault}{\itdefault}{$y+a_1+2a_3-1$}%
}}}}}
\put(6091,-3541){\makebox(0,0)[lb]{\smash{{\SetFigFont{20}{16.8}{\rmdefault}{\mddefault}{\itdefault}{$a_1$}%
}}}}
\put(5100,-4126){\makebox(0,0)[lb]{\smash{{\SetFigFont{20}{16.8}{\rmdefault}{\mddefault}{\itdefault}{$a_2$}%
}}}}
\put(4000,-3601){\makebox(0,0)[lb]{\smash{{\SetFigFont{20}{16.8}{\rmdefault}{\mddefault}{\itdefault}{$a_3$}%
}}}}
\put(7111,-3751){\rotatebox{300.0}{\makebox(0,0)[lb]{\smash{{\SetFigFont{20}{16.8}{\rmdefault}{\mddefault}{\itdefault}{$z$}%
}}}}}
\put(6001,-7111){\rotatebox{60.0}{\makebox(0,0)[lb]{\smash{{\SetFigFont{20}{16.8}{\rmdefault}{\mddefault}{\itdefault}{$y+z+2a_2-1$}%
}}}}}
\put(3061,-8416){\makebox(0,0)[lb]{\smash{{\SetFigFont{20}{16.8}{\rmdefault}{\mddefault}{\itdefault}{$x+a_1+a_3$}%
}}}}
\put(1996,-5506){\rotatebox{90.0}{\makebox(0,0)[lb]{\smash{{\SetFigFont{20}{16.8}{\rmdefault}{\mddefault}{\itdefault}{$y+z+a_1+a_2+a_3-1$}%
}}}}}
\put(9601,-6286){\rotatebox{90.0}{\makebox(0,0)[lb]{\smash{{\SetFigFont{20}{16.8}{\rmdefault}{\mddefault}{\itdefault}{$y+z+a_1+a_2+a_3+a_4-1$}%
}}}}}
\put(10666,-9271){\makebox(0,0)[lb]{\smash{{\SetFigFont{20}{16.8}{\rmdefault}{\mddefault}{\itdefault}{$x+a_1+a_3$}%
}}}}
\put(13831,-8176){\rotatebox{60.0}{\makebox(0,0)[lb]{\smash{{\SetFigFont{20}{16.8}{\rmdefault}{\mddefault}{\itdefault}{$y+z+2a_2+2a_4-1$}%
}}}}}
\put(13533,-661){\rotatebox{300.0}{\makebox(0,0)[lb]{\smash{{\SetFigFont{20}{16.8}{\rmdefault}{\mddefault}{\itdefault}{$y+a_1+2a_3-1$}%
}}}}}
\put(14416,-3766){\makebox(0,0)[lb]{\smash{{\SetFigFont{20}{16.8}{\rmdefault}{\mddefault}{\itdefault}{$a_1$}%
}}}}
\put(13561,-4201){\makebox(0,0)[lb]{\smash{{\SetFigFont{20}{16.8}{\rmdefault}{\mddefault}{\itdefault}{$a_2$}%
}}}}
\put(12600,-3826){\makebox(0,0)[lb]{\smash{{\SetFigFont{20}{16.8}{\rmdefault}{\mddefault}{\itdefault}{$a_3$}%
}}}}
\put(11536,-4186){\makebox(0,0)[lb]{\smash{{\SetFigFont{20}{16.8}{\rmdefault}{\mddefault}{\itdefault}{$a_4$}%
}}}}
\put(10591,434){\makebox(0,0)[lb]{\smash{{\SetFigFont{20}{16.8}{\rmdefault}{\mddefault}{\itdefault}{$x+a_2+a_4$}%
}}}}
\end{picture}}
\caption{Halved hexagons with an array of triangles removed from the boundary: (a) $\mathcal{R}_{2,3,2}(2,3,2)$  and (b)$\mathcal{R}_{2,3,2}(2,2,3,2)$.} \label{halfhex13}
\end{figure}

\begin{thm}\label{main1} Let  $x,y,z$ be non-negative integers, and $\textbf{a}=(a_1,a_2,\dotsc,a_{n})$ a sequence of positive integers, such that $y+2\Od(\textbf{a})\geq a_1+1$.
 Then for odd $y$
\begin{align}\label{ee}
\M&(\mathcal{R}_{x,y,z}(a_1,a_2,\dotsc,a_{n}))=\prod_{i=1}^{\frac{y-1}{2}}(2x+2i)_{2\s_n(\textbf{a})+2y+2z-4i+1}\notag\\
&\frac{1}{2^{y-1}}\frac{\Hf(\s_n(\textbf{a})+y+z-1)\Hf_2(y)\Hf_2(2\E(\textbf{a})+2z+1)\Hf_2(2\Od(\textbf{a})+1)\Hf_2(2\s_n(\textbf{a})+y+2z)}
{\Hf(\s_n(\textbf{a})+z)\Hf_2(2\E(\textbf{a})+y+2z)\Hf_2(2\Od(\textbf{a})+y)\Hf_2(2\s_n(\textbf{a})+2y+2z-1)}\\
&\Q\left(x+\frac{y-1}{2}+a_{2\lfloor\frac{n+1}{2}\rfloor},a_{2\lfloor\frac{n+1}{2}\rfloor-1},\dotsc,a_1\right)\Q\left(x+\frac{y-1}{2}+a_{2\lfloor\frac{n}{2}\rfloor+1},\dotsc,a_1,z\right),\notag
\end{align}
and for even $y$
\begin{align}\label{oe}
\M&(\mathcal{R}_{x,y,z}(a_1,a_2,\dotsc,a_{n}))=\prod_{i=1}^{\frac{y}{2}}(2x+2i)_{2\s_n(\textbf{a})+2y+2z-4i+1}\notag\\
&\times\frac{\Hf(\s_n(\textbf{a})+y+z)\Hf_2(y)\Hf_2(2\E(\textbf{a})+2z)\Hf_2(2\Od(\textbf{a}))\Hf_2(2\s_n(\textbf{a})+y+2z)}{\Hf(\s_n(\textbf{a})+z)\Hf_2(2\E(\textbf{a})+y+2z)\Hf_2(2\Od(\textbf{a})+y)\Hf_2(2\s_n(\textbf{a})+2y+2z)}\\
&\K\left(x+\frac{y}{2}+a_{2\lfloor\frac{n+1}{2}\rfloor},a_{2\lfloor\frac{n+1}{2}\rfloor-1},\dotsc,a_1\right)\K\left(x+\frac{y}{2}+a_{2\lfloor\frac{n}{2}\rfloor+1},a_{2\lfloor\frac{n}{2}\rfloor},\dotsc,a_1,z\right),
\notag
\end{align}
where $a_{k}:=0$ when $k>n$, and where empty products are taken to $1$ by convention.
\end{thm}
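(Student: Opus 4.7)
The plan is to prove Theorem \ref{main1} by induction combined with Kuo's graphical condensation, a strategy that has been effective for halved-hexagon problems with decorations on the boundary. The induction will proceed on a suitable complexity measure, perhaps $x+y+z+\s_n(\textbf{a})$. The base cases are natural: when $\textbf{a}$ is empty, $\mathcal{R}_{x,y,z}$ degenerates to the halved hexagon $\mathcal{P}_{y+z-1,\,y+z-1,\,x}$, which is enumerated by Proctor's formula (Corollary \ref{Proctiling}); when $n=1$ we must recover Rohatgi's enumeration; and when some $a_i$ (or $z$) equals zero the region should reduce to one with a shorter fern, giving a smaller instance of the same theorem.

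For the inductive step, I would apply Kuo condensation to the dual graph of $\mathcal{R}_{x,y,z}(\textbf{a})$, choosing four boundary vertices---one near each of the key corners (top of the west side, northwest corner, northeast corner above the fern, and a vertex along the base)---so that removing the various pairs yields regions that, after deleting forced lozenges, are again of type $\mathcal{R}_{x',y',z'}(\textbf{a}')$ with shifted parameters. This produces a four-term recurrence of the form
\[
\M(\mathcal{R})\,\M(\mathcal{R}_{1234})=\M(\mathcal{R}_{13})\,\M(\mathcal{R}_{24})+\M(\mathcal{R}_{14})\,\M(\mathcal{R}_{23}),
\]
where $\mathcal{R}_{S}$ denotes the region obtained after deleting the vertices indexed by $S$ and stripping off the resulting forced lozenges. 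I would likely need two distinct choices of Kuo vertices---one producing a recurrence that shifts the parameters to the right ``side'' of the fern and one to the left---so that induction closes on all of $x$, $y$, $z$, and the $a_i$'s.

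The algebraic content of the proof is then to verify that the closed form in \eqref{ee}--\eqref{oe} satisfies the same recurrence. After substituting the conjectured product of a prefactor with two $\Q$-values (respectively, two $\K$-values), one cancels the common hyperfactorial and Pochhammer terms using the explicit formulas in Lemma \ref{QAR}, reducing the identity to a short polynomial identity in $x,y,z$ and the partial sums $\s_k(\textbf{a})$. Because the arguments of the two $\Q$'s (or two $\K$'s) in the main theorem are closely related---they differ only in whether $z$ is appended and in whether the leading entry uses $a_{2\lfloor(n+1)/2\rfloor}$ or $a_{2\lfloor n/2\rfloor+1}$---the Kuo recurrence should shift these two arguments in a parallel and controllable way, which is exactly what makes such a verification feasible.

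The main obstacle I expect is twofold. First, selecting the Kuo vertices so that every one of the six subregions can be identified with another $\mathcal{R}_{x',y',z'}(\textbf{a}')$ (possibly with a shortened or lengthened fern) is a delicate combinatorial task; the cases when $y$ is odd versus even will almost certainly require different choices, which matches the $\Q$ vs.\ $\K$ split in the statement. Second, and more technically, the algebraic verification requires careful bookkeeping of the prefactors involving $\Hf$, $\Hf_2$, and the Pochhammer products $(2x+2i)_{\cdot}$; ensuring that the two $\Q$ (respectively $\K$) factors emerge with exactly the arguments stated, and that the remaining ratio of hyperfactorials simplifies correctly on both sides of the Kuo identity, is where the bulk of the computation will lie.
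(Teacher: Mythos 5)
Your overall strategy is exactly the paper's: induction combined with Kuo condensation, two distinct choices of the four Kuo vertices producing two recurrences on $\mathcal{R}$-type regions, followed by an algebraic verification that the closed product formulas satisfy the same recurrences via the explicit evaluations in Lemma \ref{QAR}. Two details differ from what actually happens. First, the two Kuo setups are dictated by the parity of $n$ (the length of the fern), not by the parity of $y$: the paper uses Theorem \ref{kuothm1} when $n$ is even and the unbalanced version Theorem \ref{kuothm2} (after appending a strip of unit triangles next to the last hole) when $n$ is odd, while the $\Q$ versus $\K$ dichotomy by parity of $y$ only surfaces in the algebraic verification. Second, and more importantly, your list of base cases has a genuine gap.

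Both Kuo recurrences in the paper shift $y$ to $y-2$ and one of them shifts $x$ to $x-1$, so no matter what complexity measure you induct on, the induction cannot close without base cases at $y\in\{0,1\}$ and at $x=0$; your proposed base cases (empty $\textbf{a}$, $n=1$, some $a_i$ or $z$ equal to $0$) do not cover these. These are not trivial degenerations: for $y\in\{0,1\}$ and for $x=0$ the paper invokes the Region-Splitting Lemma \ref{RS} to cut $\mathcal{R}_{x,y,z}(\textbf{a})$ along the lattice line through the bases of the triangular holes into two quartered hexagons of type $\mathcal{Q}$ or $\mathcal{K}$, and then applies Lemma \ref{QAR}; this is precisely where the two $\Q$ (resp.\ $\K$) factors in \eqref{ee} and \eqref{oe} acquire their combinatorial meaning. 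Relatedly, you do not need Rohatgi's $n=1$ result as an input --- it falls out as a special case once the induction is set up correctly. With the base cases corrected (and noting that the $z=0$ case reduces, by forced lozenges, to a region with a shorter fern so that one may assume $z>0$ in the inductive step), your plan coincides with the paper's proof.
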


\begin{figure}\centering
\setlength{\unitlength}{3947sp}%
\begingroup\makeatletter\ifx\SetFigFont\undefined%
\gdef\SetFigFont#1#2#3#4#5{%
  \reset@font\fontsize{#1}{#2pt}%
  \fontfamily{#3}\fontseries{#4}\fontshape{#5}%
  \selectfont}%
\fi\endgroup%
\resizebox{12cm}{!}{
\begin{picture}(0,0)%
\includegraphics{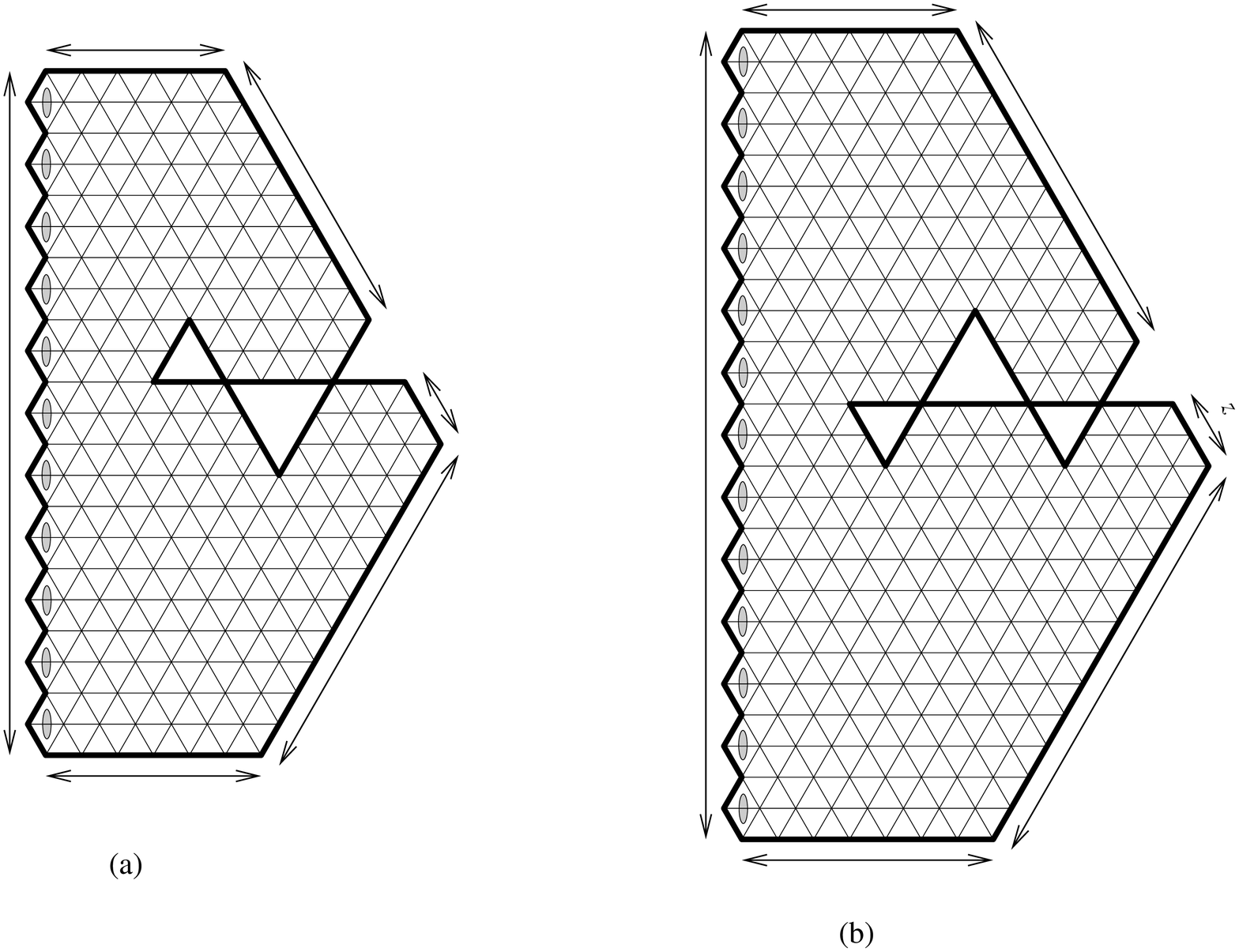}%
\end{picture}%
\begin{picture}(13963,10653)(1759,-9927)
\put(3256, 74){\makebox(0,0)[lb]{\smash{{\SetFigFont{20}{16.8}{\rmdefault}{\mddefault}{\itdefault}{$x+a_2$}%
}}}}
\put(5371,-931){\rotatebox{300.0}{\makebox(0,0)[lb]{\smash{{\SetFigFont{20}{16.8}{\rmdefault}{\mddefault}{\itdefault}{$y+a_1+2a_3-1$}%
}}}}}
\put(6091,-3541){\makebox(0,0)[lb]{\smash{{\SetFigFont{20}{16.8}{\rmdefault}{\mddefault}{\itdefault}{$a_1$}%
}}}}
\put(4921,-4126){\makebox(0,0)[lb]{\smash{{\SetFigFont{20}{16.8}{\rmdefault}{\mddefault}{\itdefault}{$a_2$}%
}}}}
\put(3991,-3601){\makebox(0,0)[lb]{\smash{{\SetFigFont{20}{16.8}{\rmdefault}{\mddefault}{\itdefault}{$a_3$}%
}}}}
\put(7111,-3751){\rotatebox{300.0}{\makebox(0,0)[lb]{\smash{{\SetFigFont{20}{16.8}{\rmdefault}{\mddefault}{\itdefault}{$z$}%
}}}}}
\put(6001,-7111){\rotatebox{60.0}{\makebox(0,0)[lb]{\smash{{\SetFigFont{20}{16.8}{\rmdefault}{\mddefault}{\itdefault}{$y+z+2a_2-1$}%
}}}}}
\put(3061,-8416){\makebox(0,0)[lb]{\smash{{\SetFigFont{20}{16.8}{\rmdefault}{\mddefault}{\itdefault}{$x+a_1+a_3$}%
}}}}
\put(1996,-5506){\rotatebox{90.0}{\makebox(0,0)[lb]{\smash{{\SetFigFont{20}{16.8}{\rmdefault}{\mddefault}{\itdefault}{$y+z+a_1+a_2+a_3-1$}%
}}}}}
\put(9601,-6286){\rotatebox{90.0}{\makebox(0,0)[lb]{\smash{{\SetFigFont{20}{16.8}{\rmdefault}{\mddefault}{\itdefault}{$y+z+a_1+a_2+a_3+a_4-1$}%
}}}}}
\put(10666,-9271){\makebox(0,0)[lb]{\smash{{\SetFigFont{20}{16.8}{\rmdefault}{\mddefault}{\itdefault}{$x+a_1+a_3$}%
}}}}
\put(13831,-8176){\rotatebox{60.0}{\makebox(0,0)[lb]{\smash{{\SetFigFont{20}{16.8}{\rmdefault}{\mddefault}{\itdefault}{$y+z+2a_2+2a_4-1$}%
}}}}}
\put(13533,-661){\rotatebox{300.0}{\makebox(0,0)[lb]{\smash{{\SetFigFont{20}{16.8}{\rmdefault}{\mddefault}{\itdefault}{$y+a_1+2a_3-1$}%
}}}}}
\put(14416,-3766){\makebox(0,0)[lb]{\smash{{\SetFigFont{20}{16.8}{\rmdefault}{\mddefault}{\itdefault}{$a_1$}%
}}}}
\put(13561,-4201){\makebox(0,0)[lb]{\smash{{\SetFigFont{20}{16.8}{\rmdefault}{\mddefault}{\itdefault}{$a_2$}%
}}}}
\put(12436,-3826){\makebox(0,0)[lb]{\smash{{\SetFigFont{20}{16.8}{\rmdefault}{\mddefault}{\itdefault}{$a_3$}%
}}}}
\put(11536,-4186){\makebox(0,0)[lb]{\smash{{\SetFigFont{20}{16.8}{\rmdefault}{\mddefault}{\itdefault}{$a_4$}%
}}}}
\put(10591,434){\makebox(0,0)[lb]{\smash{{\SetFigFont{20}{16.8}{\rmdefault}{\mddefault}{\itdefault}{$x+a_2+a_4$}%
}}}}
\end{picture}}
\caption{Weighted halved hexagons with an array of triangles removed from the boundary: (a) $\mathcal{R}'_{2,3,2}(2,3,2)$  and (b) $\mathcal{R}'_{2,3,2}(2,2,3,2)$. The lozenges with shaded cores are weighted by $\frac{1}{2}$.} \label{halfhex13b}
\end{figure}

Similar to the case of the weighted halved hexagon $\mathcal{P}'_{a,b,c}$, we  are also interested in the weighted version $\mathcal{R}'_{x,y,z}(\textbf{a})=\mathcal{R}'_{x,y,z}(a_1,a_2,\dotsc,a_n)$ of the above $\mathcal{R}$-type region where each vertical lozenge along the western side is  weighted by $\frac{1}{2}$ (see Figure \ref{halfhex13b} for examples). We also have a simple product formula for the (weighted) tiling number of $\mathcal{R}'_{x,y,z}(\textbf{a})$.
\begin{thm}\label{main2}  Let  $x,y,z$ be non-negative integers, and $\textbf{a}=(a_1,a_2,\dotsc,a_{n})$ a sequence of positive integers, such that $y+2\Od(\textbf{a})\geq a_1+1$.
Then  for odd $y$
\begin{align}\label{eeb}
\M&(\mathcal{R}'_{x,y,z}(a_1,a_2,\dotsc,a_{n}))=\prod_{i=1}^{\frac{y-1}{2}}(2x+2i-1)_{2\s_n(\textbf{a})+2y+2z-4i+1}\notag\\
&\frac{1}{2^{y-1}}\frac{\Hf(\s_n(\textbf{a})+y+z-1)\Hf_2(y)\Hf_2(2\E(\textbf{a})+2z+1)\Hf_2(2\Od(\textbf{a})+1)\Hf_2(2\s_n(\textbf{a})+y+2z)}
{\Hf(\s_n(\textbf{a})+z)\Hf_2(2\E(\textbf{a})+y+2z)\Hf_2(2\Od(\textbf{a})+y)\Hf_2(2\s_n(\textbf{a})+2y+2z-1)}\\
&\Q'\left(x+\frac{y-1}{2}+a_{2\lfloor\frac{n+1}{2}\rfloor},a_{2\lfloor\frac{n+1}{2}\rfloor-1},\dotsc,a_1\right)\Q'\left(x+\frac{y-1}{2}+a_{2\lfloor\frac{n}{2}\rfloor+1},a_{2\lfloor\frac{n}{2}\rfloor},\dotsc,a_1,z\right)\notag;
\end{align}
and for even $y$
\begin{align}\label{oob}
\M&(\mathcal{R}'_{x,y,z}(a_1,a_2,\dotsc,a_{n}))=\prod_{i=1}^{\frac{y}{2}}(2x+2i-1)_{2\s_n(\textbf{a})+2y+2z-4i+1}\notag\\
&\times\frac{\Hf(\s_n(\textbf{a})+y+z)\Hf_2(y)\Hf_2(2\E(\textbf{a})+2z)\Hf_2(2\Od(\textbf{a}))\Hf_2(2\s_n(\textbf{a})+y+2z)}
{\Hf(\s_n(\textbf{a})+z)\Hf_2(2\E(\textbf{a})+y+2z)\Hf_2(2\Od(\textbf{a})+y)\Hf_2(2\s_n(\textbf{a})+2y+2z)}\\
&\K'\left(x+\frac{y}{2}+a_{2\lfloor\frac{n+1}{2}\rfloor},a_{2\lfloor\frac{n+1}{2}\rfloor-1},\dotsc,a_1\right)\K'\left(x+\frac{y}{2}+a_{2\lfloor\frac{n}{2}\rfloor+1},a_{2\lfloor\frac{n}{2}\rfloor},\dotsc,a_1,z\right)
\notag,
\end{align}
where $a_{k}:=0$ when $k>n$, and where empty products are taken to $1$ by convention.
\end{thm}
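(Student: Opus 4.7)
The plan is to prove Theorem \ref{main2} by induction, with Kuo's graphical condensation as the main recurrence tool. Since condensation applied to a weighted region $\mathcal{R}'$ often produces both $\mathcal{R}'$- and $\mathcal{R}$-subregions, it is convenient to prove Theorems \ref{main1} and \ref{main2} together by a joint induction on a complexity measure such as $\s_n(\textbf{a}) + n + y + z$. The formulas for $\Q'$ and $\K'$ in Lemma \ref{QAR} are used only as a black box.

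For the base case, when the fern is empty ($n=0$), $\mathcal{R}'_{x,y,z}(\varnothing) = \mathcal{P}'_{y+z-1,\,y+z-1,\,x}$, whose tiling count is given by Ciucu's Lemma \ref{Ciuculem}. A direct simplification shows that the right-hand sides of (\ref{eeb}) and (\ref{oob}) at $n=0$, in which $\Q'$ and $\K'$ are specialized to short sequences depending only on $x$, $y$, and $z$, reproduce Ciucu's formula after the $\Hf_2$-ratios are collected.

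For the inductive step, Kuo condensation applied to four boundary unit triangles adapted to the fern (for instance, one near each of the NW corner, the topmost removed triangle, the NE corner, and the base) yields a bilinear identity of the form
\begin{equation*}
\M(\mathcal{R}'_{x,y,z}(\textbf{a}))\,\M(R_0) \;=\; \M(R_1)\,\M(R_2) \;+\; \M(R_3)\,\M(R_4),
\end{equation*}
in which each $R_i$ is of type $\mathcal{R}'$ or $\mathcal{R}$ with a strictly smaller induction parameter (some $a_i$ decremented, $y$ or $z$ decremented, or two consecutive triangles of the fern merged into one). By the induction hypothesis each $\M(R_i)$ equals the claimed formula, and after substitution the $\Q'$- and $\K'$-factors cancel heavily, reducing the recurrence to a pure algebraic identity among Pochhammer symbols and hyperfactorials, verifiable by direct manipulation.

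The main obstacle is the careful choice of the four Kuo vertices: different choices are needed according to the parity of $y$ (which switches the formula from $\Q'$-products to $\K'$-products) and the parity of $n$ (which controls whether $z$ is bundled with the even- or odd-indexed portion of the fern), and boundary configurations --- such as the fern touching a corner, or an internal $a_i$ shrinking to $0$ so that two adjacent triangles merge --- must each be handled separately so that every resulting subregion stays inside the $\mathcal{R}/\mathcal{R}'$ family. The accompanying hyperfactorial algebra is intricate because of the parity case-split in the definition of $\Hf_2$, but once the recurrence is set up correctly the remaining verification is routine.
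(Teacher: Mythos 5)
Your overall strategy (induction plus Kuo condensation plus an algebraic verification of the resulting bilinear recurrence) is the same as the paper's, which proves Theorem \ref{main2} exactly as Theorem \ref{main1}: by induction on $x+y+n$, using Kuo's Theorems \ref{kuothm1} and \ref{kuothm2} with four unit triangles placed near the fern on the northeastern side. Because those four triangles are far from the weighted western boundary, every subregion produced by condensation stays inside the $\mathcal{R}'$ family; no joint induction with the unweighted case is needed, and the verification lemma (Lemma \ref{numberrecurb}) is stated purely for the $\Phi'$-functions. Your worry that condensation mixes $\mathcal{R}$ and $\mathcal{R}'$ is therefore unfounded for a suitable choice of vertices, though harmless.

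The genuine gap is in the base of the induction. Declaring $n=0$ the only base case cannot work: Kuo condensation applied near the fern never decreases $n$; it decrements $y$, $z$, $x$, or perturbs $a_n$. The induction must therefore bottom out at $x=0$, at $y=1$ (odd case) or $y=0$ (even case), and at $z=0$, and each of these requires a separate, non-inductive evaluation. In the paper these are handled by the Region-Splitting Lemma \ref{RS}, which cuts the degenerate region along the line through the bases of the holes into two quartered-hexagon regions of type $\mathcal{Q}'$ or $\mathcal{K}'$, plus a forced-lozenge argument for $z=0$; this is precisely where Lemma \ref{QAR} does essential work, not merely as a ``black box'' for the final formula. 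You mention boundary configurations only in passing and give no mechanism for evaluating them. A second, smaller issue: your proposed induction parameter $\s_n(\textbf{a})+n+y+z$ does not strictly decrease in all terms of the recurrences that actually arise (e.g.\ the analogue of $\M(\mathcal{R}'_{x-1,y,z-1}(a_1,\dotsc,a_{2l}+1))$ keeps it fixed, and $\M(\mathcal{R}'_{x,y+2,z-1}(a_1,\dotsc,a_{2l+1}-1))$ as well), so without an explicit, verified choice of the four Kuo vertices the well-foundedness of your induction is unestablished; the paper's parameter $x+y+n$, combined with the separate treatment of $z$, is what makes its version close up.
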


It is easy to see that when $n=0$, our region becomes the halved hexagon $\mathcal{P}_{y+z-1,y+z-1,x}$, and when $n=1$ it becomes Rohatgi's region \cite{Ranjan}. It means that our main theorems can be considered as a common generalization of the enumeration of transpose-complementary plane partitions and Rohatgi's result.

In addition to the above `natural' reduction, the extreme case of $x= 0$ is also interesting, since it gives a reduction to a product of the tiling numbers of two quartered hexagons in \cite{Lai} (see Figure  Figure \ref{halfhex4}), i.e.,
for odd $y$
\begin{equation}
\M(\mathcal{R}_{0,y,z}(a_1,a_2,\dotsc,a_{2l}))=\M\left(\mathcal{Q}\left( 0,\frac{y-1}{2},a_{2l},\dotsc,a_1\right)\right)\M\left(\mathcal{Q}\left(0,\frac{y-1}{2}+a_{2l},a_{2l-1},\dotsc,a_1,z\right)\right),
\end{equation}
\begin{equation}
\M(\mathcal{R}_{0,y,z}(a_1,a_2,\dotsc,a_{2l+1}))=\M\left(\mathcal{Q}\left( 0,\frac{y-1}{2}+a_{2l+1},\dotsc,a_1\right)\right)\M\left(\mathcal{Q}\left(0,\frac{y-1}{2},a_{2l+1},\dotsc,a_1,z\right)\right),
\end{equation}
and for even $y$
\begin{equation}
\M(\mathcal{R}_{0,y,z}(a_1,a_2,\dotsc,a_{2l}))=\M\left(\mathcal{K}\left( 0,\frac{y}{2},a_{2l},\dotsc,a_1\right)\right)\M\left(\mathcal{K}\left(0,\frac{y}{2}+a_{2l},a_{2l-1},\dotsc,a_1,z\right)\right),
\end{equation}
\begin{equation}
\M(\mathcal{R}_{0,y,z}(a_1,a_2,\dotsc,a_{2l+1}))=\M\left(\mathcal{K}\left( 0,\frac{y}{2}+a_{2l+1},\dotsc,a_1\right)\right)\M\left(\mathcal{K}\left(0,\frac{y}{2},a_{2l+1},\dotsc,a_1,z\right)\right).
\end{equation}
The proofs of these identities will be shown in the proofs Section \ref{Proofmain}. We have similar identity for the case of $\mathcal{R}'$-type region.

\medskip

By applying a factorization theorem by Ciucu \cite{Ciucu3}, our main results also imply the number of tilings of a symmetric hexagon with two arrays of triangles removed from two sides as follows.

\begin{figure}
  \centering
  \setlength{\unitlength}{3947sp}%
\begingroup\makeatletter\ifx\SetFigFont\undefined%
\gdef\SetFigFont#1#2#3#4#5{%
  \reset@font\fontsize{#1}{#2pt}%
  \fontfamily{#3}\fontseries{#4}\fontshape{#5}%
  \selectfont}%
\fi\endgroup%
\resizebox{7cm}{!}{
\begin{picture}(0,0)%
\includegraphics{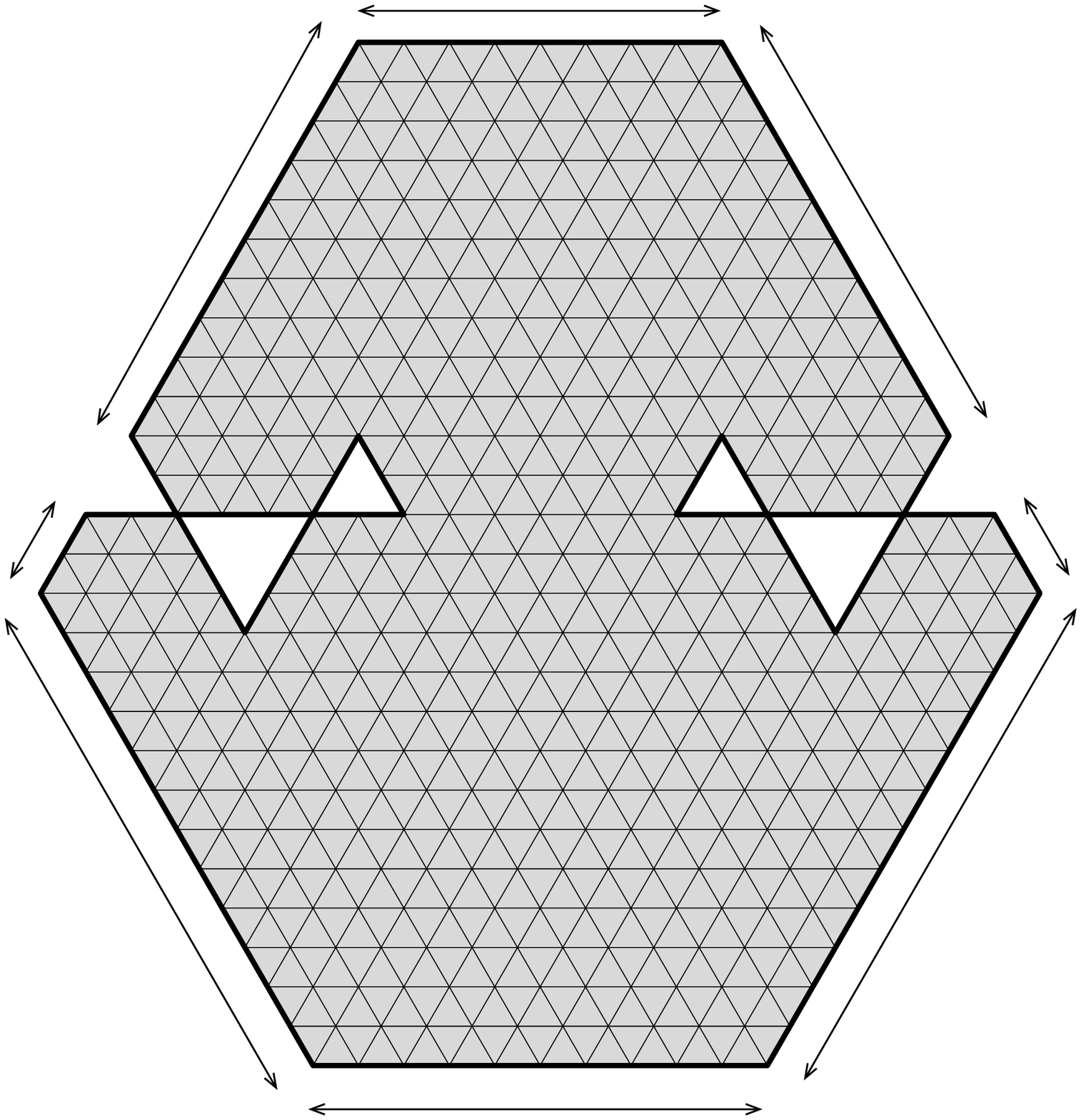}%
\end{picture}%
\begin{picture}(9771,10460)(5335,-9871)
\put(5641,-4381){\rotatebox{60.0}{\makebox(0,0)[lb]{\smash{{\SetFigFont{20}{24.0}{\rmdefault}{\mddefault}{\itdefault}{$z$}%
}}}}}
\put(6571,-2596){\rotatebox{60.0}{\makebox(0,0)[lb]{\smash{{\SetFigFont{20}{24.0}{\rmdefault}{\mddefault}{\itdefault}{$y+a_1+2a_3-1$}%
}}}}}
\put(9541,239){\makebox(0,0)[lb]{\smash{{\SetFigFont{20}{24.0}{\rmdefault}{\mddefault}{\itdefault}{$x+2a_2$}%
}}}}
\put(13456,-8116){\rotatebox{60.0}{\makebox(0,0)[lb]{\smash{{\SetFigFont{20}{24.0}{\rmdefault}{\mddefault}{\itdefault}{$y+z+2a_2-1$}%
}}}}}
\put(14701,-4231){\rotatebox{300.0}{\makebox(0,0)[lb]{\smash{{\SetFigFont{20}{24.0}{\rmdefault}{\mddefault}{\itdefault}{$z$}%
}}}}}
\put(12611,-601){\rotatebox{300.0}{\makebox(0,0)[lb]{\smash{{\SetFigFont{20}{24.0}{\rmdefault}{\mddefault}{\itdefault}{$y+a_1+2a_3-1$}%
}}}}}
\put(9046,-9856){\makebox(0,0)[lb]{\smash{{\SetFigFont{20}{24.0}{\rmdefault}{\mddefault}{\itdefault}{$x+2a_1+2a_3$}%
}}}}
\put(6219,-4156){\makebox(0,0)[lb]{\smash{{\SetFigFont{20}{24.0}{\rmdefault}{\mddefault}{\itdefault}{$a_1$}%
}}}}
\put(7329,-4666){\makebox(0,0)[lb]{\smash{{\SetFigFont{20}{24.0}{\rmdefault}{\mddefault}{\itdefault}{$a_2$}%
}}}}
\put(8401,-4171){\makebox(0,0)[lb]{\smash{{\SetFigFont{20}{24.0}{\rmdefault}{\mddefault}{\itdefault}{$a_3$}%
}}}}
\put(11491,-4186){\makebox(0,0)[lb]{\smash{{\SetFigFont{20}{24.0}{\rmdefault}{\mddefault}{\itdefault}{$a_3$}%
}}}}
\put(12451,-4651){\makebox(0,0)[lb]{\smash{{\SetFigFont{20}{24.0}{\rmdefault}{\mddefault}{\itdefault}{$a_2$}%
}}}}
\put(13591,-4156){\makebox(0,0)[lb]{\smash{{\SetFigFont{20}{24.0}{\rmdefault}{\mddefault}{\itdefault}{$a_1$}%
}}}}
\put(5591,-6481){\rotatebox{300.0}{\makebox(0,0)[lb]{\smash{{\SetFigFont{20}{24.0}{\rmdefault}{\mddefault}{\itdefault}{$y+z+2a_2-1$}%
}}}}}
\end{picture}}
  \caption{A symmetric hexagon with two arrays of triangles removed.}\label{halfhex14}
\end{figure}

Assume that $x,y,z$ are nonnegative integers, and $\textbf{a}=(a_1,a_2,\dotsc,a_n)$ is a sequence of positive integers. Consider a symmetric hexagon of side-lengths $y+z+2\Od(\textbf{a})-1,x+2\E(\textbf{a}), y+z+\Od(\textbf{a})-1, y+z+2\E(\textbf{a})-1,  x+2\E(\textbf{a}),  y+z+2\Od(\textbf{a})-1$ (in the clockwise order, starting from the northwestern side). We remove two arrays consisting of adjacent triangles of sides $a_1,a_2,\dotsc,a_n$ at the level $z$ above the western vertex of the hexagon; one array goes from left to right and another one goes from right to left. Denote by $\mathcal{F}=\mathcal{F}_{x,y,z}(a_1,a_2,\dotsc,a_n)$ the resulting region (see Figure \ref{halfhex14}).  Applying Ciucu's Factorization Theorem (Theorem 1.2 in \cite{Ciucu3}) to the  region $\mathcal{F}$, we split $\mathcal{F}$ into two subregions $\mathcal{F}^+$ and $\mathcal{F}^-$  along its vertical symmetry axis (see Figure \ref{halfhex15}(a)  for the case when $x$ is even, and Figure \ref{halfhex15}(b) for the case when $x$ is odd; the vertical lozenges with shaded cores are weighted by $\frac{1}{2}$). After removing forced lozenges, the subregions $\mathcal{F}^+$ and $\mathcal{F}^-$ become a $\mathcal{R}$- and a $\mathcal{R}'$-type regions, respectively, and we have:

\begin{figure}
  \centering
  \setlength{\unitlength}{3947sp}%
\begingroup\makeatletter\ifx\SetFigFont\undefined%
\gdef\SetFigFont#1#2#3#4#5{%
  \reset@font\fontsize{#1}{#2pt}%
  \fontfamily{#3}\fontseries{#4}\fontshape{#5}%
  \selectfont}%
\fi\endgroup%
\resizebox{12cm}{!}{
\begin{picture}(0,0)%
\includegraphics{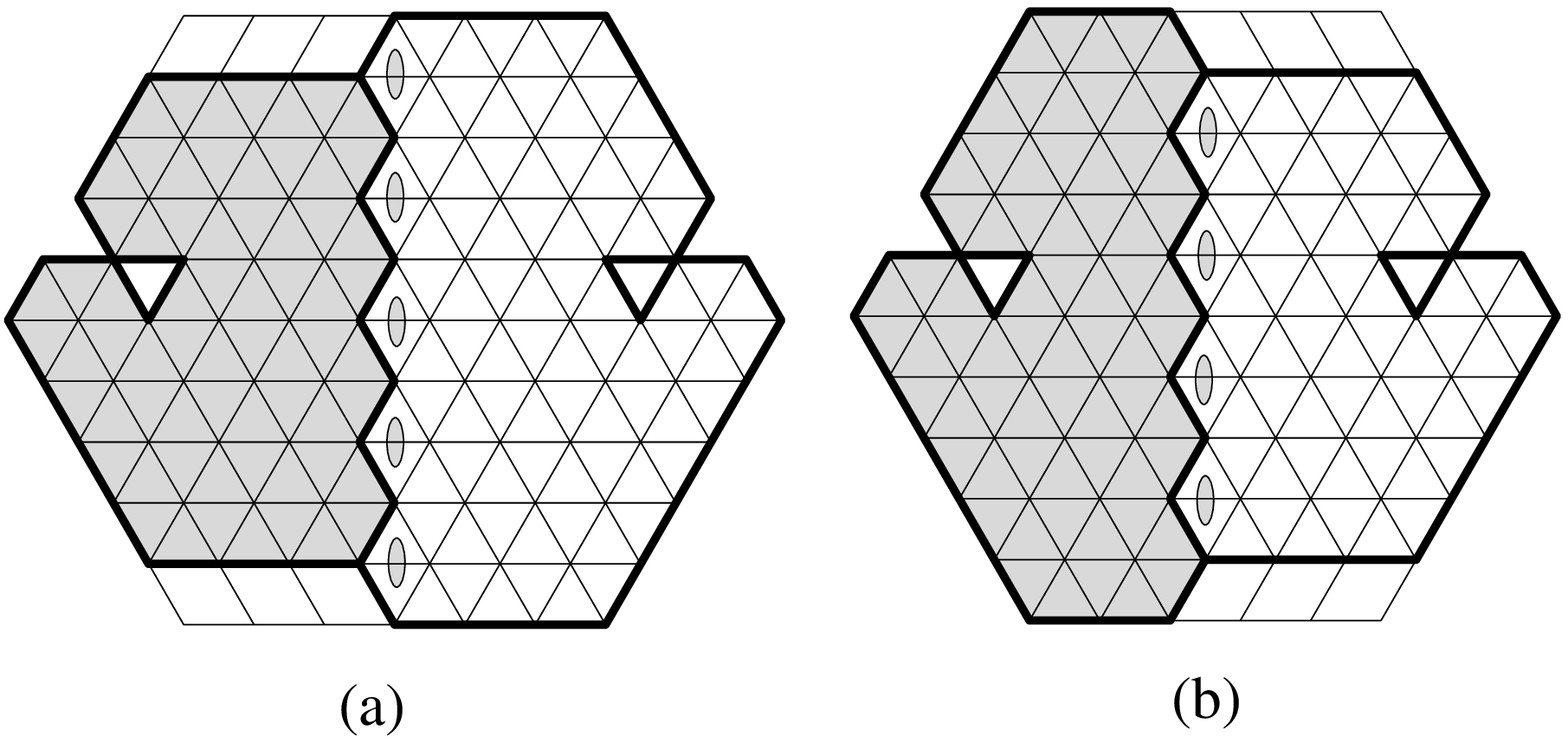}%
\end{picture}%
%
%

\begin{picture}(8674,4113)(1308,-7055)
\put(7059,-4838){\makebox(0,0)[lb]{\smash{{\SetFigFont{20}{24.0}{\rmdefault}{\mddefault}{\itdefault}{$\mathcal{F}^+$}%
}}}}
\put(8285,-4786){\makebox(0,0)[lb]{\smash{{\SetFigFont{20}{24.0}{\rmdefault}{\mddefault}{\itdefault}{$\mathcal{F}^-$}%
}}}}
\put(3971,-4674){\makebox(0,0)[lb]{\smash{{\SetFigFont{20}{24.0}{\rmdefault}{\mddefault}{\itdefault}{$\mathcal{F}^-$}%
}}}}
\put(2493,-4689){\makebox(0,0)[lb]{\smash{{\SetFigFont{20}{24.0}{\rmdefault}{\mddefault}{\itdefault}{$\mathcal{F}^+$}%
}}}}
\end{picture}}
  \caption{Applying Ciucu's Factorization Theorem to a symmetric hexagon with two arrays of triangles removed.}\label{halfhex15}
\end{figure}

\begin{cor}\label{coro1} For non-negative integers $x,y,z$ and a sequence of positive integers $\textbf{a}=(a_1,a_2,\dotsc,a_n)$
\begin{align}
\M(\mathcal{F}_{x,y,z}&(a_1,a_2,\dotsc,a_n))=2^{y+z+a_1+a_2+\dotsc+a_n-1}\notag\\
&\times\begin{cases} \M(\mathcal{R}_{\frac{x}{2},y-1,z}(a_1,a_2,\dotsc,a_n))\M(\mathcal{R}'_{\frac{x}{2},y,z}(a_1,a_2,\dotsc,a_n)) & \text{if $x$ is even};\\
 \M(\mathcal{R}_{\frac{x-1}{2},y,z}(a_1,a_2,\dotsc,a_n))\M(\mathcal{R}'_{\frac{x+1}{2},y-1,z}(a_1,a_2,\dotsc,a_n)) & \text{if $x$ is odd}.\end{cases}\end{align}
\end{cor}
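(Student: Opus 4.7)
The plan is to apply Ciucu's Factorization Theorem (Theorem 1.2 of \cite{Ciucu3}) to the hexagonal region $\mathcal{F}_{x,y,z}(\mathbf{a})$, which is vertically symmetric by construction: the reflection across the vertical line through the midpoints of its top and bottom sides exchanges the two arrays of adjacent triangles. Ciucu's theorem splits such a symmetric region along its axis and gives
\begin{equation*}
\M(\mathcal{F}) = 2^{w}\, \M(\mathcal{F}^+)\, \M(\mathcal{F}^-),
\end{equation*}
where $\mathcal{F}^+$ denotes the left half with its inherited (unit) weights, $\mathcal{F}^-$ denotes the right half endowed with weight $\tfrac12$ on each vertical lozenge adjacent to the cut, and $w$ is the exponent determined by the geometry of the axis. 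A direct count of the vertical span of the cut yields $w=y+z+\s_n(\mathbf{a})-1$, which matches the prefactor $2^{y+z+a_1+\dots+a_n-1}$ appearing in the statement.

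The next step is to identify $\mathcal{F}^+$ and $\mathcal{F}^-$ with $\mathcal{R}$- and $\mathcal{R}'$-type regions; here the parity of $x$ determines how the axis sits relative to the triangular lattice. When $x$ is even, the axis bisects the top and bottom sides along lattice lines, so both halves have northern side of length $x/2+\E(\mathbf{a})$, with the upper array of triangles contained in $\mathcal{F}^+$ and the reflected lower array in $\mathcal{F}^-$. After removing the few lozenges near the cut that are pinned by the horizontal edges of the zig-zag western boundaries, careful matching with the definitions of Section \ref{Intro} gives $\mathcal{F}^+=\mathcal{R}_{x/2,\,y-1,\,z}(\mathbf{a})$ and, because the Ciucu weights on $\mathcal{F}^-$ coincide with the $\tfrac12$ weights defining $\mathcal{R}'$, $\mathcal{F}^-=\mathcal{R}'_{x/2,\,y,\,z}(\mathbf{a})$. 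The discrepancy between the $y$-parameters of the two halves reflects the fact that one half inherits one more half-step of the zig-zag western boundary than the other.

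When $x$ is odd, the axis instead runs through midpoints of unit lattice edges on the horizontal sides, so a narrow strip of lozenges is forced along the cut. Removing these forced lozenges (all of weight $1$) identifies $\mathcal{F}^+=\mathcal{R}_{(x-1)/2,\,y,\,z}(\mathbf{a})$ and $\mathcal{F}^-=\mathcal{R}'_{(x+1)/2,\,y-1,\,z}(\mathbf{a})$. Substituting the two identifications into Ciucu's product formula then yields the two cases of the corollary.

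The main technical obstacle is purely combinatorial bookkeeping: one must verify that after reflecting $\mathcal{F}^-$ to place its western side on the left, the $\tfrac12$ weights introduced by Ciucu's theorem along what was the cut become exactly the $\tfrac12$ weights attached to the vertical lozenges along the western side in the definition of $\mathcal{R}'$, and that the parity-dependent parameter shifts $y\leftrightarrow y-1$ come out consistently on both halves. Once these identifications are confirmed from the figures, the proof is complete; no enumerative input beyond Ciucu's factorization is needed, since the explicit evaluations of $\M(\mathcal{R})$ and $\M(\mathcal{R}')$ are furnished by Theorems \ref{main1} and \ref{main2}.
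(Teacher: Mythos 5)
Your proposal follows exactly the paper's argument: the paper likewise obtains Corollary \ref{coro1} by applying Ciucu's Factorization Theorem (Theorem 1.2 of \cite{Ciucu3}) along the vertical symmetry axis of $\mathcal{F}_{x,y,z}(\textbf{a})$ and then identifying $\mathcal{F}^+$ and $\mathcal{F}^-$, after removal of forced lozenges, with an $\mathcal{R}$- and an $\mathcal{R}'$-type region whose parameters shift according to the parity of $x$ (Figure \ref{halfhex15}). Your write-up is in fact slightly more explicit than the paper's, which leaves the parity bookkeeping and the matching of the cut-edge weights with the western-side weights of $\mathcal{R}'$ entirely to the figures.
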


\medskip

In \cite{Ranjan}, Rohatgi revealed the following beautiful  identities for the tiling numbers of $\mathcal{R}$- and $\mathcal{R}'$-type regions with a single triangle removed (i.e. when $n=1$):
\begin{equation}\label{Ranjan1}
\M(\mathcal{R}_{x,y,z}(a_1))=\Pn_{y+a_1-1,y+a_1-1,x} \Pn_{z,y+z-1,a_1}\frac{\Pn_{y+z-1,y+z-1,x+a_1}}{\Pn_{y+z-1,y+z-1,a_1}}\frac{\Pn_{y-1,y-1,a_1}}{\Pn_{y-1,y-1,x+a_1}},
\end{equation}
and
\begin{equation}\label{Ranjan2}
\M(\mathcal{R}'_{x,y,z}(a_1))=\Pn'_{y+a_1-1,y+a_1-1,x} \Pn'_{z,y+z-1,a_1}\frac{\Pn'_{y+z-1,y+z-1,x+a_1}}{\Pn'_{y+z-1,y+z-1,a_1}}\frac{\Pn'_{y-1,y-1,a_1}}{\Pn'_{y-1,y-1,x+a_1}}
\end{equation}
(see Theorems 4.1 and 4.2 and Lemma 4.4 in \cite{Ranjan}).

We also get the following similar identities for any $\mathcal{R}$-type regions from Theorem \ref{main1} and Lemma \ref{QAR}.
\begin{cor}\label{coro2} Assume that $x,y,z$ are non-negative  integers , and that $\textbf{a}=(a_1,a_2,\dotsc,a_n)$ is a sequence of positive integers. Then
\begin{align}
\M(\mathcal{R}_{x,y,z}&(a_1,a_2,\dotsc,a_{n}))=\M(\mathcal{R}_{x,y,a_{1}}(a_2,\dotsc,a_{n}))
\frac{\Q(0,\frac{y-1}{2}+a_{2\lfloor\frac{n+1}{2}\rfloor},a_{2\lfloor\frac{n+1}{2}\rfloor-1},\dotsc,a_1,z)}
{\Q(0,\frac{y-1}{2}+a_{2\lfloor\frac{n+1}{2}\rfloor},a_{2\lfloor\frac{n+1}{2}\rfloor-1},\dotsc,a_2)}\notag\\
&\times\frac{\Q(x+a_{2\lfloor\frac{n}{2}\rfloor+1},a_{2\lfloor\frac{n}{2}\rfloor},\dotsc,a_{1},y+z-1)}
{\Q(a_{2\lfloor\frac{n}{2}\rfloor+1},\dotsc,a_{1},y+z-1)}
\frac{\Q(a_{2\lfloor\frac{n}{2}\rfloor+1},\dotsc,a_{1},y-1)}{\Q(x+a_{2\lfloor\frac{n}{2}\rfloor+1},\dotsc,a_{1},y-1)}
\end{align}
if $y$ is odd,  and for even $y$
\begin{align}
\M(\mathcal{R}_{x,y,z}&(a_1,a_2,\dotsc,a_{n}))=\M(\mathcal{R}_{x,y,a_{1}}(a_2,\dotsc,a_{n}))
\frac{\K(0,\frac{y}{2}+a_{2\lfloor\frac{n+1}{2}\rfloor},a_{2\lfloor\frac{n+1}{2}\rfloor-1},\dotsc,a_1,z)}
{\K(0,\frac{y}{2}+a_{2\lfloor\frac{n+1}{2}\rfloor},a_{2\lfloor\frac{n+1}{2}\rfloor-1},\dotsc,a_3,a_2)}\notag\\
&\times\frac{\K(x+a_{2\lfloor\frac{n}{2}\rfloor+1},a_{2\lfloor\frac{n}{2}\rfloor},\dotsc,a_{1},y+z)}
{\K(a_{2\lfloor\frac{n}{2}\rfloor+1},\dotsc,a_{1},y+z)}\frac{\K(a_{2\lfloor\frac{n}{2}\rfloor+1},\dotsc,a_{1},y)}
{\K(x+a_{2\lfloor\frac{n}{2}\rfloor+1},a_{2\lfloor\frac{n}{2}\rfloor},\dotsc,a_{1},y)},
\end{align}
where $a_k:=0$ when $k>n$ by convention.
\end{cor}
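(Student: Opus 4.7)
The plan is to obtain Corollary \ref{coro2} by direct computation using Theorem \ref{main1}. Both $\M(\mathcal{R}_{x,y,z}(a_1,\dotsc,a_n))$ and $\M(\mathcal{R}_{x,y,a_1}(a_2,\dotsc,a_n))$ are given by explicit product formulas in Theorem \ref{main1}, so dividing the two expressions and checking that the quotient equals the prescribed product of $\Q$-ratios (odd $y$) or $\K$-ratios (even $y$) reduces the corollary to a closed-form identity among hyperfactorials and Pochhammer symbols.

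For the odd-$y$ case, I would apply (\ref{ee}) first to $\mathcal{R}_{x,y,z}(a_1,\dotsc,a_n)$ and then to $\mathcal{R}_{x,y,a_1}(a_2,\dotsc,a_n)$. Setting $\textbf{b}=(a_2,\dotsc,a_n)$, the relevant substitutions are
\[
\Od(\textbf{b})=\E(\textbf{a}),\qquad \E(\textbf{b})=\Od(\textbf{a})-a_1,\qquad \s_{n-1}(\textbf{b})=\s_n(\textbf{a})-a_1,
\]
with the parameter $z$ replaced by $a_1$ in the second evaluation. A short check shows that the Pochhammer prefactor $\prod_{i=1}^{(y-1)/2}(2x+2i)_{\cdots}$ coincides in the two evaluations, since $\s_n(\textbf{a})+z = \s_{n-1}(\textbf{b})+a_1$, so that block contributes trivially to the ratio. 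The remaining skipping-hyperfactorial prefactor and the two $\Q$-factors differ only in the way $z$ and $\s_n(\textbf{a})$ enter; these differences are then reassembled, via Lemma \ref{QAR}, into exactly the three $\Q$-ratios on the right-hand side of the corollary.

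The main obstacle is the parity-dependent indexing inherited from Theorem \ref{main1}. In (\ref{ee}), one $\Q$-factor carries the argument $z$ while the other carries the augmented top entry $a_{2\lfloor(n+1)/2\rfloor}$, and which is which depends on the parity of $n$; passing from $\textbf{a}$ to $\textbf{b}$ shifts these parities by one, so the two $\Q$-factors in the $\textbf{a}$-expansion do not pair off one-to-one with those in the $\textbf{b}$-expansion. I would therefore split the verification into the cases $n$ even and $n$ odd, and in each case show that the three displayed $\Q$-ratios in Corollary \ref{coro2} absorb exactly the resulting mismatch: the first $\Q$-ratio records the swap between the "$z$-carrying" $\Q$ and the "$a_2$-carrying" $\Q$, while the remaining two ratios record the $x$-dependence that was not already accounted for by the bulk $\Q$-factors. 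The even-$y$ case is handled identically, replacing (\ref{ee}) by (\ref{oe}), $\Q$ by $\K$, and (\ref{QARa}) by (\ref{QARc}), with no new ideas required.
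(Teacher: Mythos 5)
Your overall strategy --- evaluate both $\M(\mathcal{R}_{x,y,z}(\textbf{a}))$ and $\M(\mathcal{R}_{x,y,a_{1}}(a_2,\dotsc,a_n))$ via Theorem \ref{main1}, divide, and match the quotient against the displayed $\Q$- (resp.\ $\K$-) ratios using Lemma \ref{QAR} --- is exactly the route the paper takes; the paper gives no more detail than the sentence that the corollary follows ``from Theorem \ref{main1} and Lemma \ref{QAR}.'' Your substitutions $\Od(\textbf{b})=\E(\textbf{a})$, $\E(\textbf{b})=\Od(\textbf{a})-a_1$, $\s_{n-1}(\textbf{b})=\s_n(\textbf{a})-a_1$ for $\textbf{b}=(a_2,\dotsc,a_n)$ are correct, and your warning about the parity shift in the indices $\lfloor\frac{n+1}{2}\rfloor$, $\lfloor\frac{n}{2}\rfloor$ is well taken.

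However, the ``short check'' asserting that the Pochhammer prefactors coincide is false, and the identity you cite for it, $\s_n(\textbf{a})+z=\s_{n-1}(\textbf{b})+a_1$, does not hold: in fact $\s_{n-1}(\textbf{b})+a_1=\s_n(\textbf{a})$, which differs from $\s_n(\textbf{a})+z$ by $z$. Concretely, for $\mathcal{R}_{x,y,z}(\textbf{a})$ the $i$-th Pochhammer symbol has length $2\s_n(\textbf{a})+2y+2z-4i+1$, while for $\mathcal{R}_{x,y,a_1}(\textbf{b})$ it has length $2\s_{n-1}(\textbf{b})+2y+2a_1-4i+1=2\s_n(\textbf{a})+2y-4i+1$; the two lengths differ by $2z$, so the ratio of the prefactor blocks is
\begin{equation*}
\prod_{i=1}^{\lfloor y/2\rfloor}\bigl(2x+2\s_n(\textbf{a})+2y-2i+1\bigr)_{2z},
\end{equation*}
a nontrivial, $x$-dependent product whenever $z>0$. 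This is not a cosmetic slip: the only places on the right-hand side of the corollary where $x$ enters beyond $\M(\mathcal{R}_{x,y,a_1}(a_2,\dotsc,a_n))$ are the last two $\Q$-ratios, so this leftover Pochhammer product is precisely part of what those ratios must absorb (together with the quotient of the $\Q$-factors from the two applications of Theorem \ref{main1}). Dropping it, as your accounting does, would leave the two sides unequal. The fix is mechanical --- carry the factor above through and check, via the quotient formulas implicit in Lemma \ref{QAR}, that it combines with the $\Q$-factor quotients to give exactly the three displayed ratios --- but as written the verification would fail at this step.
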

Similarly, we have the following recurrences on the number of tilings of $\mathcal{R}'$-type regions from Theorem \ref{main2} and Lemma \ref{QAR}.
\begin{cor}\label{coro3} Assume that $x,y,z$ are non-negative integers and that $\textbf{a}=(a_1,a_2,\dotsc,a_n)$ is a sequence of positive integers. Then
\begin{align}
\M(\mathcal{R}'_{x,y,z}&(a_1,a_2,\dotsc,a_{n}))=\M(\mathcal{R}'_{x,y,a_{1}}(a_2,\dotsc,a_{n}))
\frac{\Q'(0,\frac{y-1}{2}+a_{2\lfloor\frac{n+1}{2}\rfloor},a_{2\lfloor\frac{n+1}{2}\rfloor-1},\dotsc,a_1,z)}
{\Q'(0,\frac{y-1}{2}+a_{2\lfloor\frac{n+1}{2}\rfloor},a_{2\lfloor\frac{n+1}{2}\rfloor-1},\dotsc,a_2)}\notag\\
&\times\frac{\Q'(x+a_{2\lfloor\frac{n}{2}\rfloor+1},a_{2\lfloor\frac{n}{2}\rfloor},\dotsc,a_{1},y+z-1)}
{\Q'(a_{2\lfloor\frac{n}{2}\rfloor+1},\dotsc,a_{1},y+z-1)}
\frac{\Q'(a_{2\lfloor\frac{n}{2}\rfloor+1},\dotsc,a_{1},y-1)}{\Q'(x+a_{2\lfloor\frac{n}{2}\rfloor+1},\dotsc,a_{1},y-1)}
\end{align}
if $y$ is odd,  and for even $y$
\begin{align}
\M(\mathcal{R}'_{x,y,z}&(a_1,a_2,\dotsc,a_{n}))=\M(\mathcal{R}'_{x,y,a_{1}}(a_2,\dotsc,a_{n}))\frac{\K(0,\frac{y}{2}+a_{2\lfloor\frac{n+1}{2}\rfloor},a_{2\lfloor\frac{n+1}{2}\rfloor-1},\dotsc,a_1,z)}
{\K'(0,\frac{y}{2}+a_{2\lfloor\frac{n+1}{2}\rfloor},a_{2\lfloor\frac{n+1}{2}\rfloor-1},\dotsc,a_3,a_2)}\notag\\
&\times\frac{\K(x+a_{2\lfloor\frac{n}{2}\rfloor+1},a_{2\lfloor\frac{n}{2}\rfloor},\dotsc,a_{1},y+z)}
{\K'(a_{2\lfloor\frac{n}{2}\rfloor+1},\dotsc,a_{1},y+z)}\frac{\K'(a_{2\lfloor\frac{n}{2}\rfloor+1},\dotsc,a_{1},y)}
{\K'(x+a_{2\lfloor\frac{n}{2}\rfloor+1},a_{2\lfloor\frac{n}{2}\rfloor},\dotsc,a_{1},y)},
\end{align}
where $a_k:=0$ when $k>n$  by convention.
\end{cor}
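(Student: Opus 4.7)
The plan is to apply Theorem~\ref{main2} twice---once to the left-hand side $\M(\mathcal{R}'_{x,y,z}(a_1,\dotsc,a_n))$ and once to the ``inner'' region $\M(\mathcal{R}'_{x,y,a_1}(a_2,\dotsc,a_n))$---and to verify that the ratio of the two closed-form expressions rearranges into the stated product of four quotients of $\Q'$-values (for odd $y$) or of $\K$/$\K'$-values (for even $y$) via Lemma~\ref{QAR}. This is precisely the scheme that produces Corollary~\ref{coro2} for the unweighted regions: granting Theorem~\ref{main2}, everything reduces to a formal, if bulky, identity between products of hyperfactorials and Pochhammer symbols.

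In detail, set $\textbf{a}':=(a_2,\dotsc,a_n)$; note that $\E(\textbf{a}')=\Od(\textbf{a})-a_1$ and $\Od(\textbf{a}')=\E(\textbf{a})$, and $\s_k(\textbf{a}')=\s_{k+1}(\textbf{a})-a_1$. The formula \eqref{eeb} (resp.~\eqref{oob}) for $\M(\mathcal{R}'_{x,y,z}(\textbf{a}))$ splits as a universal hyperfactorial prefactor, a Pochhammer product $\prod_i(2x+2i-1)_\bullet$, and the two $\Q'$- or $\K'$-values whose last entries are $a_1$ and $(a_1,z)$ respectively. Replacing $z$ by $a_1$ in the second of these converts it into an evaluation of the same family, while leaving the rest of the formula intact up to explicit $\Hf$ and $\Hf_2$ cancellations. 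I would then use the explicit product-over-pairs structure of $\Q'$ and $\K'$ given by \eqref{QARb} and \eqref{QARd}---each of $\Q'(\textbf{t})$ and $\K'(\textbf{t})$ is a monomial in the partial sums $\s_i(\textbf{t})$ times a product over $1\le i<j\le 2l$ of $\Hf(\s_j\pm\s_i+\varepsilon)$---to repackage the prefactor ratio and the two residual $\Q'/\K'$ evaluations as the four claimed quotients; in particular, each ratio $\Q'(\mathbf{s},z)/\Q'(\mathbf{s})$ contributes exactly the pairs involving the appended entry, which telescope with what remains of the prefactor.

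The main obstacle is the parity bookkeeping. Dropping $a_1$ from $\textbf{a}$ flips the parity of every index, so the two subfactors produced by \eqref{eeb}/\eqref{oob} swap their ``outer'' and ``inner'' arguments (the appearances of $a_{2\lfloor(n+1)/2\rfloor}$ and $a_{2\lfloor n/2\rfloor+1}$ switch roles), and the products over pairs with $j-i$ odd interchange with those over $j-i$ even in the Lemma~\ref{QAR} formulas. It is therefore cleanest to split the argument by the parities of $n$ and of $y$ and, in each of the four cases, to track term-by-term the cancellation of the $\Hf$- and $\Hf_2$-factors against the pair products $\Hf(\s_j\pm\s_i+\varepsilon)$ in \eqref{QARb}/\eqref{QARd}. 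The passage from the unweighted recurrence of Corollary~\ref{coro2} to the weighted recurrence here amounts only to the substitution $(2x+2i)_\bullet\mapsto(2x+2i-1)_\bullet$ in the Pochhammer block and to the additional $2^{-\E}$ factor visible in \eqref{QARb}; both contributions cancel cleanly in the ratio, so the structural outline mirrors the proof of Corollary~\ref{coro2} exactly.
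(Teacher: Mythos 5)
Your proposal matches the paper's approach exactly: the paper obtains this corollary (like Corollary \ref{coro2}) directly from Theorem \ref{main2} and Lemma \ref{QAR} by forming the ratio of the two closed-form evaluations, and offers no further detail beyond that assertion. Your outline, including the parity bookkeeping caused by dropping $a_1$ from the sequence, is a faithful and in fact more explicit account of the same computation.
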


\begin{rmk}
We notice that even though our main theorems do not allow degenerated triangular holes (i.e., $a_1,a_2,\dotsc,a_n$ are all positive), the degenerated cases can be implied from our theorems as follows. The case when all $a_i\ 's$ equal $0$ is exactly the same as the case when the sequence $\textbf{a}$ is empty. If the last term $a_n=0$, then the region $\mathcal{R}_{x,y,z}(a_1,a_2,\dotsc,a_n)$ is the same as $\mathcal{R}_{x,y,z}(a_1,a_2,\dotsc,a_{n-1})$. If $a_1=0$ and $n>1$, then there are several lozenges that are forced to be in any tilings of the regions. By removing these forced lozenges, we get the region $\mathcal{R}_{x,y,z+a_2}(a_3,\dotsc,a_n)$ that has the same number of tilings as the original region (see Figure \ref{forcehalfhex}(a)). If $a_i=0$ for some $1<i<n$, there are also some forced lozenges between the $(i-1)$-th and the $(i+1)$-th holes. The removal of these forced lozenges does not change the number of tilings of the original region and gives the region $\mathcal{R}_{x,y,z}(a_1,\dotsc,a_{i-2},a_{i-1}+a_{i+1},\dotsc,a_n)$ (see Figure \ref{forcehalfhex}(b)). This way we can eliminate all degenerated holes in our region and obtain a new $\mathcal{R}$-region with the same number of tilings whose holes are not degenerated. The same arguments work for the case of $\mathcal{R}'$-type regions.
\end{rmk}

\begin{figure}\centering
\setlength{\unitlength}{3947sp}%
\begingroup\makeatletter\ifx\SetFigFont\undefined%
\gdef\SetFigFont#1#2#3#4#5{%
  \reset@font\fontsize{#1}{#2pt}%
  \fontfamily{#3}\fontseries{#4}\fontshape{#5}%
  \selectfont}%
\fi\endgroup%
\resizebox{10cm}{!}{
\begin{picture}(0,0)%
\includegraphics{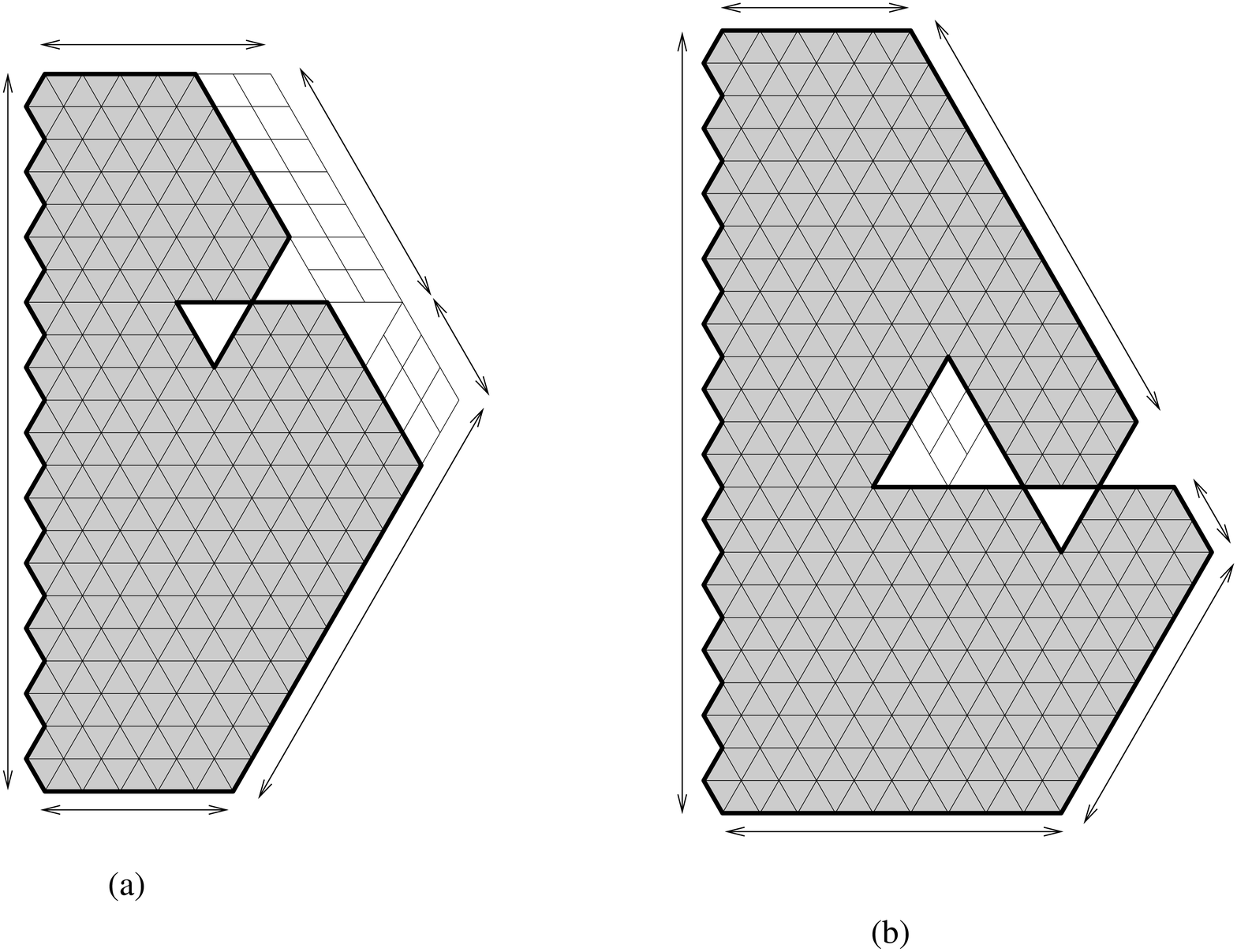}%
\end{picture}%
\begin{picture}(18178,13764)(2380,-13368)
\put(7831,-4786){\makebox(0,0)[lb]{\smash{{\SetFigFont{25}{30.0}{\rmdefault}{\mddefault}{\updefault}{$a_2$}%
}}}}
\put(6706,-4261){\makebox(0,0)[lb]{\smash{{\SetFigFont{25}{30.0}{\rmdefault}{\mddefault}{\updefault}{$a_3$}%
}}}}
\put(5746,-4726){\makebox(0,0)[lb]{\smash{{\SetFigFont{25}{30.0}{\rmdefault}{\mddefault}{\updefault}{$a_4$}%
}}}}
\put(7966,-2191){\rotatebox{300.0}{\makebox(0,0)[lb]{\smash{{\SetFigFont{25}{30.0}{\rmdefault}{\mddefault}{\updefault}{$y+2a_3-1$}%
}}}}}
\put(3931,-631){\makebox(0,0)[lb]{\smash{{\SetFigFont{25}{30.0}{\rmdefault}{\mddefault}{\updefault}{$x+a_2+a_4$}%
}}}}
\put(4081,-11896){\makebox(0,0)[lb]{\smash{{\SetFigFont{25}{30.0}{\rmdefault}{\mddefault}{\updefault}{$x+a_3$}%
}}}}
\put(7711,-10186){\rotatebox{60.0}{\makebox(0,0)[lb]{\smash{{\SetFigFont{25}{30.0}{\rmdefault}{\mddefault}{\updefault}{$y+z+2a_2+2a_4-1$}%
}}}}}
\put(9481,-4951){\makebox(0,0)[lb]{\smash{{\SetFigFont{25}{30.0}{\rmdefault}{\mddefault}{\updefault}{$z$}%
}}}}
\put(2761,-8041){\rotatebox{90.0}{\makebox(0,0)[lb]{\smash{{\SetFigFont{25}{30.0}{\rmdefault}{\mddefault}{\updefault}{$y+z+a_2+a_3+a_4-1$}%
}}}}}
\put(13816,-76){\makebox(0,0)[lb]{\smash{{\SetFigFont{25}{30.0}{\rmdefault}{\mddefault}{\updefault}{$x+a_2$}%
}}}}
\put(16891,-1726){\rotatebox{300.0}{\makebox(0,0)[lb]{\smash{{\SetFigFont{25}{30.0}{\rmdefault}{\mddefault}{\updefault}{$y+a_1+2a_3+2a_5-1$}%
}}}}}
\put(19801,-7306){\makebox(0,0)[lb]{\smash{{\SetFigFont{25}{30.0}{\rmdefault}{\mddefault}{\updefault}{$z$}%
}}}}
\put(19156,-10561){\rotatebox{60.0}{\makebox(0,0)[lb]{\smash{{\SetFigFont{25}{30.0}{\rmdefault}{\mddefault}{\updefault}{$y+z+2a_2-1$}%
}}}}}
\put(13531,-12256){\makebox(0,0)[lb]{\smash{{\SetFigFont{25}{30.0}{\rmdefault}{\mddefault}{\updefault}{$x+a_1+a_3+a_5$}%
}}}}
\put(12076,-8161){\rotatebox{90.0}{\makebox(0,0)[lb]{\smash{{\SetFigFont{25}{30.0}{\rmdefault}{\mddefault}{\updefault}{$y+z+a_1+a_2+a_3+a_5-1$}%
}}}}}
\put(18541,-6766){\makebox(0,0)[lb]{\smash{{\SetFigFont{25}{30.0}{\rmdefault}{\mddefault}{\updefault}{$a_1$}%
}}}}
\put(17431,-7306){\makebox(0,0)[lb]{\smash{{\SetFigFont{25}{30.0}{\rmdefault}{\mddefault}{\updefault}{$a_2$}%
}}}}
\put(16306,-6856){\makebox(0,0)[lb]{\smash{{\SetFigFont{25}{30.0}{\rmdefault}{\mddefault}{\updefault}{$a_3$}%
}}}}
\put(15316,-6811){\makebox(0,0)[lb]{\smash{{\SetFigFont{25}{30.0}{\rmdefault}{\mddefault}{\updefault}{$a_5$}%
}}}}
\end{picture}}
\caption{Eliminating degenerated holes from a $\mathcal{R}$-type region.} \label{forcehalfhex}
\end{figure}

The rest of the paper is organized as follows. In Section \ref{Background}, we introduce the particular version of Kuo condensation that will be employed in our proofs. Next, we prove Lemma \ref{QAR} by using a bijection between lozenge tilings and families of non-intersecting lattice paths in Section \ref{Quarter}.  In Section \ref{Proofmain}, we present the proofs of our main results, and all algebraic simplifications will be carried out in Section \ref{Recurrence}. Finally, we conclude the paper by several remarks Section \ref{Conclusion}.

\section{Preliminaries}\label{Background}

Let $G=(E,V)$ be a finite simple graph without loops. A \emph{perfect matching} (or simply \emph{matching}) $\mu$ of $G$ is a subset of the edge set $E$  that covers each vertex in $V$ exactly once. 
Tilings of a region $\mathcal{R}$ can be identified with  matchings of its \emph{(planar) dual graph}, the graph whose vertices are unit triangles in $\mathcal{R}$ and whose edges connect precisely two unit triangles sharing an edge. In the weighted case, each edge of the dual graph $G$ carries the same weight as it corresponding lozenge in $\mathcal{R}$. We use the notation $\M(G)$ for the sum of weights of all matchings in $G$, where the \emph{weight} of a matching is the product of all weights of  its edges. If the graph $G$ is unweighted, then $\M(G)$ is reduced to  the number of matchings of $G$.

A \emph{forced lozenge} of a region is a lozenge that is contained in every tiling of the region. Assume that we remove several forced lozenges $l_1,l_2,\dotsc,l_k$ from a region $\mathcal{R}$ and obtain a new region $\mathcal{R}'$, then
\begin{equation}
\M(\mathcal{R})=\M(\mathcal{R}') \cdot \prod_{i=1}^{k} wt(l_i),
\end{equation}
where $wt(l_i)$ is the weight of the forced lozenge $l_i$.

The unit triangles in the triangular lattice have two orientations: up-pointing and down-pointing. It is easy to see that if a region $\mathcal{R}$ admits a tiling, then $\mathcal{R}$ has the same number of up-pointing and down-pointing unit triangles. If a region $\mathcal{R}$ satisfies the latter balancing condition, we say that  $\mathcal{R}$ is \emph{balanced}. The following useful lemma allows one to `break down' a region into two smaller subregions when enumerating its tilings.

\begin{lem}[Region-Splitting Lemma]\label{RS}
Let $\mathcal{R}$ be a balanced region on the triangular lattice, and $\mathcal{S}$ a subregion of $\mathcal{R}$ satisfying the following conditions:
\begin{enumerate}
\item[(i)] All unit triangles running along each side of the boundary between $\mathcal{S}$ and its complement $\mathcal{R}-\mathcal{S}$ have the same orientation  (all up-pointing or all    down-pointing);
\item[(ii)] $\mathcal{S}$ is balanced.
\end{enumerate}
Then $\M(\mathcal{R})=\M(\mathcal{S})\M(\mathcal{R}-\mathcal{S})$.
\end{lem}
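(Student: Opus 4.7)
The plan is to show that every tiling of $\mathcal{R}$ decomposes uniquely (and with multiplicative weight) into a tiling of $\mathcal{S}$ paired with a tiling of $\mathcal{R}-\mathcal{S}$. The easy direction is immediate: any pair $(\tau_1,\tau_2)$ of tilings of $\mathcal{S}$ and $\mathcal{R}-\mathcal{S}$ glues along the common boundary into a valid tiling $\tau_1\cup\tau_2$ of $\mathcal{R}$ with $\wt(\tau_1\cup\tau_2)=\wt(\tau_1)\cdot\wt(\tau_2)$, so $\M(\mathcal{R})\geq\M(\mathcal{S})\M(\mathcal{R}-\mathcal{S})$. The content of the lemma is to promote this to equality by showing the gluing map is a bijection, equivalently that no lozenge of any tiling $\tau$ of $\mathcal{R}$ straddles the internal boundary $\partial$ separating $\mathcal{S}$ from $\mathcal{R}-\mathcal{S}$.

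Assume $\tau$ contains a straddling lozenge $L$. Then $L$ consists of one unit triangle in $\mathcal{S}$ and one in $\mathcal{R}-\mathcal{S}$ sharing an edge on $\partial$, and the two triangles are necessarily of opposite orientation. Using condition (i), all unit triangles of $\mathcal{S}$ bordering $\partial$ have a common orientation; without loss of generality, say they are all up-pointing (the other case is identical after swapping up and down). Consequently, every straddling lozenge of $\tau$ has its up-triangle in $\mathcal{S}$ and its down-triangle in $\mathcal{R}-\mathcal{S}$.

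Let $k$ be the number of straddling lozenges of $\tau$ and let $u(\mathcal{S}),d(\mathcal{S})$ denote the numbers of up- and down-pointing unit triangles in $\mathcal{S}$. The straddlers consume $k$ up-triangles of $\mathcal{S}$ and no down-triangles, so the remaining $u(\mathcal{S})-k$ up-triangles and $d(\mathcal{S})$ down-triangles of $\mathcal{S}$ must be covered by lozenges of $\tau$ lying entirely in $\mathcal{S}$, each using one triangle of each orientation. This forces $u(\mathcal{S})-k=d(\mathcal{S})$, and hypothesis (ii) gives $u(\mathcal{S})=d(\mathcal{S})$, whence $k=0$. Hence no lozenge straddles $\partial$, the gluing map is a bijection, and $\M(\mathcal{R})=\M(\mathcal{S})\M(\mathcal{R}-\mathcal{S})$. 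I expect the only delicate point to be the correct reading of condition (i), namely that it forces all potential straddlers to be of a single \emph{type} (up-in-$\mathcal{S}$, say); that uniformity is precisely what allows the one-line balance equation from (ii) to eliminate them all at once.
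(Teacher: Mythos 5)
Your proof is correct. The paper itself gives only a two-line proof: it passes to the dual graphs $G$ and $H$ of $\mathcal{R}$ and $\mathcal{S}$ and invokes the Graph-Splitting Lemma (Lemma 2.1 of the cited Ciucu--Lai paper \cite{Lai2}), so all the actual work is outsourced to that reference. What you have written is, in effect, a self-contained proof of that graph lemma specialized to lozenge tilings: your count of boundary-straddling lozenges against the balance of $\mathcal{S}$ is exactly the counting argument that proves the cited lemma (edges joining $H$ to $G-H$ can only use vertices of one colour class of $H$, so balancedness of $H$ forces a perfect matching to restrict to a perfect matching of $H$). The one point worth making explicit is the one you flag yourself: condition (i) must be read globally, i.e.\ \emph{all} unit triangles of $\mathcal{S}$ adjacent to the internal boundary share a single orientation (equivalently, in the dual graph all vertices of $H$ with neighbours outside $H$ lie in one vertex class). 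If one instead allowed different orientations along different straight segments of the boundary, your balance equation would only give that the two types of straddlers occur in equal numbers, and indeed the statement would be false in that generality; the global reading is the one under which the cited graph lemma applies, and it is the one satisfied in every application made in this paper.
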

\begin{proof}
Let $G$ and $H$ be the dual graphs of $\mathcal{R}$ and $\mathcal{S}$, respectively. Then $G$ and $H$ satisfy the conditions in Lemma 2.1 in \cite{Lai2}, and the lemma follows from the duality between tilings and matchings.
\end{proof}

 Eric H. Kuo \cite{Kuo} introduced a graphical counterpart of the well-known \emph{Dodgson condensation} in linear algebra (which is a special case of the \emph{Desnanot--Jacobi identity}, see e.g. \cite{Abeles, Dodgson, Mui}) to (re-)prove the Aztec diamond theorem by Ekies, Kupperberg, Larsen, and Propp \cite{EKLP1,EKLP2}. His method, usually mentioned as \emph{Kuo condensation}, has become a strong tool in the enumeration of tilings and matchings. We will employ the following versions of Kuo condensation in our proofs.

\begin{thm}[Theorem 5.1 in \cite{Kuo}]\label{kuothm1}
Let $G=(V_1,V_2,E)$ be a (weighted) planar bipartite graph with the two vertex classes $V_1$ and $V_2$ so that $|V_1|=|V_2|$.  Assume that $u,v,w,s$ are four vertices appearing on a face of $G$ in a cyclic order, such that $u,w\in V_1$ and $v,s\in V_2$. Then
\begin{equation}
\M(G)\M(G-\{u,v,w,s\})=\M(G-\{u,v\})\M(G-\{w,s\})+\M(G-\{u,s\})\M(G-\{v,w\}).
\end{equation}
\end{thm}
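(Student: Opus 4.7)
The plan is the classical symmetric-difference argument for Kuo's graphical condensation: I will interpret both sides of the identity as weighted counts of pairs of perfect matchings, and exhibit an explicit weight-preserving bijection between them.

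First, I would analyse the right-hand side. A pair $(M_1,M_2)\in\mathcal{M}(G-\{u,v\})\times\mathcal{M}(G-\{w,s\})$ produces a multigraph $M_1\cup M_2$ whose connected components are doubled edges, even alternating cycles, and alternating paths. The vertices uncovered by exactly one of the two matchings are $u,v$ (missing from $M_1$) and $w,s$ (missing from $M_2$), so these four are precisely the path endpoints. Using bipartiteness with $u,w\in V_1$ and $v,s\in V_2$, a parity/colour check on alternating paths shows that the only ways to match up the four endpoints into paths are $\{u,v\},\{w,s\}$ (joined by odd-length paths) and $\{u,w\},\{v,s\}$ (joined by even-length paths).

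Next, I would rule out the second possibility using the face hypothesis. Since $u,v,w,s$ lie in this cyclic order on a single face $F$, any two vertex-disjoint alternating paths pairing $\{u,w\}$ and $\{v,s\}$ could be closed into Jordan curves by arcs drawn inside $F$; those two arcs necessarily cross inside $F$, because $v$ and $s$ lie in different components of $F$ minus any arc from $u$ to $w$. A transverse crossing of two Jordan curves in the plane forces an even number of crossings, but the paths themselves are vertex-disjoint in a planar embedding and so cannot cross outside $F$. This contradicts the single crossing inside $F$, ruling out the pairing $\{u,w\},\{v,s\}$. Hence every pair $(M_1,M_2)$ yields the pairing $\{u,v\},\{w,s\}$.

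The bijection is then the symmetric-difference swap: along each of the two alternating paths, interchange the $M_1$- and $M_2$-edges. The result is a pair $(M,N)$ where $M$ is a perfect matching of $G$ (now covering $u,v,w,s$) and $N$ is a perfect matching of $G-\{u,v,w,s\}$, whose symmetric difference realises the same pairing $\{u,v\},\{w,s\}$. This is a weight-preserving involution, since the same multiset of edges is merely redistributed between the two matchings. It identifies $\M(G-\{u,v\})\,\M(G-\{w,s\})$ with the weighted count of those $(M,N)\in\mathcal{M}(G)\times\mathcal{M}(G-\{u,v,w,s\})$ whose paths pair as $\{u,v\},\{w,s\}$. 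An identical argument, now with pairing $\{u,s\},\{v,w\}$, handles the second product on the right. A parity/colour check on the left shows that these two pairings exhaust the possibilities for $(M,N)$, and summing gives the identity. The main obstacle is the planarity step: formalising the "no crossing" argument requires a careful use of the Jordan curve theorem inside the planar embedding, together with the observation that the closing arcs inside $F$ may be drawn transversely to its boundary and disjoint from the alternating paths themselves; the remaining bipartite parity bookkeeping and the symmetric-difference swap are routine.
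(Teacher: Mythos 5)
The paper does not prove this statement at all---it is imported verbatim as Theorem 5.1 of Kuo's paper \cite{Kuo}---so the only meaningful comparison is with the standard superposition argument, which is indeed the route you take. Your setup is correct: the superposition $M_1\cup M_2$ decomposes into doubled edges, alternating cycles, and two alternating paths with endpoints $u,v,w,s$; the bipartite parity check correctly eliminates the pairing $\{u,s\},\{v,w\}$ on the right-hand side and $\{u,w\},\{v,s\}$ on the left; and the Jordan-curve argument ruling out $\{u,w\},\{v,s\}$ via the cyclic order on the face is the right (and necessary) use of planarity.

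There is, however, a concrete error in the bijection itself. You say to interchange the $M_1$- and $M_2$-edges \emph{along each of the two alternating paths} and claim the result is a pair in $\mathcal{M}(G)\times\mathcal{M}(G-\{u,v,w,s\})$. It is not. Write $P_1$ for the path joining $u$ and $v$ (its end-edges lie in $M_2$, since $M_1$ misses $u,v$) and $P_2$ for the path joining $w$ and $s$ (end-edges in $M_1$). Swapping along $P_1$ transfers coverage of $u,v$ from $M_2$ to $M_1$; swapping along $P_2$ transfers coverage of $w,s$ from $M_1$ to $M_2$. Doing both therefore produces a perfect matching of $G-\{w,s\}$ and one of $G-\{u,v\}$---you have merely exchanged the roles of $M_1$ and $M_2$, not landed in $\mathcal{M}(G)\times\mathcal{M}(G-\{u,v,w,s\})$. (Test on the $4$-cycle $u\,v\,w\,s$: from $(\{ws\},\{uv\})$ your double swap returns $(\{uv\},\{ws\})$, and neither entry is a perfect matching of $G$.) The fix is to swap along exactly one path, namely $P_1$: set $M=M_1\,\triangle\,P_1$ and $N=M_2\,\triangle\,P_1$, which gives $M\in\mathcal{M}(G)$, $N\in\mathcal{M}(G-\{u,v,w,s\})$, preserves the multiset $M_1\cup M_2$ (hence the weight product), and is inverted by the same operation. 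With that one-line correction the rest of your argument goes through.
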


\begin{thm}[Theorem 5.3 in \cite{Kuo}]\label{kuothm2}
Let $G=(V_1,V_2,E)$ be a (weighted) planar bipartite graph with the two vertex classes $V_1$ and $V_2$ so that $|V_1|=|V_2|+1$.  Assume that $u,v,w,s$ are four vertices appearing on a face of $G$ in a cyclic order, such that $u,v,w\in V_1$ and $s\in V_2$. Then
\begin{equation}
\M(G-\{v\})\M(G-\{u,w,s\})=\M(G-\{u\})\M(G-\{v,w,s\})+\M(G-\{w\})\M(G-\{u,v,s\}).
\end{equation}
\end{thm}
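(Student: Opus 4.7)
The plan is a bijective superposition argument in the spirit of Kuo's proof of the balanced condensation theorem (Theorem~\ref{kuothm1}), adapted to the unbalanced setting $|V_1|=|V_2|+1$. For a pair $(\mu_1,\mu_2)$ contributing to the left-hand side — so $\mu_1$ is a perfect matching of $G-\{v\}$ and $\mu_2$ is one of $G-\{u,w,s\}$ — I would form the edge multiset $\mu_1\cup\mu_2$. A quick degree count shows that each of $u,v,w,s$ appears in exactly one of $\mu_1,\mu_2$, while every other vertex appears in both; hence $\mu_1\cup\mu_2$ decomposes canonically into doubled edges, alternating cycles, and exactly two alternating paths whose endpoints are precisely $u,v,w,s$.

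The key combinatorial step is to classify the endpoint pairings of those two paths. Because $u,v,w,s$ lie on a common face of the planar graph $G$ in this cyclic order, the pairing $\{u,w\}/\{v,s\}$ is ruled out: two interior paths in the face with interleaved endpoints would be forced to cross. Only the two non-crossing pairings $\{u,v\}/\{w,s\}$ and $\{u,s\}/\{v,w\}$ remain admissible. Running the same superposition analysis on each of the two products on the right-hand side yields configurations with the same exposed set $\{u,v,w,s\}$ and the same two admissible pairings.

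The heart of the proof is then a pair of weight-preserving bijections realized by \emph{switching} the $\mu_1$- and $\mu_2$-roles of edges along one of the two alternating paths. Starting from an LHS pair with pairing $\{u,v\}/\{w,s\}$, switching along the $u$-to-$v$ path yields a new pair $(\mu_1',\mu_2')$ in which $\mu_1'$ matches $G-\{u\}$ and $\mu_2'$ matches $G-\{v,w,s\}$ — precisely a contribution to the first product on the right. Pairs with pairing $\{u,s\}/\{v,w\}$ are handled analogously by switching along the $v$-to-$w$ path, producing contributions to the second product. Weight preservation is automatic, since the product of edge weights taken over the multiset $\mu_1\cup\mu_2$ is invariant under the switch.

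The main obstacle I anticipate is making the planarity/non-crossing step fully rigorous, typically by embedding the common face of $u,v,w,s$ as the outer face of $G$ and invoking a Jordan-curve argument inside it to rule out the crossing pairing on all three sides of the identity. A secondary bookkeeping point is to check that the two bijections on the two pairing classes are mutual inverses of their analogues built from the right-hand side; this follows from the observation that switching along a path is an involution that leaves the underlying path/cycle structure of the superposition unchanged. Summing over all admissible pairs in each pairing class then produces exactly the identity of Theorem~\ref{kuothm2}.
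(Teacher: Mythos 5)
First, a point of reference: the paper does not prove this statement — it is imported verbatim as Theorem 5.3 of Kuo \cite{Kuo} — so your attempt can only be measured against Kuo's original superposition argument, which is indeed the strategy you outline. Your degree count on the left-hand side, the decomposition of $\mu_1\cup\mu_2$ into doubled edges, cycles, and exactly two alternating paths ending at $u,v,w,s$, the Jordan-curve exclusion of the interleaved pairing $\{u,w\}/\{v,s\}$, the two switching maps, and the weight-preservation remark are all correct and are exactly the right ingredients.

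There is, however, one substantive error that leaves the argument incomplete as written: the claim that each of the two right-hand-side products ``yields configurations with \dots the same two admissible pairings.'' This is false, and the correct statement is precisely what makes the identity close. In the superposition of $\mu_1\in\M(G-\{u\})$ with $\mu_2\in\M(G-\{v,w,s\})$, the path edge at $u$ must come from $\mu_2$ while the path edges at $v$, $w$, $s$ must come from $\mu_1$. Since an alternating path whose two endpoints lie in the same colour class has an even number of edges and hence ends in \emph{different} matchings, while one whose endpoints lie in opposite classes has an odd number of edges and hence ends in the \emph{same} matching, a path from $v$ to $w$ (same class, but $\mu_1$ forced at both ends) and a path from $u$ to $s$ (opposite classes, but different matchings forced at the ends) are both parity-impossible here. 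So the first product realizes only the pairing $\{u,v\}/\{w,s\}$, and symmetrically the second realizes only $\{u,s\}/\{v,w\}$. This is essential: if both pairings genuinely occurred for each right-hand product, your switch map from the left-hand side would be injective but not surjective, the right-hand side would be overcounted, and the identity would not follow. Note also that on the left-hand side the interleaved pairing $\{u,w\}/\{v,s\}$ is already killed by this same parity bookkeeping, whereas for the two right-hand products it is parity-consistent and planarity is genuinely needed — so your plan to run the Jordan-curve argument ``on all three sides'' is correct, but it must be supplemented by the parity analysis above to pin down exactly one admissible pairing per right-hand product.
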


\section{Quartered hexagons and proof of Lemma \ref{QAR}}\label{Quarter}

This section is devoted to the proof of Lemma \ref{QAR}, based on the previous result of the author in \cite{Lai} about a family of regions called \emph{quartered hexagons}  as follows.

We start with a trapezoidal region whose northern, northeastern, and southern have lengths $n,m,n+\left\lfloor\frac{m+1}{2}\right\rfloor$, respectively, and the western side follows the vertical  zigzag lattice path with $\frac{m}{2}$ steps (when $m$ is odd, the western side has $\frac{m-1}{2}$ and a half `bumps'). Next, we remove $k=\left\lfloor\frac{m+1}{2}\right\rfloor$ up-pointing unit triangles at the positions $a_1, a_2, \dotsc, a_k$ (ordered from left to right) from the base of the trapezoidal region and obtain the \emph{quartered hexagon} $L_{m,n}(a_1,a_2,\dotsc,a_k)$ (see Figure \ref{halfhex3b} (a)  for the case of even $m$, and Figure \ref{halfhex3b}(b) for the case of odd $m$). We also consider the weighted version $\overline{L}_{m,n}(a_1,a_2,\dotsc,a_k)$ of the quartered hexagon $L_{m,n}(a_1,a_2,\dotsc,a_k)$ by assigning to each vertical lozenge on the western  side a weight $\frac{1}{2}$ (see Figures \ref{halfhex3b}(c) and (d); the lozenges having shaded cores  are weighted by $\frac{1}{2}$).

\begin{figure}
  \centering
  \includegraphics[width=8cm]{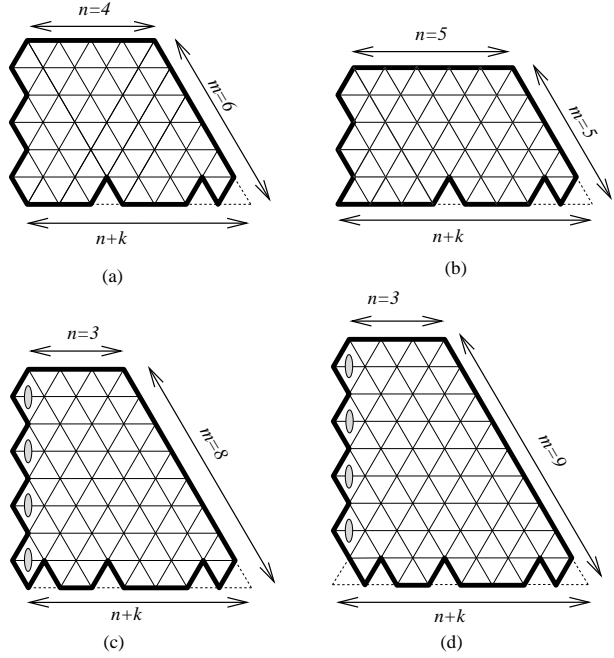}
  \caption{The quartered hexagons: (a) $L_{6,4}(3,6,7)$, (b)  $L_{5,5}(4,7,8)$, (c) $\overline{L}_{8,3}(1,3,6,7)$, and (d) $\overline{L}_{9,3}(1,2,4,7,8)$.}\label{halfhex3b}
\end{figure}

\begin{lem}\label{QAR2}
For any $1\leq k <n$ and $1\leq a_1<a_2<\dotsc<a_k\leq n$
\begin{equation}\label{(a)}
\M(L_{2k,n}(a_1,a_2,\dotsc,a_k))=\frac{a_1a_2\dotsc a_k}{\Hf_2(2k+1)}\prod_{1\leq i<j\leq k}(a_j-a_i)\prod_{1\leq i<j\leq k}(a_i+a_j),
\end{equation}
\begin{equation}\label{(b)}
\M(L_{2k-1,n}(a_1,a_2,\dotsc,a_k))=\frac{1}{\Hf_2(2k)}\prod_{1\leq i<j\leq k}(a_j-a_i)\prod_{1\leq i<j\leq k}(a_i+a_j-1),
\end{equation}
\begin{equation}\label{(c)}
\M(\overline{L}_{2k,n}(a_1,a_2,\dotsc,a_k))=\frac{2^{-k}}{\Hf_2(2k+1)}\prod_{1\leq i<j\leq k}(a_j-a_i)\prod_{1\leq i\leq j\leq k}(a_i+a_j-1),
\end{equation}
\begin{equation}\label{(d)}
\M(\overline{L}_{2k-1,n}(a_1,a_2,\dotsc,a_k))=\frac{1}{\Hf_2(2k)}\prod_{1\leq i<j\leq k}(a_j-a_i)\prod_{1\leq i<j\leq k}(a_i+a_j-2).
\end{equation}
\end{lem}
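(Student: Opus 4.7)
The plan is to reduce each of the four formulas to a determinant via lattice paths and then evaluate that determinant directly. First, I would apply the standard bijection between lozenge tilings of a simply-connected region on the triangular lattice and non-intersecting families of lattice paths to $L_{m,n}(a_1,\dotsc,a_k)$. The $k$ up-pointing unit triangles removed from the base at positions $a_1 < \dotsb < a_k$ serve as the endpoints of the paths, while the western zigzag produces $k$ starting points determined by its bumps; for odd $m$ there is an extra half-bump which effectively shifts one of the starts by $\tfrac12$. Each single-path count from a start $s_i$ to an endpoint $a_j$ is a binomial coefficient $c_{ij}$, so by the Lindström--Gessel--Viennot (LGV) lemma,
\begin{equation*}
\M(L_{m,n}(a_1,\dotsc,a_k)) = \det\bigl[c_{ij}\bigr]_{1 \le i,j \le k}.
\end{equation*}

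Second, I would evaluate this determinant. The shape of the right-hand sides in (a)--(d) --- a Vandermonde-like product $\prod_{i<j}(a_j - a_i)$ times a second product of the form $\prod_{i \le j}(a_i + a_j + \text{shift})$ --- strongly suggests that, after clearing common factors from the rows, the determinant reduces to one of the classical BC-type Vandermonde determinants $\det[a_j^{2i-1}] = \prod_j a_j \prod_{i<j}(a_j^2 - a_i^2)$ or $\det[a_j^{2(i-1)}] = \prod_{i<j}(a_j^2 - a_i^2)$. The unweighted cases (a) and (b) follow directly from these two evaluations (in the odd case, after a substitution $a_j \mapsto a_j - \tfrac12$ to absorb the half-bump, which produces the $a_i+a_j-1$ factors in (b)). The universal denominators $\Hf_2(2k+1)$ and $\Hf_2(2k)$ come from the fixed data of the starting points, i.e.\ from the common factors in the rows that are pulled out during the reduction.

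For the weighted formulas (c) and (d), the $\tfrac12$-weights on the western vertical lozenges feed directly into the bijection: the presence or absence of such a lozenge decides whether the corresponding path begins with a vertical step, so each row of the LGV matrix is replaced by a weighted combination of two binomial rows. A short manipulation absorbs the weighted step into the entries and again produces a BC-type Vandermonde determinant, this time yielding the products $\prod_{i \le j}(a_i + a_j - 1)$ or $\prod_{i<j}(a_i + a_j - 2)$; the overall factor of $2^{-k}$ in (c) is precisely the contribution of the $k$ weighted initial steps.

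The main obstacle will be the bookkeeping needed to carry all four cases through simultaneously: even versus odd $m$ determines whether the west side has a half-bump, and weighted versus unweighted determines whether the initial vertical step carries a $\tfrac12$. Each of the four cases produces a slightly different LGV determinant, and the technical core of the proof is to show that every such determinant collapses onto the claimed product. This is routine in principle --- a sequence of row/column operations followed by a classical Vandermonde evaluation, or equivalently an application of one of Krattenthaler's BC-type determinant identities --- but keeping track of half-shifts and of the weights on the westernmost vertical lozenges in parallel is where most of the care is required.
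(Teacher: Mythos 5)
Your proposal follows essentially the same route as the paper: encode tilings as non-intersecting lattice paths, apply Lindstr\"om--Gessel--Viennot to get a determinant of binomial entries (with the $\tfrac12$-weights entering as a weighted combination of two binomials in the weighted cases), and evaluate it by pulling out factorials and invoking a BC-type Vandermonde/Krattenthaler determinant identity --- exactly what the paper does for \eqref{(d)}, and what its cited reference \cite{Lai} does for \eqref{(a)}--\eqref{(c)}. The only difference is organizational: the paper writes out this argument only for \eqref{(d)} and cites \cite[Theorem 3.1]{Lai} for the other three identities.
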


\begin{figure}
  \centering
  \setlength{\unitlength}{3947sp}%
\begingroup\makeatletter\ifx\SetFigFont\undefined%
\gdef\SetFigFont#1#2#3#4#5{%
  \reset@font\fontsize{#1}{#2pt}%
  \fontfamily{#3}\fontseries{#4}\fontshape{#5}%
  \selectfont}%
\fi\endgroup%
\resizebox{13cm}{!}{
\begin{picture}(0,0)%
\includegraphics{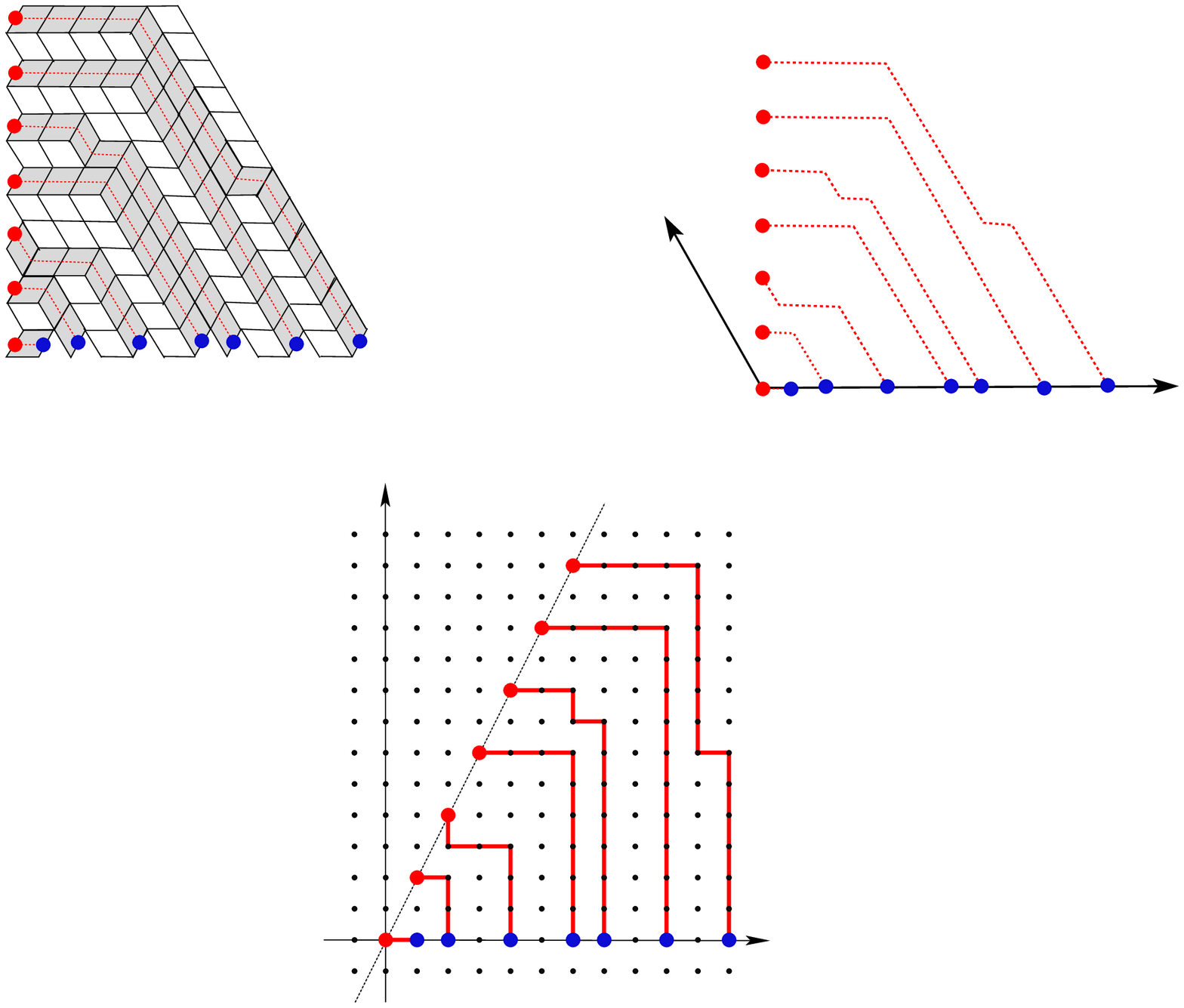}%
\end{picture}%
\begin{picture}(9339,8056)(2897,-7788)
\put(5961,-3583){\makebox(0,0)[lb]{\smash{{\SetFigFont{12}{14.4}{\rmdefault}{\mddefault}{\updefault}{$y$}%
}}}}
\put(9043,-7161){\makebox(0,0)[lb]{\smash{{\SetFigFont{12}{14.4}{\rmdefault}{\mddefault}{\updefault}{$x$}%
}}}}
\put(7391,-7711){\makebox(0,0)[lb]{\smash{{\SetFigFont{12}{14.4}{\rmdefault}{\mddefault}{\updefault}{(c)}%
}}}}
\put(10214,-3550){\makebox(0,0)[lb]{\smash{{\SetFigFont{12}{14.4}{\rmdefault}{\mddefault}{\updefault}{(b)}%
}}}}
\put(4673,-3257){\makebox(0,0)[lb]{\smash{{\SetFigFont{12}{14.4}{\rmdefault}{\mddefault}{\updefault}{(a)}%
}}}}
\put(6044,-6838){\makebox(0,0)[lb]{\smash{{\SetFigFont{12}{14.4}{\rmdefault}{\mddefault}{\updefault}{$O$}%
}}}}
\put(7983,-3708){\makebox(0,0)[lb]{\smash{{\SetFigFont{12}{14.4}{\rmdefault}{\mddefault}{\updefault}{$y=2x$}%
}}}}
\put(8169,-1621){\makebox(0,0)[lb]{\smash{{\SetFigFont{12}{14.4}{\rmdefault}{\mddefault}{\updefault}{$y$}%
}}}}
\put(12114,-2956){\makebox(0,0)[lb]{\smash{{\SetFigFont{12}{14.4}{\rmdefault}{\mddefault}{\updefault}{$x$}%
}}}}
\put(8956,-3024){\makebox(0,0)[lb]{\smash{{\SetFigFont{12}{14.4}{\rmdefault}{\mddefault}{\updefault}{$O$}%
}}}}
\put(11660,-2979){\makebox(0,0)[lb]{\smash{{\SetFigFont{12}{14.4}{\rmdefault}{\mddefault}{\itdefault}{$u_7$}%
}}}}
\put(8556,-2729){\makebox(0,0)[lb]{\smash{{\SetFigFont{12}{14.4}{\rmdefault}{\mddefault}{\itdefault}{$v_1$}%
}}}}
\put(8761,-2206){\makebox(0,0)[lb]{\smash{{\SetFigFont{12}{14.4}{\rmdefault}{\mddefault}{\itdefault}{$v_2$}%
}}}}
\put(8623,-1851){\makebox(0,0)[lb]{\smash{{\SetFigFont{12}{14.4}{\rmdefault}{\mddefault}{\itdefault}{$v_3$}%
}}}}
\put(8571,-1491){\makebox(0,0)[lb]{\smash{{\SetFigFont{12}{14.4}{\rmdefault}{\mddefault}{\itdefault}{$v_4$}%
}}}}
\put(8616,-1034){\makebox(0,0)[lb]{\smash{{\SetFigFont{12}{14.4}{\rmdefault}{\mddefault}{\itdefault}{$v_5$}%
}}}}
\put(8623,-591){\makebox(0,0)[lb]{\smash{{\SetFigFont{12}{14.4}{\rmdefault}{\mddefault}{\itdefault}{$v_6$}%
}}}}
\put(2912,-2432){\makebox(0,0)[lb]{\smash{{\SetFigFont{12}{14.4}{\rmdefault}{\mddefault}{\itdefault}{$v_1$}%
}}}}
\put(2972,-1997){\makebox(0,0)[lb]{\smash{{\SetFigFont{12}{14.4}{\rmdefault}{\mddefault}{\itdefault}{$v_2$}%
}}}}
\put(2979,-1554){\makebox(0,0)[lb]{\smash{{\SetFigFont{12}{14.4}{\rmdefault}{\mddefault}{\itdefault}{$v_3$}%
}}}}
\put(2927,-1194){\makebox(0,0)[lb]{\smash{{\SetFigFont{12}{14.4}{\rmdefault}{\mddefault}{\itdefault}{$v_4$}%
}}}}
\put(2972,-737){\makebox(0,0)[lb]{\smash{{\SetFigFont{12}{14.4}{\rmdefault}{\mddefault}{\itdefault}{$v_5$}%
}}}}
\put(2979,-294){\makebox(0,0)[lb]{\smash{{\SetFigFont{12}{14.4}{\rmdefault}{\mddefault}{\itdefault}{$v_6$}%
}}}}
\put(2994, 44){\makebox(0,0)[lb]{\smash{{\SetFigFont{12}{14.4}{\rmdefault}{\mddefault}{\itdefault}{$v_7$}%
}}}}
\put(3451,-2664){\makebox(0,0)[lb]{\smash{{\SetFigFont{12}{14.4}{\rmdefault}{\mddefault}{\itdefault}{$u_1$}%
}}}}
\put(3751,-2656){\makebox(0,0)[lb]{\smash{{\SetFigFont{12}{14.4}{\rmdefault}{\mddefault}{\itdefault}{$u_2$}%
}}}}
\put(4246,-2649){\makebox(0,0)[lb]{\smash{{\SetFigFont{12}{14.4}{\rmdefault}{\mddefault}{\itdefault}{$u_3$}%
}}}}
\put(4734,-2649){\makebox(0,0)[lb]{\smash{{\SetFigFont{12}{14.4}{\rmdefault}{\mddefault}{\itdefault}{$u_4$}%
}}}}
\put(4974,-2641){\makebox(0,0)[lb]{\smash{{\SetFigFont{12}{14.4}{\rmdefault}{\mddefault}{\itdefault}{$u_5$}%
}}}}
\put(5446,-2641){\makebox(0,0)[lb]{\smash{{\SetFigFont{12}{14.4}{\rmdefault}{\mddefault}{\itdefault}{$u_6$}%
}}}}
\put(5956,-2649){\makebox(0,0)[lb]{\smash{{\SetFigFont{12}{14.4}{\rmdefault}{\mddefault}{\itdefault}{$u_7$}%
}}}}
\put(9155,-2994){\makebox(0,0)[lb]{\smash{{\SetFigFont{12}{14.4}{\rmdefault}{\mddefault}{\itdefault}{$u_1$}%
}}}}
\put(9455,-2986){\makebox(0,0)[lb]{\smash{{\SetFigFont{12}{14.4}{\rmdefault}{\mddefault}{\itdefault}{$u_2$}%
}}}}
\put(9950,-2979){\makebox(0,0)[lb]{\smash{{\SetFigFont{12}{14.4}{\rmdefault}{\mddefault}{\itdefault}{$u_3$}%
}}}}
\put(10438,-2979){\makebox(0,0)[lb]{\smash{{\SetFigFont{12}{14.4}{\rmdefault}{\mddefault}{\itdefault}{$u_4$}%
}}}}
\put(10678,-2971){\makebox(0,0)[lb]{\smash{{\SetFigFont{12}{14.4}{\rmdefault}{\mddefault}{\itdefault}{$u_5$}%
}}}}
\put(11150,-2971){\makebox(0,0)[lb]{\smash{{\SetFigFont{12}{14.4}{\rmdefault}{\mddefault}{\itdefault}{$u_6$}%
}}}}
\put(8638,-253){\makebox(0,0)[lb]{\smash{{\SetFigFont{12}{14.4}{\rmdefault}{\mddefault}{\itdefault}{$v_7$}%
}}}}
\end{picture}}
  \caption{Encoding a tiling of the region $\overline{L}_{13,5}(2,3,5,7,8,10,12)$ as a family of non-intersecting lattice paths in $\mathbb{Z}^2$.}\label{halfhex16}
\end{figure}

\begin{proof}
The tiling formulas (\ref{(a)}),  (\ref{(b)}), and  (\ref{(c)}) were already proven in  \cite[Theorem 3.1]{Lai} by using a bijection between a lozenge tilings and families of nonintersecting lattice paths (see also \cite{KGV} for an equivalent enumeration). The regions $L_{2k,n}(a_1,a_2,\dotsc,a_k)$,  $L_{2k-1,n}(a_1,a_2,\dotsc,a_k)$, and $\overline{L}_{2k,n}(a_1,a_2,\dotsc,a_k)$ were denoted respectively by $QH_{2k,n}(a_1,a_2,\dotsc,a_k)$,  $QH_{2k-1,n}(a_1,a_2,\dotsc,a_k)$, and $\overline{QH}_{2k,n}(a_1,a_2,\dotsc,a_k)$ in \cite{Lai}.   However, the work in \cite{Lai} does \emph{not} cover the last equality (\ref{(d)}) (in particular, the region $\overline{QH}_{2k-1,n}(a_1,a_2,\dotsc,a_k)$ in \cite{Lai} is \emph{different} from our region $\overline{L}_{2k-1,n}(a_1,a_2,\dotsc,a_k)$). We will show here a proof for (\ref{(d)}), following the lines in the proof of Theorem 3.1 in \cite{Lai}.

\medskip

First, we encode each weighted tiling $T$ of the region $\overline{L}_{2k-1,n}(a_1,a_2,\dotsc,a_k)$ by a family of $n$ disjoint lozenge paths consisting right-tilting and vertical lozenges that start from the bottom and end at the western side of the quartered hexagon (see the shaded paths in Figure \ref{halfhex16}(a)). This family of disjoint lozenge paths yields a family of a nonintersecting lattice paths on an obtuse coordinate system (see Figure \ref{halfhex16}(b)). Normalizing the latter coordinate  and rotating it in standard position, we obtain a family of non-intersecting lattice paths $\wp_T=(\lambda_1,\lambda_2,\dotsc,\lambda_k)$ in the square grid $\mathbb{Z}^2$  with the starting points $u_1,u_2,\dotsc, u_k$ and the ending points $v_1,v_2,\dotsc, v_k$, where $u_i=(a_i,0)$ and $v_j=(j-1,2j-2)$ (see Figure \ref{halfhex16}(c)). It is easy to see that the lattice path $\lambda_i$ starting from $u_i$ must end at $v_i$, for any $i=1,2,\dotsc,k$. We assign to each vertical lattice segment connecting $(j,2j)$ and $(j,2j-1)$ (for $j=1,2,\dotsc,n$) a weight $\frac{1}{2}$; all other lattice segments are weighted by $1$. We define the \emph{weight} of  a lattice path in $\mathbb{Z}^2$ to be the weight product of its constituent lattice segments, and the weight of a family of non-intersecting paths is the product of weights of all the paths in the family. This gives a weight-preserving bijection between tilings $T$ of $\overline{L}_{2k-1,n}(a_1,a_2,\dotsc,a_k)$ and families of non-intersecting lattice paths $\wp_T$.

 By Lindstr\"{o}m-Gessel-Viennot Lemma (see \cite[Lemma 1]{Lind},  or  \cite[Theorem 1.2]{Stem}), the sum of weights of all lattice path families  $\wp_T$  is equal  to the determinant of the matrix $A=[a_{i,j}]_{n\times n}$, whose entry $a_{i,j}$ is the weighted sum of all lattice paths connecting $u_i$ and $v_j$. The set of lattice paths connecting $u_i$ and $v_j$ can be partitioned into two subsets: the set $\mathcal{A}$ consisting of all paths that pass the point $(j-2,2j-2)$ and the set $\mathcal{B}$   consisting of all paths that pass the point $(j-1,2j-3)$. All paths in $\mathcal{A}$ have weight $\frac{1}{2}$ (since each of them contains exactly one vertical lattice segment of weight $\frac{1}{2}$) and all paths in $\mathcal{B}$ have weight $1$ (since they consist of all lattice segments with weight $1$). For example, all lattice paths in Figure \ref{halfhex16}(c) have weight $1$, except for the third path $\lambda_3$ that  has weight $\frac{1}{2}$, since its last segment has weight $\frac{1}{2}$. By counting the paths in the sets $\mathcal{A}$ and $\mathcal{B}$, we have
\begin{align}
a_{i,j}&=\frac{1}{2}\binom{a_i+j-3}{2j-3}+\binom{a_i+j-3}{2j-2}\\
&=\frac{a_i-1}{(2j-2)!}(a_i+j-3)(a_i+j-4)\dotsc(a_i-j+1).
\end{align}

\medskip

From the above expression of $a_{i,j}$, we can factor out $\frac{1}{(2j-2)!}$ from the $j$-th column of the matrix $A$. Next, we interchange the $j$-th and the $(n-j+1)$-th columns of the resulting matrix. We get a new matrix $B$ and
\begin{equation}
\det A= \frac{(-1)^{n(n-1)/2}}{2!4!\dotsc(2n-2)!} \det B
\end{equation}
The $(i,j)$-th entry of $B$ is
\begin{equation}
b_{i,j}=(a_i-1) (a_i-n+j)(a_i-n+j+1)\dotsc(a_i+n-j-2).
\end{equation}
We can rewrite $b_{i,j}$ as
\begin{equation}
(X_i-A_n-C)(X_i-A_{n-1}-C)\dotsc(X_i-A_{j+1}-C) \times (X_i+A_n)(X_i+A_{n-1})\dotsc(X_i+A_{j+1}),
\end{equation}
where $X_i=a_i$, $A_{j}=n-j-1$, and $C=2$.

Apply Krattenthaler's determinant identity (Identity (2.10) in \cite[Lemma 4]{Krat2}), we get
\begin{align}
\det B&= \det [(X_i-A_n-C)\dotsc(X_i-A_{j+1}-C) \times (X_i+A_n)\dotsc(X_i+A_{j+1})]_{n\times n}\notag\\
&=\prod_{1\leq i<j\leq n}(X_j-X_i)(C-X_i-X_j)\notag\\
&=(-1)^{n(n-1)/2}\prod_{1\leq i<j\leq n}(a_j-a_i)(a_i+a_j-2).
\end{align}
Thus, we obtain
\begin{equation}
\det A=\frac{1}{2!4!\dotsc(2n-2)!}\prod_{1\leq i<j\leq n}(a_j-a_i)(a_i+a_j-2).
\end{equation}
This implies  (\ref{(d)}) and finishes our proof.
\end{proof}

It is worth noticing that the author gave another proof for the equalities  (\ref{(a)}),  (\ref{(b)}), and  (\ref{(c)}) in \cite{Lai3} by using Kuo condensation. We can also prove (\ref{(d)}) by using the same method.

\begin{figure}\centering
\setlength{\unitlength}{3947sp}%
\begingroup\makeatletter\ifx\SetFigFont\undefined%
\gdef\SetFigFont#1#2#3#4#5{%
  \reset@font\fontsize{#1}{#2pt}%
  \fontfamily{#3}\fontseries{#4}\fontshape{#5}%
  \selectfont}%
\fi\endgroup%
\resizebox{8cm}{!}{
\begin{picture}(0,0)%
\includegraphics{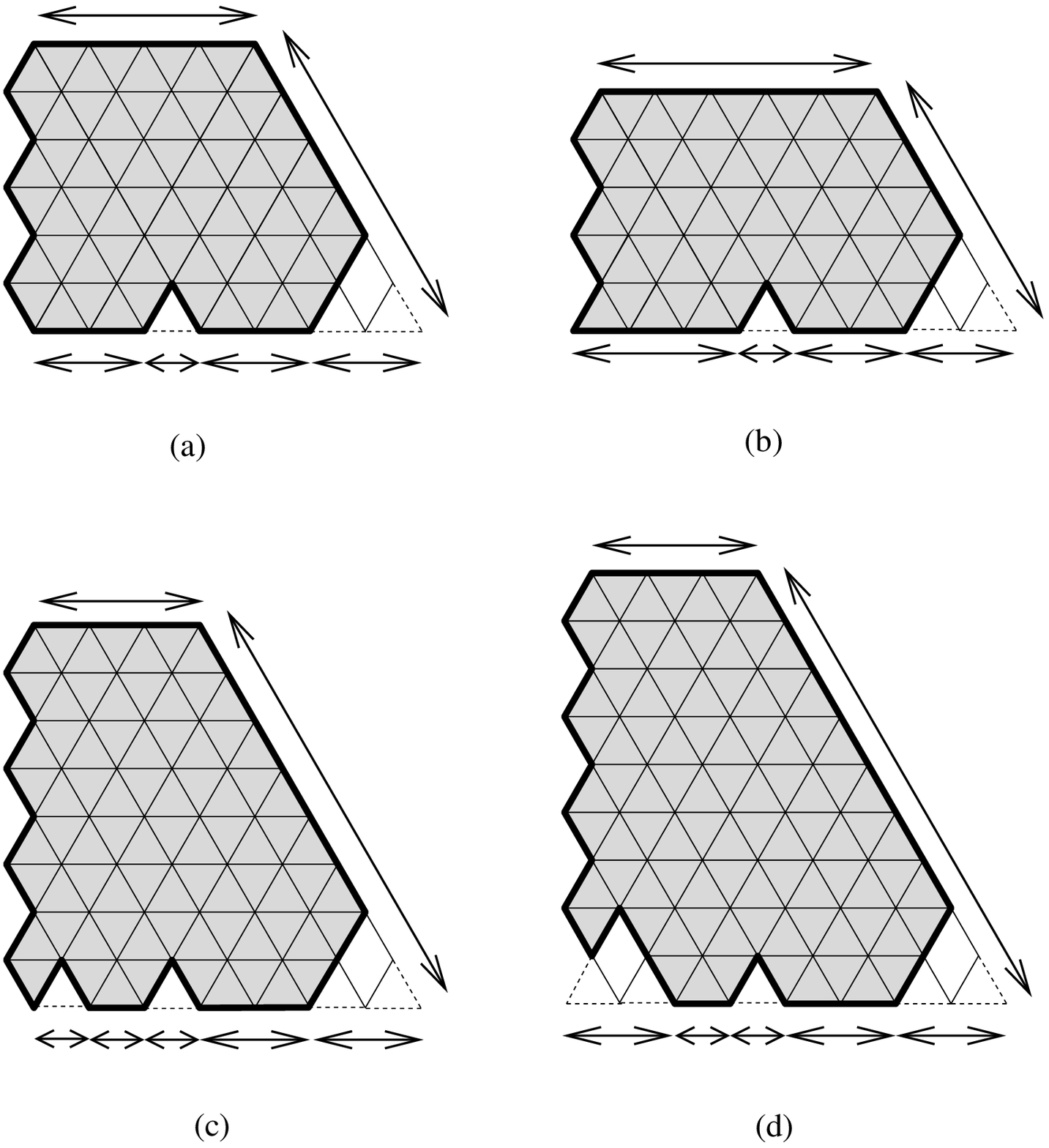}%
\end{picture}%
%
%

\begin{picture}(7577,8368)(1905,-8210)
\put(2566,-96){\makebox(0,0)[lb]{\smash{{\SetFigFont{14}{16.8}{\rmdefault}{\mddefault}{\updefault}{$t_1+t_3$}%
}}}}
\put(2259,-2919){\makebox(0,0)[lb]{\smash{{\SetFigFont{14}{16.8}{\rmdefault}{\mddefault}{\updefault}{$t_1$}%
}}}}
\put(2919,-2911){\makebox(0,0)[lb]{\smash{{\SetFigFont{14}{16.8}{\rmdefault}{\mddefault}{\updefault}{$t_2$}%
}}}}
\put(3526,-2911){\makebox(0,0)[lb]{\smash{{\SetFigFont{14}{16.8}{\rmdefault}{\mddefault}{\updefault}{$t_3$}%
}}}}
\put(2206,-7741){\makebox(0,0)[lb]{\smash{{\SetFigFont{14}{16.8}{\rmdefault}{\mddefault}{\updefault}{$t_2$}%
}}}}
\put(2619,-7726){\makebox(0,0)[lb]{\smash{{\SetFigFont{14}{16.8}{\rmdefault}{\mddefault}{\updefault}{$t_3$}%
}}}}
\put(2409,-4156){\makebox(0,0)[lb]{\smash{{\SetFigFont{14}{16.8}{\rmdefault}{\mddefault}{\updefault}{$t_3+t_5$}%
}}}}
\put(6286,-2911){\makebox(0,0)[lb]{\smash{{\SetFigFont{14}{16.8}{\rmdefault}{\mddefault}{\updefault}{$t_1$}%
}}}}
\put(7134,-2911){\makebox(0,0)[lb]{\smash{{\SetFigFont{14}{16.8}{\rmdefault}{\mddefault}{\updefault}{$t_2$}%
}}}}
\put(7719,-2904){\makebox(0,0)[lb]{\smash{{\SetFigFont{14}{16.8}{\rmdefault}{\mddefault}{\updefault}{$t_3$}%
}}}}
\put(6810,-420){\makebox(0,0)[lb]{\smash{{\SetFigFont{14}{16.8}{\rmdefault}{\mddefault}{\updefault}{$t_1+t_3$}%
}}}}
\put(6084,-7681){\makebox(0,0)[lb]{\smash{{\SetFigFont{14}{16.8}{\rmdefault}{\mddefault}{\updefault}{$t_2$}%
}}}}
\put(6706,-7681){\makebox(0,0)[lb]{\smash{{\SetFigFont{14}{16.8}{\rmdefault}{\mddefault}{\updefault}{$t_3$}%
}}}}
\put(6202,-3846){\makebox(0,0)[lb]{\smash{{\SetFigFont{14}{16.8}{\rmdefault}{\mddefault}{\updefault}{$t_3+t_5$}%
}}}}
\put(4291,-2911){\makebox(0,0)[lb]{\smash{{\SetFigFont{14}{16.8}{\rmdefault}{\mddefault}{\updefault}{$t_4$}%
}}}}
\put(8484,-2904){\makebox(0,0)[lb]{\smash{{\SetFigFont{14}{16.8}{\rmdefault}{\mddefault}{\updefault}{$t_4$}%
}}}}
\put(2986,-7719){\makebox(0,0)[lb]{\smash{{\SetFigFont{14}{16.8}{\rmdefault}{\mddefault}{\updefault}{$t_4$}%
}}}}
\put(7126,-7674){\makebox(0,0)[lb]{\smash{{\SetFigFont{14}{16.8}{\rmdefault}{\mddefault}{\updefault}{$t_4$}%
}}}}
\put(3481,-7711){\makebox(0,0)[lb]{\smash{{\SetFigFont{14}{16.8}{\rmdefault}{\mddefault}{\updefault}{$t_5$}%
}}}}
\put(4216,-7704){\makebox(0,0)[lb]{\smash{{\SetFigFont{14}{16.8}{\rmdefault}{\mddefault}{\updefault}{$t_6$}%
}}}}
\put(7606,-7678){\makebox(0,0)[lb]{\smash{{\SetFigFont{14}{16.8}{\rmdefault}{\mddefault}{\updefault}{$t_5$}%
}}}}
\put(8356,-7678){\makebox(0,0)[lb]{\smash{{\SetFigFont{14}{16.8}{\rmdefault}{\mddefault}{\updefault}{$t_6$}%
}}}}
\put(4282,-721){\rotatebox{300.0}{\makebox(0,0)[lb]{\smash{{\SetFigFont{14}{16.8}{\rmdefault}{\mddefault}{\updefault}{$2t_2+2t_4$}%
}}}}}
\put(8653,-967){\rotatebox{300.0}{\makebox(0,0)[lb]{\smash{{\SetFigFont{14}{16.8}{\rmdefault}{\mddefault}{\updefault}{$2t_2+2t_4$}%
}}}}}
\put(3914,-4801){\rotatebox{300.0}{\makebox(0,0)[lb]{\smash{{\SetFigFont{14}{16.8}{\rmdefault}{\mddefault}{\updefault}{$2t_2+2t_4+2t_6-1$}%
}}}}}
\put(7892,-4537){\rotatebox{300.0}{\makebox(0,0)[lb]{\smash{{\SetFigFont{14}{16.8}{\rmdefault}{\mddefault}{\updefault}{$2t_2+2t_4+2t_6-1$}%
}}}}}
\end{picture}%
}
  \caption{Obtain $\mathcal{Q}$- and $\mathcal{K}$-type regions from quartered hexagons by removing forced lozenges.}\label{halfhex3}
\end{figure}

\begin{proof}[Proof  of Lemma \ref{QAR}]
By removing vertical forced lozenges from the quartered hexagon $L_{2\E(\textbf{t}),\Od(\textbf{t})}(\textbf{I}),$ where
\[\textbf{I}:=\bigcup_{i=1}^{l}\left[\s_{2i-1}(\textbf{t})+1,\s_{2i}(\textbf{t})\right],\]
 we get the the region $\mathcal{Q}(t_1,t_2,\dotsc,t_{2l})$ (see Figures \ref{halfhex3}(a) and (c) for examples). Since all the forced lozenges have weight $1$, (\ref{QARa}) follows from (\ref{(a)}).

Similarly,  the region $\mathcal{Q}'_{a,b}(t_1,t_2,\dotsc,t_{2l})$ is also obtained by removing forced lozenges from the region
$\overline{L}_{2\E(\textbf{t}),\Od(\textbf{t})}(\textbf{I}).$
 Thus,  (\ref{QARb}) follows from (\ref{(c)}).

By the same arguments, we get  (\ref{QARc}) and (\ref{QARd}) from  (\ref{(b)}) and (\ref{(d)}), respectively.
\end{proof}

\section{Proof of the main theorems}\label{Proofmain}
We only prove Theorem \ref{main1}, as  Theorem \ref{main2} can be treated in the same way.

\begin{proof}[Proof of Theorem \ref{main1}]

We will prove both (\ref{ee}) and (\ref{oe})  by induction on $x+y+n$.

The base cases  for (\ref{ee}) are the situations: $x=0$, $y=1$, and $n=0$.

When $n=0$,  (\ref{ee}) follows directly from Corollary \ref{Proctiling}.

\begin{figure}\centering
\setlength{\unitlength}{3947sp}%
\begingroup\makeatletter\ifx\SetFigFont\undefined%
\gdef\SetFigFont#1#2#3#4#5{%
  \reset@font\fontsize{#1}{#2pt}%
  \fontfamily{#3}\fontseries{#4}\fontshape{#5}%
  \selectfont}%
\fi\endgroup%
\resizebox{12cm}{!}{
\begin{picture}(0,0)%
\includegraphics{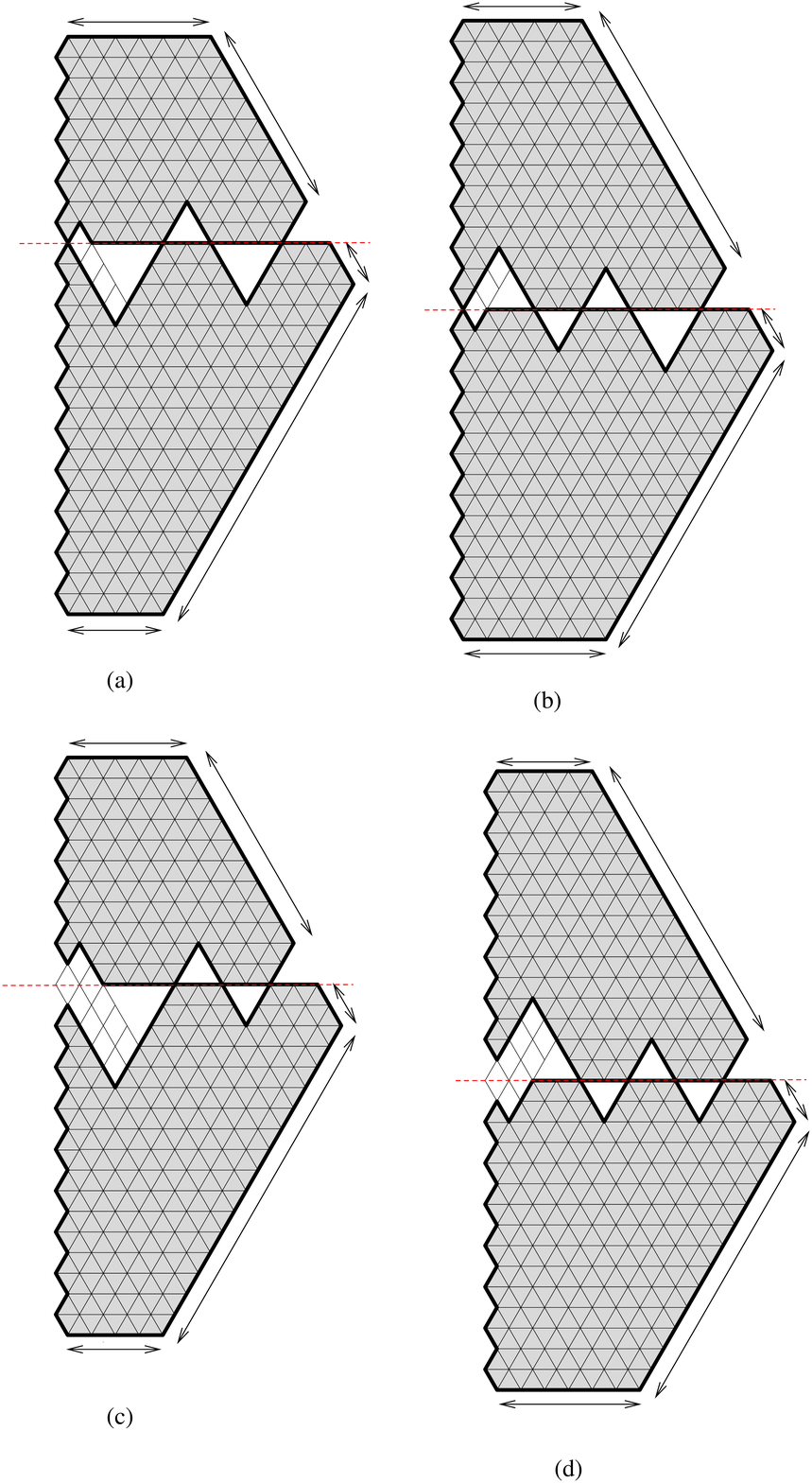}%
\end{picture}%
%
%

\begin{picture}(13408,24743)(1826,-23959)
\put(6781,-3471){\makebox(0,0)[lb]{\smash{{\SetFigFont{20}{24.0}{\rmdefault}{\mddefault}{\updefault}{$a_1$}%
}}}}
\put(5611,-4031){\makebox(0,0)[lb]{\smash{{\SetFigFont{20}{24.0}{\rmdefault}{\mddefault}{\updefault}{$a_2$}%
}}}}
\put(4691,-3541){\makebox(0,0)[lb]{\smash{{\SetFigFont{20}{24.0}{\rmdefault}{\mddefault}{\updefault}{$a_3$}%
}}}}
\put(3641,-4021){\makebox(0,0)[lb]{\smash{{\SetFigFont{20}{24.0}{\rmdefault}{\mddefault}{\updefault}{$a_4$}%
}}}}
\put(13577,-4576){\makebox(0,0)[lb]{\smash{{\SetFigFont{20}{24.0}{\rmdefault}{\mddefault}{\updefault}{$a_1$}%
}}}}
\put(12447,-5126){\makebox(0,0)[lb]{\smash{{\SetFigFont{20}{24.0}{\rmdefault}{\mddefault}{\updefault}{$a_2$}%
}}}}
\put(11537,-4636){\makebox(0,0)[lb]{\smash{{\SetFigFont{20}{24.0}{\rmdefault}{\mddefault}{\updefault}{$a_3$}%
}}}}
\put(10757,-4996){\makebox(0,0)[lb]{\smash{{\SetFigFont{20}{24.0}{\rmdefault}{\mddefault}{\updefault}{$a_4$}%
}}}}
\put(9942,-4601){\makebox(0,0)[lb]{\smash{{\SetFigFont{20}{24.0}{\rmdefault}{\mddefault}{\updefault}{$a_5$}%
}}}}
\put(13896,-17224){\makebox(0,0)[lb]{\smash{{\SetFigFont{20}{24.0}{\rmdefault}{\mddefault}{\updefault}{$a_1$}%
}}}}
\put(6440,-15660){\makebox(0,0)[lb]{\smash{{\SetFigFont{20}{24.0}{\rmdefault}{\mddefault}{\updefault}{$a_1$}%
}}}}
\put(5660,-16060){\makebox(0,0)[lb]{\smash{{\SetFigFont{20}{24.0}{\rmdefault}{\mddefault}{\updefault}{$a_2$}%
}}}}
\put(13086,-17634){\makebox(0,0)[lb]{\smash{{\SetFigFont{20}{24.0}{\rmdefault}{\mddefault}{\updefault}{$a_2$}%
}}}}
\put(4840,-15680){\makebox(0,0)[lb]{\smash{{\SetFigFont{20}{24.0}{\rmdefault}{\mddefault}{\updefault}{$a_3$}%
}}}}
\put(12256,-17254){\makebox(0,0)[lb]{\smash{{\SetFigFont{20}{24.0}{\rmdefault}{\mddefault}{\updefault}{$a_3$}%
}}}}
\put(11496,-17624){\makebox(0,0)[lb]{\smash{{\SetFigFont{20}{24.0}{\rmdefault}{\mddefault}{\updefault}{$a_4$}%
}}}}
\put(3840,-16130){\makebox(0,0)[lb]{\smash{{\SetFigFont{20}{24.0}{\rmdefault}{\mddefault}{\updefault}{$a_4$}%
}}}}
\put(10716,-17224){\makebox(0,0)[lb]{\smash{{\SetFigFont{20}{24.0}{\rmdefault}{\mddefault}{\updefault}{$a_5$}%
}}}}
\put(7841,-3961){\makebox(0,0)[lb]{\smash{{\SetFigFont{20}{24.0}{\rmdefault}{\mddefault}{\updefault}{$z$}%
}}}}
\put(14632,-5056){\makebox(0,0)[lb]{\smash{{\SetFigFont{20}{24.0}{\rmdefault}{\mddefault}{\updefault}{$z$}%
}}}}
\put(15035,-17735){\makebox(0,0)[lb]{\smash{{\SetFigFont{20}{24.0}{\rmdefault}{\mddefault}{\updefault}{$z$}%
}}}}
\put(7695,-16110){\makebox(0,0)[lb]{\smash{{\SetFigFont{20}{24.0}{\rmdefault}{\mddefault}{\updefault}{$z$}%
}}}}
\put(3521,149){\makebox(0,0)[lb]{\smash{{\SetFigFont{20}{24.0}{\rmdefault}{\mddefault}{\updefault}{$a_2+a_4$}%
}}}}
\put(9717,404){\makebox(0,0)[lb]{\smash{{\SetFigFont{20}{24.0}{\rmdefault}{\mddefault}{\updefault}{$a_2+a_4$}%
}}}}
\put(3210,-11650){\makebox(0,0)[lb]{\smash{{\SetFigFont{20}{24.0}{\rmdefault}{\mddefault}{\updefault}{$a_2+a_4$}%
}}}}
\put(10070,-11950){\makebox(0,0)[lb]{\smash{{\SetFigFont{20}{24.0}{\rmdefault}{\mddefault}{\updefault}{$a_2+a_4$}%
}}}}
\put(5851,-511){\rotatebox{300.0}{\makebox(0,0)[lb]{\smash{{\SetFigFont{20}{24.0}{\rmdefault}{\mddefault}{\updefault}{$y+a_1+2a_3-1$}%
}}}}}
\put(12241,-491){\rotatebox{300.0}{\makebox(0,0)[lb]{\smash{{\SetFigFont{20}{24.0}{\rmdefault}{\mddefault}{\updefault}{$y+a_1+2a_3+2a_5-1$}%
}}}}}
\put(5455,-12085){\rotatebox{300.0}{\makebox(0,0)[lb]{\smash{{\SetFigFont{20}{24.0}{\rmdefault}{\mddefault}{\updefault}{$y+a_1+2a_3-1$}%
}}}}}
\put(12385,-12715){\rotatebox{300.0}{\makebox(0,0)[lb]{\smash{{\SetFigFont{20}{24.0}{\rmdefault}{\mddefault}{\updefault}{$y+a_1+2a_3+2a_5-1$}%
}}}}}
\put(3076,-10351){\makebox(0,0)[lb]{\smash{{\SetFigFont{20}{24.0}{\rmdefault}{\mddefault}{\updefault}{$a_1+a_3$}%
}}}}
\put(3090,-22230){\makebox(0,0)[lb]{\smash{{\SetFigFont{20}{24.0}{\rmdefault}{\mddefault}{\updefault}{$a_1+a_3$}%
}}}}
\put(9646,-10816){\makebox(0,0)[lb]{\smash{{\SetFigFont{20}{24.0}{\rmdefault}{\mddefault}{\updefault}{$a_1+a_3+a_5$}%
}}}}
\put(10030,-23180){\makebox(0,0)[lb]{\smash{{\SetFigFont{20}{24.0}{\rmdefault}{\mddefault}{\updefault}{$a_1+a_3+a_5$}%
}}}}
\put(5747,-8886){\rotatebox{60.0}{\makebox(0,0)[lb]{\smash{{\SetFigFont{20}{24.0}{\rmdefault}{\mddefault}{\updefault}{$y+z+2a_2+2a_4-1$}%
}}}}}
\put(13080,-9116){\rotatebox{60.0}{\makebox(0,0)[lb]{\smash{{\SetFigFont{20}{24.0}{\rmdefault}{\mddefault}{\updefault}{$y+z+2a_2+2a_4-1$}%
}}}}}
\put(5895,-20495){\rotatebox{60.0}{\makebox(0,0)[lb]{\smash{{\SetFigFont{20}{24.0}{\rmdefault}{\mddefault}{\updefault}{$y+z+2a_2+2a_4-1$}%
}}}}}
\put(13531,-21763){\rotatebox{60.0}{\makebox(0,0)[lb]{\smash{{\SetFigFont{20}{24.0}{\rmdefault}{\mddefault}{\updefault}{$y+z+2a_2+2a_4-1$}%
}}}}}
\end{picture}}
  \caption{Splitting a $\mathcal{R}$-type region into two $\mathcal{Q}$-type regions and several forced vertical lozenges in the case when $x=0$.}\label{halfhex4}
\end{figure}

Next we consider the case $x=0$. By Region-splitting Lemma \ref{RS}, we can split the region into two subregions along the lattice line containing the bases of the triangular holes as follows. First we consider the case when $n$ are even, say $n=2l$. The shaded subregion above the dotted line in Figure \ref{halfhex4}(a) satisfies the conditions of Region-splitting Lemma \ref{RS}. Moreover, it is easy to see that this subregion is congruent to the region
$\mathcal{Q}\left( 0,\frac{y-1}{2},a_{2l},\dotsc,a_1\right).$
The complement of this subregion has several forced vertical lozenges. By removing these forced lozenges, we get the region $\mathcal{Q}\left(0,\frac{y-1}{2}+a_{2l},a_{2l-1},\dotsc,a_1,z\right)$ (shown as the lower shaded subregion in Figure \ref{halfhex4}(a) for the case $l=2$, $z=2$, $a_1=a_3=2$, $a_2=a_4=3$). Thus, we have for odd $y$
\begin{equation}
\M(\mathcal{R}_{0,y,z}(a_1,a_2,\dotsc,a_{2l}))=\M\left(\mathcal{Q}\left( 0,\frac{y}{2},a_{2l},\dotsc,a_1\right)\right)\M\left(\mathcal{Q}\left(0,\frac{y}{2}+a_{2l},a_{2l-1},\dotsc,a_1,z\right)\right).
\end{equation}

Similarly, as shown in Figure \ref{halfhex4}(b), we also have
\begin{equation}
\M(\mathcal{R}_{0,y,z}(a_1,a_2,\dotsc,a_{2l+1}))=\M\left(\mathcal{Q}\left( 0,\frac{y-1}{2}+a_{2l+1},\dotsc,a_1\right)\right)\M\left(\mathcal{Q}\left(0,\frac{y-1}{2},a_{2l+1},\dotsc,a_1,z\right)\right).
\end{equation}
Then (\ref{ee}) follows from Lemma \ref{QAR}.

\begin{figure}\centering
\setlength{\unitlength}{3947sp}%
\begingroup\makeatletter\ifx\SetFigFont\undefined%
\gdef\SetFigFont#1#2#3#4#5{%
  \reset@font\fontsize{#1}{#2pt}%
  \fontfamily{#3}\fontseries{#4}\fontshape{#5}%
  \selectfont}%
\fi\endgroup%
\resizebox{10cm}{!}{
 \begin{picture}(0,0)%
\includegraphics{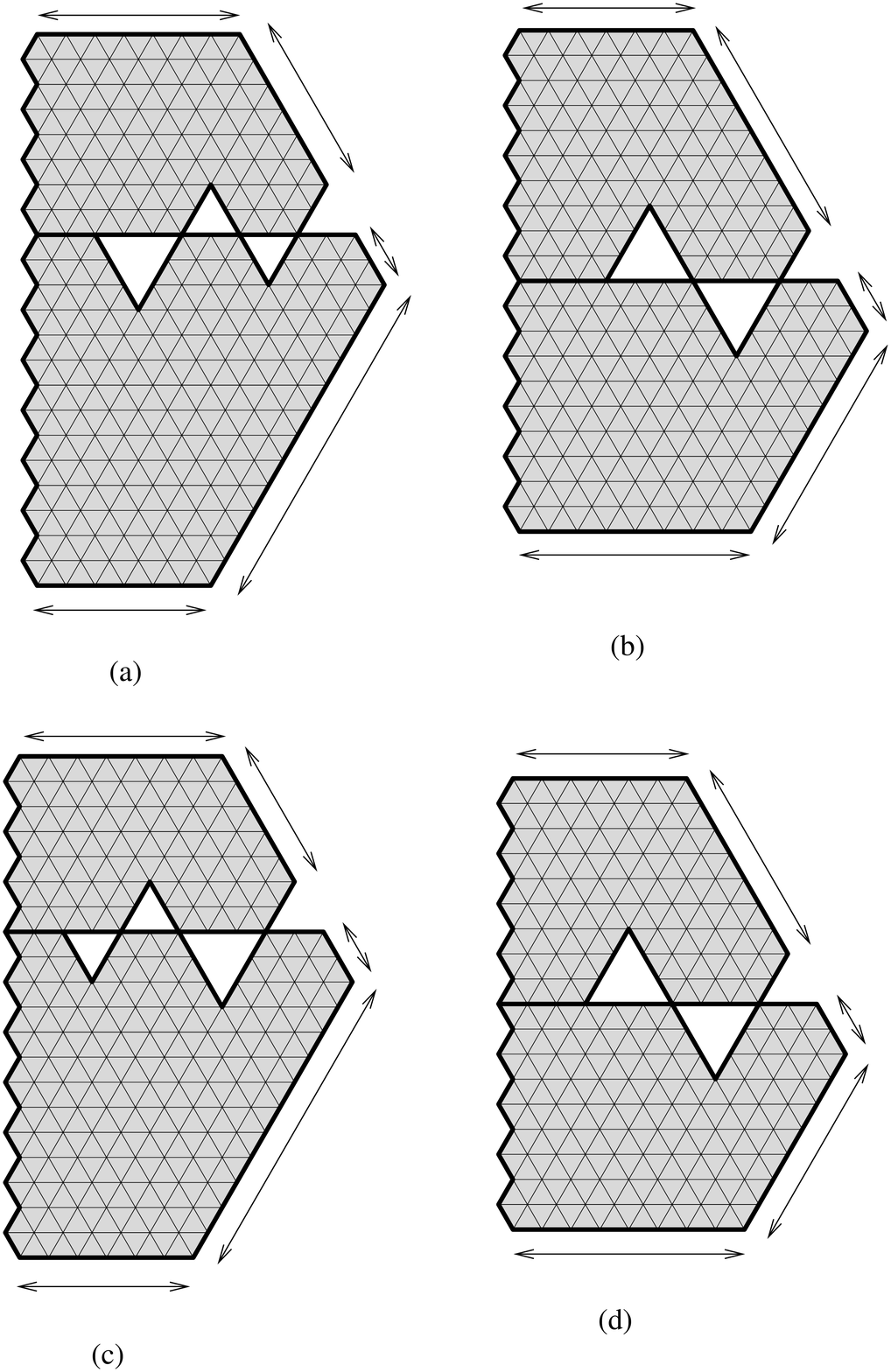}%
\end{picture}%
%
%

\begin{picture}(12068,18908)(1835,-18274)
\put(11866,-10546){\rotatebox{300.0}{\makebox(0,0)[lb]{\smash{{\SetFigFont{20}{24.0}{\rmdefault}{\mddefault}{\updefault}{$a_1+2a_2-1$}%
}}}}}
\put(5430,-9886){\rotatebox{300.0}{\makebox(0,0)[lb]{\smash{{\SetFigFont{20}{24.0}{\rmdefault}{\mddefault}{\updefault}{$a_1+2a_2-1$}%
}}}}}
\put(5491,-16366){\rotatebox{60.0}{\makebox(0,0)[lb]{\smash{{\SetFigFont{20}{24.0}{\rmdefault}{\mddefault}{\updefault}{$z+2a_2+2a_4-1$}%
}}}}}
\put(12815,-16115){\rotatebox{60.0}{\makebox(0,0)[lb]{\smash{{\SetFigFont{20}{24.0}{\rmdefault}{\mddefault}{\updefault}{$z+2a_2-1$}%
}}}}}
\put(12961,-6556){\rotatebox{60.0}{\makebox(0,0)[lb]{\smash{{\SetFigFont{20}{24.0}{\rmdefault}{\mddefault}{\updefault}{$z+2a_2$}%
}}}}}
\put(5986,-6961){\rotatebox{60.0}{\makebox(0,0)[lb]{\smash{{\SetFigFont{20}{24.0}{\rmdefault}{\mddefault}{\updefault}{$z+2a_2+2a_4$}%
}}}}}
\put(11941,-631){\rotatebox{300.0}{\makebox(0,0)[lb]{\smash{{\SetFigFont{20}{24.0}{\rmdefault}{\mddefault}{\updefault}{$a_1+2a_3$}%
}}}}}
\put(5760,-376){\rotatebox{300.0}{\makebox(0,0)[lb]{\smash{{\SetFigFont{20}{24.0}{\rmdefault}{\mddefault}{\updefault}{$a_1+2a_3$}%
}}}}}
\put(6796,-12496){\makebox(0,0)[lb]{\smash{{\SetFigFont{20}{24.0}{\rmdefault}{\mddefault}{\updefault}{$z$}%
}}}}
\put(13383,-13449){\makebox(0,0)[lb]{\smash{{\SetFigFont{20}{24.0}{\rmdefault}{\mddefault}{\updefault}{$z$}%
}}}}
\put(13704,-3721){\makebox(0,0)[lb]{\smash{{\SetFigFont{20}{24.0}{\rmdefault}{\mddefault}{\updefault}{$z$}%
}}}}
\put(7141,-3031){\makebox(0,0)[lb]{\smash{{\SetFigFont{20}{24.0}{\rmdefault}{\mddefault}{\updefault}{$z$}%
}}}}
\put(9340,-17049){\makebox(0,0)[lb]{\smash{{\SetFigFont{20}{24.0}{\rmdefault}{\mddefault}{\updefault}{$x+a_1+a_3$}%
}}}}
\put(2346,-17630){\makebox(0,0)[lb]{\smash{{\SetFigFont{20}{24.0}{\rmdefault}{\mddefault}{\updefault}{$x+a_1+a_3$}%
}}}}
\put(9511,-7711){\makebox(0,0)[lb]{\smash{{\SetFigFont{20}{24.0}{\rmdefault}{\mddefault}{\updefault}{$x+a_1+a_3$}%
}}}}
\put(2716,-8356){\makebox(0,0)[lb]{\smash{{\SetFigFont{20}{24.0}{\rmdefault}{\mddefault}{\updefault}{$x+a_1+a_3$}%
}}}}
\put(2511,-9550){\makebox(0,0)[lb]{\smash{{\SetFigFont{20}{24.0}{\rmdefault}{\mddefault}{\updefault}{$x+a_2+a_4$}%
}}}}
\put(9455,-9710){\makebox(0,0)[lb]{\smash{{\SetFigFont{20}{24.0}{\rmdefault}{\mddefault}{\updefault}{$x+a_2$}%
}}}}
\put(9436,254){\makebox(0,0)[lb]{\smash{{\SetFigFont{20}{24.0}{\rmdefault}{\mddefault}{\updefault}{$x+a_2$}%
}}}}
\put(2881,164){\makebox(0,0)[lb]{\smash{{\SetFigFont{20}{24.0}{\rmdefault}{\mddefault}{\updefault}{$x+a_2+a_4$}%
}}}}
\put(11208,-13704){\makebox(0,0)[lb]{\smash{{\SetFigFont{20}{24.0}{\rmdefault}{\mddefault}{\updefault}{$a_2$}%
}}}}
\put(4529,-12695){\makebox(0,0)[lb]{\smash{{\SetFigFont{20}{24.0}{\rmdefault}{\mddefault}{\updefault}{$a_2$}%
}}}}
\put(10053,-13149){\makebox(0,0)[lb]{\smash{{\SetFigFont{20}{24.0}{\rmdefault}{\mddefault}{\updefault}{$a_3$}%
}}}}
\put(3644,-12230){\makebox(0,0)[lb]{\smash{{\SetFigFont{20}{24.0}{\rmdefault}{\mddefault}{\updefault}{$a_3$}%
}}}}
\put(2879,-12605){\makebox(0,0)[lb]{\smash{{\SetFigFont{20}{24.0}{\rmdefault}{\mddefault}{\updefault}{$a_4$}%
}}}}
\put(5654,-12200){\makebox(0,0)[lb]{\smash{{\SetFigFont{20}{24.0}{\rmdefault}{\mddefault}{\updefault}{$a_1$}%
}}}}
\put(12348,-13164){\makebox(0,0)[lb]{\smash{{\SetFigFont{20}{24.0}{\rmdefault}{\mddefault}{\updefault}{$a_1$}%
}}}}
\put(12600,-3405){\makebox(0,0)[lb]{\smash{{\SetFigFont{20}{24.0}{\rmdefault}{\mddefault}{\updefault}{$a_1$}%
}}}}
\put(11535,-3900){\makebox(0,0)[lb]{\smash{{\SetFigFont{20}{24.0}{\rmdefault}{\mddefault}{\updefault}{$a_2$}%
}}}}
\put(10260,-3420){\makebox(0,0)[lb]{\smash{{\SetFigFont{20}{24.0}{\rmdefault}{\mddefault}{\updefault}{$a_3$}%
}}}}
\put(3437,-3248){\makebox(0,0)[lb]{\smash{{\SetFigFont{20}{24.0}{\rmdefault}{\mddefault}{\updefault}{$a_4$}%
}}}}
\put(4472,-2798){\makebox(0,0)[lb]{\smash{{\SetFigFont{20}{24.0}{\rmdefault}{\mddefault}{\updefault}{$a_3$}%
}}}}
\put(5311,-3181){\makebox(0,0)[lb]{\smash{{\SetFigFont{20}{24.0}{\rmdefault}{\mddefault}{\updefault}{$a_2$}%
}}}}
\put(6107,-2828){\makebox(0,0)[lb]{\smash{{\SetFigFont{20}{24.0}{\rmdefault}{\mddefault}{\updefault}{$a_1$}%
}}}}
\end{picture}%
}
  \caption{(a)--(b) Splitting a $\mathcal{R}$-type region into two $\mathcal{Q}$-type regions in the case when $y=1$. (c)--(d) Splitting a $\mathcal{R}$-type region into two $\mathcal{K}$-type regions in the case when $y=0$.}\label{halfhex5}
\end{figure}

The last base case of (\ref{ee}) is the case when $y=1$. By  Region-splitting Lemma \ref{RS} again, we can partition our region into two parts along the horizontal line containing the bases of the triangular holes and obtain
\begin{equation}
\M(\mathcal{R}_{x,1,z}(a_1,a_2,\dotsc,a_{2l}))=\M(\mathcal{Q}(x+a_{2l},a_{2l-1},\dotsc,a_1))\M(\mathcal{Q}(x,a_{2l},\dotsc,a_1,z))
\end{equation}
(see Figure \ref{halfhex5}(a) for the case $x=2$, $l=2$, $z=2$, $a_1=a_2=a_3=2$, $a_4=3$), and
\begin{equation}
\M(\mathcal{R}_{x,1,z}(a_1,a_2,\dotsc,a_{2l+1}))=\M(\mathcal{Q}(x,a_{2l+1},\dotsc,a_1))\M(\mathcal{Q}(x+a_{2l+1},a_{2l}\dotsc,a_1,z))
\end{equation}
(see Figure \ref{halfhex5}(b) for the case $x=2$, $l=1$, $z=2$, $a_1=2$ $a_2=a_3=3$). Then (\ref{ee}) also follows from Lemma \ref{QAR}.

\medskip

Next, we consider the base cases of (\ref{oe}), that are the situations when one of the parameters $x,y,n$ is equal to $0$.

The case $n=0$ still follows from Corollary \ref{Proctiling}.

If $x=0$, we apply the same arguments in the base case of (\ref{ee}) and have for even $y$
\begin{equation}
\M(\mathcal{R}_{0,y,z}(a_1,a_2,\dotsc,a_{2l}))=\M\left(\mathcal{K}\left( 0,\frac{y}{2},a_{2l},\dotsc,a_1\right)\right)\M\left(\mathcal{K}\left(0,\frac{y}{2}+a_{2l},a_{2l-1},\dotsc,a_1,z\right)\right)
\end{equation}
and
\begin{equation}
\M(\mathcal{R}_{0,y,z}(a_1,a_2,\dotsc,a_{2l+1}))=\M\left(\mathcal{K}\left( 0,\frac{y}{2}+a_{2l+1},\dotsc,a_1\right)\right)\M\left(\mathcal{K}\left(0,\frac{y}{2},a_{2l+1},\dotsc,a_1,z\right)\right)
\end{equation}
(see Figures \ref{halfhex4}(c) and (d), respectively).
Then (\ref{oe}) follows from Lemma \ref{QAR}.

The last base case of (\ref{oe}) is the case when $y=0$. Similar to the case $y=1$ in (\ref{ee}), we divide our region in two $\mathcal{K}$-type regions along the line containing the bases of the triangular holes. By Region-splitting Lemma \ref{RS}, we have
\begin{equation}
\M(\mathcal{R}_{x,0,z}(a_1,a_2,\dotsc,a_{2l}))=\M(\mathcal{K}(x+a_{2l},a_{2l-1},\dotsc,a_1))\M(\mathcal{K}(x,a_{2l},\dotsc,a_1,z))
\end{equation}
(see Figure \ref{halfhex5}(c) for the case $x=2$, $l=2$, $z=2$, $a_1=a_3=a_4=2$, $a_2=3$), and
\begin{equation}
\M(\mathcal{R}_{x,0,z}(a_1,a_2,\dotsc,a_{2l+1}))=\M(\mathcal{K}(x,a_{2l+1},\dotsc,a_1))\M(\mathcal{K}(x+a_{2l+1},a_{2l}\dotsc,a_1,z))
\end{equation}
(see Figure \ref{halfhex5}(d) for the case $x=2$, $l=1$, $z=2$, $a_1=2$ $a_2=a_3=3$). By Lemma \ref{QAR}, we obtain  (\ref{oe}).

\bigskip

For the induction step, we assume that $x>0$, $y>1,$ $n>0$ and that  (\ref{ee}) and (\ref{oe}) both hold for any $\mathcal{R}$-type regions with the sum of their $x$-, $y$-, and $n$-parameters strictly less than $x+y+n$.

\begin{figure}
  \centering
  \setlength{\unitlength}{3947sp}%
\begingroup\makeatletter\ifx\SetFigFont\undefined%
\gdef\SetFigFont#1#2#3#4#5{%
  \reset@font\fontsize{#1}{#2pt}%
  \fontfamily{#3}\fontseries{#4}\fontshape{#5}%
  \selectfont}%
\fi\endgroup%
\resizebox{12cm}{!}{
\begin{picture}(0,0)%
\includegraphics{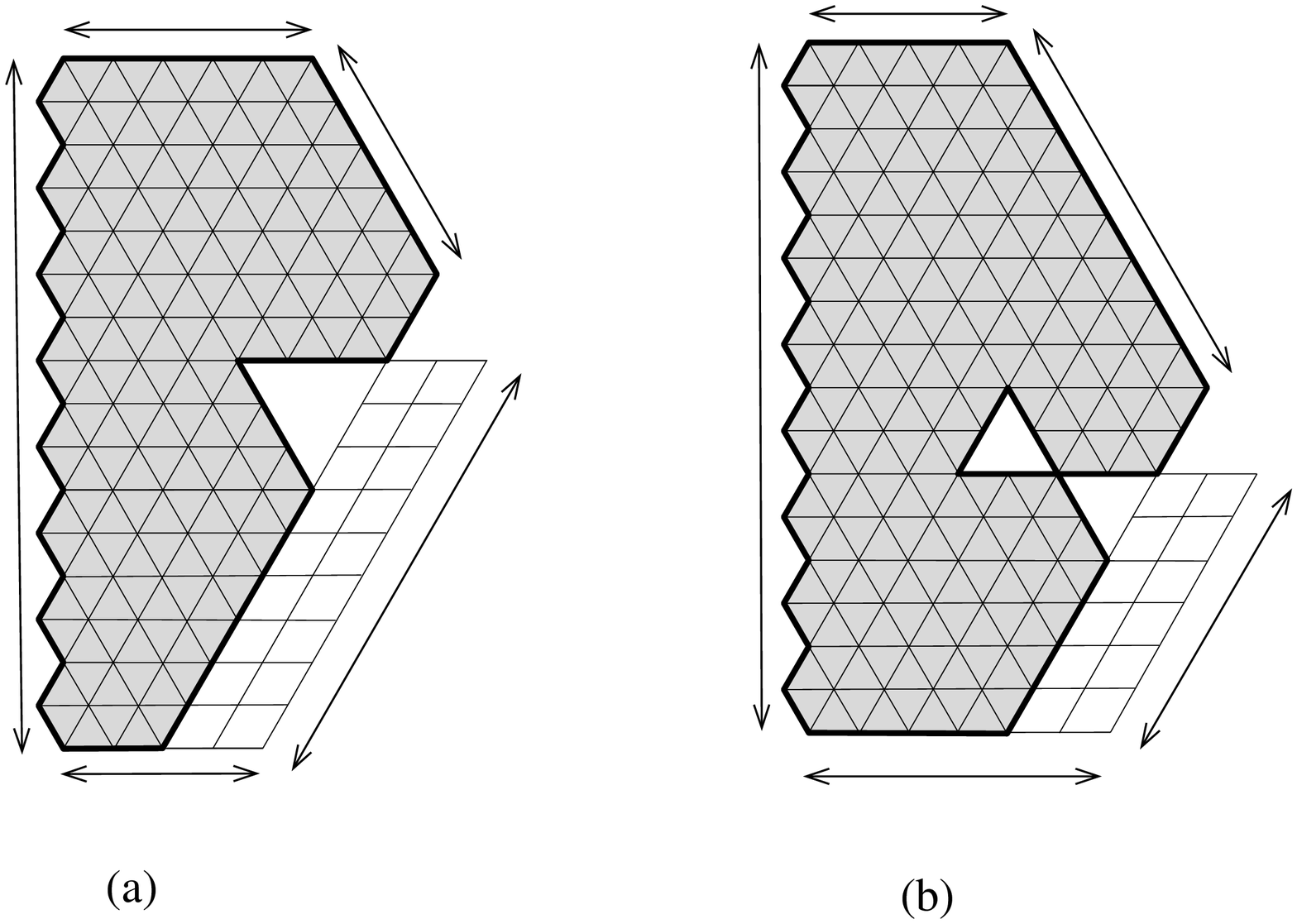}%
\end{picture}%
\begin{picture}(10606,7704)(2839,-8027)
\put(6552,-3404){\makebox(0,0)[lb]{\smash{{\SetFigFont{17}{20.4}{\rmdefault}{\mddefault}{\itdefault}{$a_1$}%
}}}}
\put(5381,-3921){\makebox(0,0)[lb]{\smash{{\SetFigFont{17}{20.4}{\rmdefault}{\mddefault}{\itdefault}{$a_2$}%
}}}}
\put(11651,-4751){\makebox(0,0)[lb]{\smash{{\SetFigFont{17}{20.4}{\rmdefault}{\mddefault}{\itdefault}{$a_2$}%
}}}}
\put(12510,-4351){\makebox(0,0)[lb]{\smash{{\SetFigFont{17}{20.4}{\rmdefault}{\mddefault}{\itdefault}{$a_1$}%
}}}}
\put(9636,-7221){\makebox(0,0)[lb]{\smash{{\SetFigFont{17}{20.4}{\rmdefault}{\mddefault}{\itdefault}{$x+a_1+a_3$}%
}}}}
\put(11778,-1394){\rotatebox{300.0}{\makebox(0,0)[lb]{\smash{{\SetFigFont{17}{20.4}{\rmdefault}{\mddefault}{\itdefault}{$y+a_1+2a_3$}%
}}}}}
\put(6462,-1724){\rotatebox{300.0}{\makebox(0,0)[lb]{\smash{{\SetFigFont{17}{20.4}{\rmdefault}{\mddefault}{\itdefault}{$y+a_1$}%
}}}}}
\put(4332,-689){\makebox(0,0)[lb]{\smash{{\SetFigFont{17}{20.4}{\rmdefault}{\mddefault}{\itdefault}{$x+a_2$}%
}}}}
\put(9765,-646){\makebox(0,0)[lb]{\smash{{\SetFigFont{17}{20.4}{\rmdefault}{\mddefault}{\itdefault}{$x+a_2$}%
}}}}
\put(6306,-5866){\rotatebox{60.0}{\makebox(0,0)[lb]{\smash{{\SetFigFont{17}{20.4}{\rmdefault}{\mddefault}{\itdefault}{$y+2a_2$}%
}}}}}
\put(3101,-4806){\rotatebox{90.0}{\makebox(0,0)[lb]{\smash{{\SetFigFont{17}{20.4}{\rmdefault}{\mddefault}{\itdefault}{$y+a_1+a_2$}%
}}}}}
\put(3871,-7196){\makebox(0,0)[lb]{\smash{{\SetFigFont{17}{20.4}{\rmdefault}{\mddefault}{\itdefault}{$x+a_1$}%
}}}}
\put(10886,-4386){\makebox(0,0)[lb]{\smash{{\SetFigFont{17}{20.4}{\rmdefault}{\mddefault}{\itdefault}{$a_3$}%
}}}}
\put(12751,-6346){\rotatebox{60.0}{\makebox(0,0)[lb]{\smash{{\SetFigFont{17}{20.4}{\rmdefault}{\mddefault}{\itdefault}{$y+2a_2$}%
}}}}}
\put(8951,-5286){\rotatebox{90.0}{\makebox(0,0)[lb]{\smash{{\SetFigFont{17}{20.4}{\rmdefault}{\mddefault}{\itdefault}{$y+a_1+a_2+a_3$}%
}}}}}
\end{picture}}
  \caption{Obtaining a $\mathcal{R}$-type region with less triangular holes by removing forced lozenges when $z=0$.}\label{halfhex12}
\end{figure}

If $z=0$, then, by removing forced lozenges, we obtain a new $\mathcal{R}$-type region (reflected over a horizontal line) with less triangular holes. In particular, we have
 \begin{align}
\M(\mathcal{R}_{x,y,z}(a_1,a_2,\dotsc,a_{2l}))=\M(\mathcal{R}_{x,y,a_{1}}(a_2,\dotsc,a_{2l}))
\end{align}
and
\begin{align}
\M(\mathcal{R}_{x,y,z}(a_1,a_2,\dotsc,a_{2l+1}))=\M(\mathcal{R}_{x,y,a_{1}}(a_2,\dotsc,a_{2l+1}))
\end{align}
(see Figure \ref{halfhex12}).
Then both (\ref{ee}) and (\ref{oe}) follows from the induction hypothesis. Thus, we can assume that $z>0$ in the rest of this proof.

\medskip

Next, we will use Kuo condensation to obtain two recurrences on the tiling numbers of $\mathcal{R}$-type regions.

\medskip

To obtain the first recurrence,  we apply Kuo's Theorem \ref{kuothm1} to the dual graph $G$ of the region $\mathcal{R}_{x,y,z}(a_1,a_2,\dotsc,a_{2l})$ with the four vertices $u,v,w,s$ corresponding to the four black unit triangles as in Figure \ref{halfhex7}(a).

\begin{figure}
  \centering
  \setlength{\unitlength}{3947sp}%
\begingroup\makeatletter\ifx\SetFigFont\undefined%
\gdef\SetFigFont#1#2#3#4#5{%
  \reset@font\fontsize{#1}{#2pt}%
  \fontfamily{#3}\fontseries{#4}\fontshape{#5}%
  \selectfont}%
\fi\endgroup%
\resizebox{10cm}{!}{
\begin{picture}(0,0)%
\includegraphics{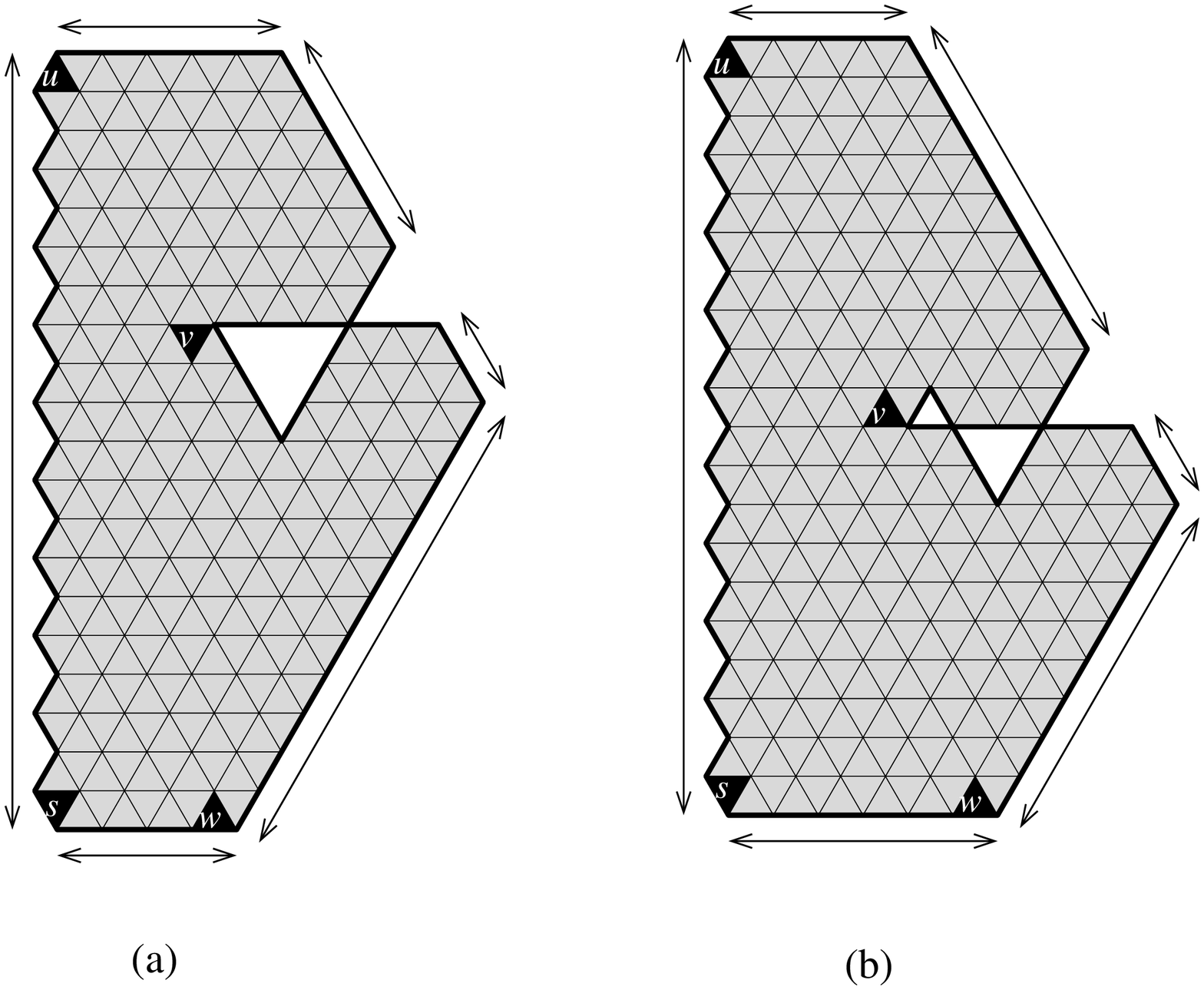}%
\end{picture}%
\begin{picture}(11057,9068)(1872,-9007)
\put(7999,-5628){\rotatebox{90.0}{\makebox(0,0)[lb]{\smash{{\SetFigFont{17}{20.4}{\rmdefault}{\mddefault}{\itdefault}{$y+z+a_1+a_2+a_3-1$}%
}}}}}
\put(2131,-5266){\rotatebox{90.0}{\makebox(0,0)[lb]{\smash{{\SetFigFont{17}{20.4}{\rmdefault}{\mddefault}{\itdefault}{$y+z+a_1+a_2-1$}%
}}}}}
\put(8809,-258){\makebox(0,0)[lb]{\smash{{\SetFigFont{17}{20.4}{\rmdefault}{\mddefault}{\itdefault}{$x+a_2$}%
}}}}
\put(3376,-301){\makebox(0,0)[lb]{\smash{{\SetFigFont{17}{20.4}{\rmdefault}{\mddefault}{\itdefault}{$x+a_2$}%
}}}}
\put(5431,-6796){\rotatebox{60.0}{\makebox(0,0)[lb]{\smash{{\SetFigFont{17}{20.4}{\rmdefault}{\mddefault}{\itdefault}{$y+z+2a_2-1$}%
}}}}}
\put(11809,-7188){\rotatebox{60.0}{\makebox(0,0)[lb]{\smash{{\SetFigFont{17}{20.4}{\rmdefault}{\mddefault}{\itdefault}{$y+z+2a_2-1$}%
}}}}}
\put(12559,-4218){\rotatebox{300.0}{\makebox(0,0)[lb]{\smash{{\SetFigFont{17}{20.4}{\rmdefault}{\mddefault}{\itdefault}{$z$}%
}}}}}
\put(2941,-8341){\makebox(0,0)[lb]{\smash{{\SetFigFont{17}{20.4}{\rmdefault}{\mddefault}{\itdefault}{$x+a_1$}%
}}}}
\put(5596,-3016){\makebox(0,0)[lb]{\smash{{\SetFigFont{17}{20.4}{\rmdefault}{\mddefault}{\itdefault}{$a_1$}%
}}}}
\put(4516,-3556){\makebox(0,0)[lb]{\smash{{\SetFigFont{17}{20.4}{\rmdefault}{\mddefault}{\itdefault}{$a_2$}%
}}}}
\put(10669,-4323){\makebox(0,0)[lb]{\smash{{\SetFigFont{17}{20.4}{\rmdefault}{\mddefault}{\itdefault}{$a_2$}%
}}}}
\put(9700,-4368){\makebox(0,0)[lb]{\smash{{\SetFigFont{17}{20.4}{\rmdefault}{\mddefault}{\itdefault}{$a_3-1$}%
}}}}
\put(11554,-3963){\makebox(0,0)[lb]{\smash{{\SetFigFont{17}{20.4}{\rmdefault}{\mddefault}{\itdefault}{$a_1$}%
}}}}
\put(8884,-8178){\makebox(0,0)[lb]{\smash{{\SetFigFont{17}{20.4}{\rmdefault}{\mddefault}{\itdefault}{$x+a_1+a_3$}%
}}}}
\put(10822,-1006){\rotatebox{300.0}{\makebox(0,0)[lb]{\smash{{\SetFigFont{17}{20.4}{\rmdefault}{\mddefault}{\itdefault}{$y+a_1+2a_3-1$}%
}}}}}
\put(5506,-1336){\rotatebox{300.0}{\makebox(0,0)[lb]{\smash{{\SetFigFont{17}{20.4}{\rmdefault}{\mddefault}{\itdefault}{$y+a_1-1$}%
}}}}}
\put(6631,-3391){\rotatebox{300.0}{\makebox(0,0)[lb]{\smash{{\SetFigFont{17}{20.4}{\rmdefault}{\mddefault}{\itdefault}{$z$}%
}}}}}
\end{picture}}
  \caption{How to apply Kuo condensation to a $\mathcal{R}$-type region.}\label{halfhex7}
\end{figure}

\begin{figure}
  \centering
  \setlength{\unitlength}{3947sp}%
\begingroup\makeatletter\ifx\SetFigFont\undefined%
\gdef\SetFigFont#1#2#3#4#5{%
  \reset@font\fontsize{#1}{#2pt}%
  \fontfamily{#3}\fontseries{#4}\fontshape{#5}%
  \selectfont}%
\fi\endgroup%
\resizebox{10cm}{!}{
\begin{picture}(0,0)%
\includegraphics{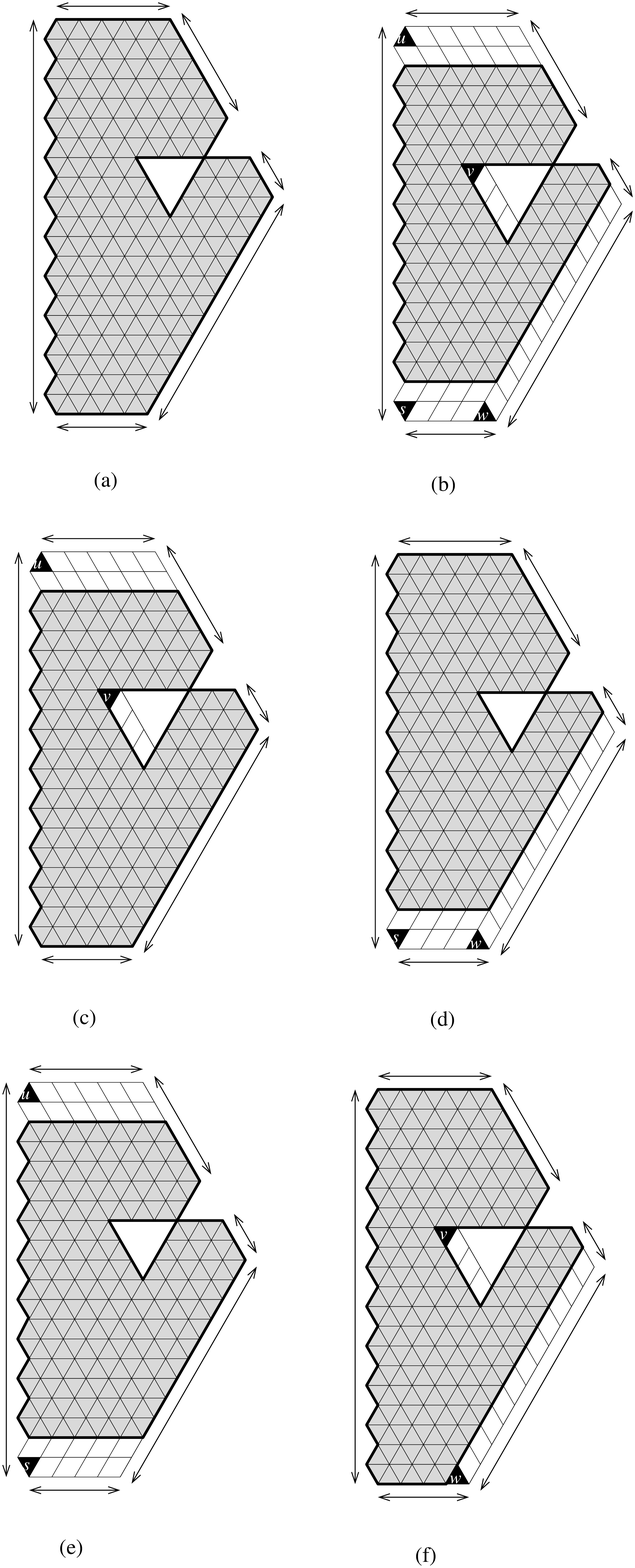}%
\end{picture}%

\begin{picture}(11569,27381)(1466,-27055)
\put(3006,-8041){\makebox(0,0)[lb]{\smash{{\SetFigFont{17}{20.4}{\rmdefault}{\mddefault}{\itdefault}{$x+a_1$}%
}}}}
\put(5661,-2716){\makebox(0,0)[lb]{\smash{{\SetFigFont{17}{20.4}{\rmdefault}{\mddefault}{\itdefault}{$a_1$}%
}}}}
\put(4581,-3256){\makebox(0,0)[lb]{\smash{{\SetFigFont{17}{20.4}{\rmdefault}{\mddefault}{\itdefault}{$a_2$}%
}}}}
\put(5571,-1036){\rotatebox{300.0}{\makebox(0,0)[lb]{\smash{{\SetFigFont{17}{20.4}{\rmdefault}{\mddefault}{\itdefault}{$y+a_1-1$}%
}}}}}
\put(6696,-3091){\rotatebox{300.0}{\makebox(0,0)[lb]{\smash{{\SetFigFont{17}{20.4}{\rmdefault}{\mddefault}{\itdefault}{$z$}%
}}}}}
\put(5496,-6496){\rotatebox{60.0}{\makebox(0,0)[lb]{\smash{{\SetFigFont{17}{20.4}{\rmdefault}{\mddefault}{\itdefault}{$y+z+2a_2-1$}%
}}}}}
\put(3441, -1){\makebox(0,0)[lb]{\smash{{\SetFigFont{17}{20.4}{\rmdefault}{\mddefault}{\itdefault}{$x+a_2$}%
}}}}
\put(2196,-4966){\rotatebox{90.0}{\makebox(0,0)[lb]{\smash{{\SetFigFont{17}{20.4}{\rmdefault}{\mddefault}{\itdefault}{$y+z+a_1+a_2-1$}%
}}}}}
\put(8976,-8161){\makebox(0,0)[lb]{\smash{{\SetFigFont{17}{20.4}{\rmdefault}{\mddefault}{\itdefault}{$x+a_1$}%
}}}}
\put(11631,-2836){\makebox(0,0)[lb]{\smash{{\SetFigFont{17}{20.4}{\rmdefault}{\mddefault}{\itdefault}{$a_1$}%
}}}}
\put(10476,-3391){\makebox(0,0)[lb]{\smash{{\SetFigFont{17}{20.4}{\rmdefault}{\mddefault}{\itdefault}{$a_2$}%
}}}}
\put(11541,-1156){\rotatebox{300.0}{\makebox(0,0)[lb]{\smash{{\SetFigFont{17}{20.4}{\rmdefault}{\mddefault}{\itdefault}{$y+a_1-1$}%
}}}}}
\put(12666,-3211){\rotatebox{300.0}{\makebox(0,0)[lb]{\smash{{\SetFigFont{17}{20.4}{\rmdefault}{\mddefault}{\itdefault}{$z$}%
}}}}}
\put(11466,-6616){\rotatebox{60.0}{\makebox(0,0)[lb]{\smash{{\SetFigFont{17}{20.4}{\rmdefault}{\mddefault}{\itdefault}{$y+z+2a_2-1$}%
}}}}}
\put(9411,-121){\makebox(0,0)[lb]{\smash{{\SetFigFont{17}{20.4}{\rmdefault}{\mddefault}{\itdefault}{$x+a_2$}%
}}}}
\put(8166,-5086){\rotatebox{90.0}{\makebox(0,0)[lb]{\smash{{\SetFigFont{17}{20.4}{\rmdefault}{\mddefault}{\itdefault}{$y+z+a_1+a_2-1$}%
}}}}}
\put(7701,-23266){\rotatebox{90.0}{\makebox(0,0)[lb]{\smash{{\SetFigFont{17}{20.4}{\rmdefault}{\mddefault}{\itdefault}{$y+z+a_1+a_2-1$}%
}}}}}
\put(8946,-18301){\makebox(0,0)[lb]{\smash{{\SetFigFont{17}{20.4}{\rmdefault}{\mddefault}{\itdefault}{$x+a_2$}%
}}}}
\put(2751,-17146){\makebox(0,0)[lb]{\smash{{\SetFigFont{17}{20.4}{\rmdefault}{\mddefault}{\itdefault}{$x+a_1$}%
}}}}
\put(5406,-11821){\makebox(0,0)[lb]{\smash{{\SetFigFont{17}{20.4}{\rmdefault}{\mddefault}{\itdefault}{$a_1$}%
}}}}
\put(4231,-12351){\makebox(0,0)[lb]{\smash{{\SetFigFont{17}{20.4}{\rmdefault}{\mddefault}{\itdefault}{$a_2$}%
}}}}
\put(5316,-10141){\rotatebox{300.0}{\makebox(0,0)[lb]{\smash{{\SetFigFont{17}{20.4}{\rmdefault}{\mddefault}{\itdefault}{$y+a_1-1$}%
}}}}}
\put(6441,-12196){\rotatebox{300.0}{\makebox(0,0)[lb]{\smash{{\SetFigFont{17}{20.4}{\rmdefault}{\mddefault}{\itdefault}{$z$}%
}}}}}
\put(5241,-15601){\rotatebox{60.0}{\makebox(0,0)[lb]{\smash{{\SetFigFont{17}{20.4}{\rmdefault}{\mddefault}{\itdefault}{$y+z+2a_2-1$}%
}}}}}
\put(3186,-9106){\makebox(0,0)[lb]{\smash{{\SetFigFont{17}{20.4}{\rmdefault}{\mddefault}{\itdefault}{$x+a_2$}%
}}}}
\put(1941,-14071){\rotatebox{90.0}{\makebox(0,0)[lb]{\smash{{\SetFigFont{17}{20.4}{\rmdefault}{\mddefault}{\itdefault}{$y+z+a_1+a_2-1$}%
}}}}}
\put(11001,-24796){\rotatebox{60.0}{\makebox(0,0)[lb]{\smash{{\SetFigFont{17}{20.4}{\rmdefault}{\mddefault}{\itdefault}{$y+z+2a_2-1$}%
}}}}}
\put(12201,-21391){\rotatebox{300.0}{\makebox(0,0)[lb]{\smash{{\SetFigFont{17}{20.4}{\rmdefault}{\mddefault}{\itdefault}{$z$}%
}}}}}
\put(8856,-17191){\makebox(0,0)[lb]{\smash{{\SetFigFont{17}{20.4}{\rmdefault}{\mddefault}{\itdefault}{$x+a_1$}%
}}}}
\put(11511,-11866){\makebox(0,0)[lb]{\smash{{\SetFigFont{17}{20.4}{\rmdefault}{\mddefault}{\itdefault}{$a_1$}%
}}}}
\put(10361,-12396){\makebox(0,0)[lb]{\smash{{\SetFigFont{17}{20.4}{\rmdefault}{\mddefault}{\itdefault}{$a_2$}%
}}}}
\put(11421,-10186){\rotatebox{300.0}{\makebox(0,0)[lb]{\smash{{\SetFigFont{17}{20.4}{\rmdefault}{\mddefault}{\itdefault}{$y+a_1-1$}%
}}}}}
\put(12546,-12241){\rotatebox{300.0}{\makebox(0,0)[lb]{\smash{{\SetFigFont{17}{20.4}{\rmdefault}{\mddefault}{\itdefault}{$z$}%
}}}}}
\put(11346,-15646){\rotatebox{60.0}{\makebox(0,0)[lb]{\smash{{\SetFigFont{17}{20.4}{\rmdefault}{\mddefault}{\itdefault}{$y+z+2a_2-1$}%
}}}}}
\put(9291,-9151){\makebox(0,0)[lb]{\smash{{\SetFigFont{17}{20.4}{\rmdefault}{\mddefault}{\itdefault}{$x+a_2$}%
}}}}
\put(8046,-14116){\rotatebox{90.0}{\makebox(0,0)[lb]{\smash{{\SetFigFont{17}{20.4}{\rmdefault}{\mddefault}{\itdefault}{$y+z+a_1+a_2-1$}%
}}}}}
\put(11076,-19336){\rotatebox{300.0}{\makebox(0,0)[lb]{\smash{{\SetFigFont{17}{20.4}{\rmdefault}{\mddefault}{\itdefault}{$y+a_1-1$}%
}}}}}
\put(10086,-21556){\makebox(0,0)[lb]{\smash{{\SetFigFont{17}{20.4}{\rmdefault}{\mddefault}{\itdefault}{$a_2$}%
}}}}
\put(2541,-26221){\makebox(0,0)[lb]{\smash{{\SetFigFont{17}{20.4}{\rmdefault}{\mddefault}{\itdefault}{$x+a_1$}%
}}}}
\put(5196,-20896){\makebox(0,0)[lb]{\smash{{\SetFigFont{17}{20.4}{\rmdefault}{\mddefault}{\itdefault}{$a_1$}%
}}}}
\put(4046,-21416){\makebox(0,0)[lb]{\smash{{\SetFigFont{17}{20.4}{\rmdefault}{\mddefault}{\itdefault}{$a_2$}%
}}}}
\put(5106,-19216){\rotatebox{300.0}{\makebox(0,0)[lb]{\smash{{\SetFigFont{17}{20.4}{\rmdefault}{\mddefault}{\itdefault}{$y+a_1-1$}%
}}}}}
\put(6231,-21271){\rotatebox{300.0}{\makebox(0,0)[lb]{\smash{{\SetFigFont{17}{20.4}{\rmdefault}{\mddefault}{\itdefault}{$z$}%
}}}}}
\put(5031,-24676){\rotatebox{60.0}{\makebox(0,0)[lb]{\smash{{\SetFigFont{17}{20.4}{\rmdefault}{\mddefault}{\itdefault}{$y+z+2a_2-1$}%
}}}}}
\put(2976,-18181){\makebox(0,0)[lb]{\smash{{\SetFigFont{17}{20.4}{\rmdefault}{\mddefault}{\itdefault}{$x+a_2$}%
}}}}
\put(1731,-23146){\rotatebox{90.0}{\makebox(0,0)[lb]{\smash{{\SetFigFont{17}{20.4}{\rmdefault}{\mddefault}{\itdefault}{$y+z+a_1+a_2-1$}%
}}}}}
\put(11166,-21016){\makebox(0,0)[lb]{\smash{{\SetFigFont{17}{20.4}{\rmdefault}{\mddefault}{\itdefault}{$a_1$}%
}}}}
\put(8511,-26341){\makebox(0,0)[lb]{\smash{{\SetFigFont{17}{20.4}{\rmdefault}{\mddefault}{\itdefault}{$x+a_1$}%
}}}}
\end{picture}}
  \caption{Obtaining the first recurrence on the tiling numbers of $\mathcal{R}$-type regions.}\label{halfhex9}
\end{figure}

\begin{figure}
  \centering
  \setlength{\unitlength}{3947sp}%
\begingroup\makeatletter\ifx\SetFigFont\undefined%
\gdef\SetFigFont#1#2#3#4#5{%
  \reset@font\fontsize{#1}{#2pt}%
  \fontfamily{#3}\fontseries{#4}\fontshape{#5}%
  \selectfont}%
\fi\endgroup%
\resizebox{9cm}{!}{
\begin{picture}(0,0)%
\includegraphics{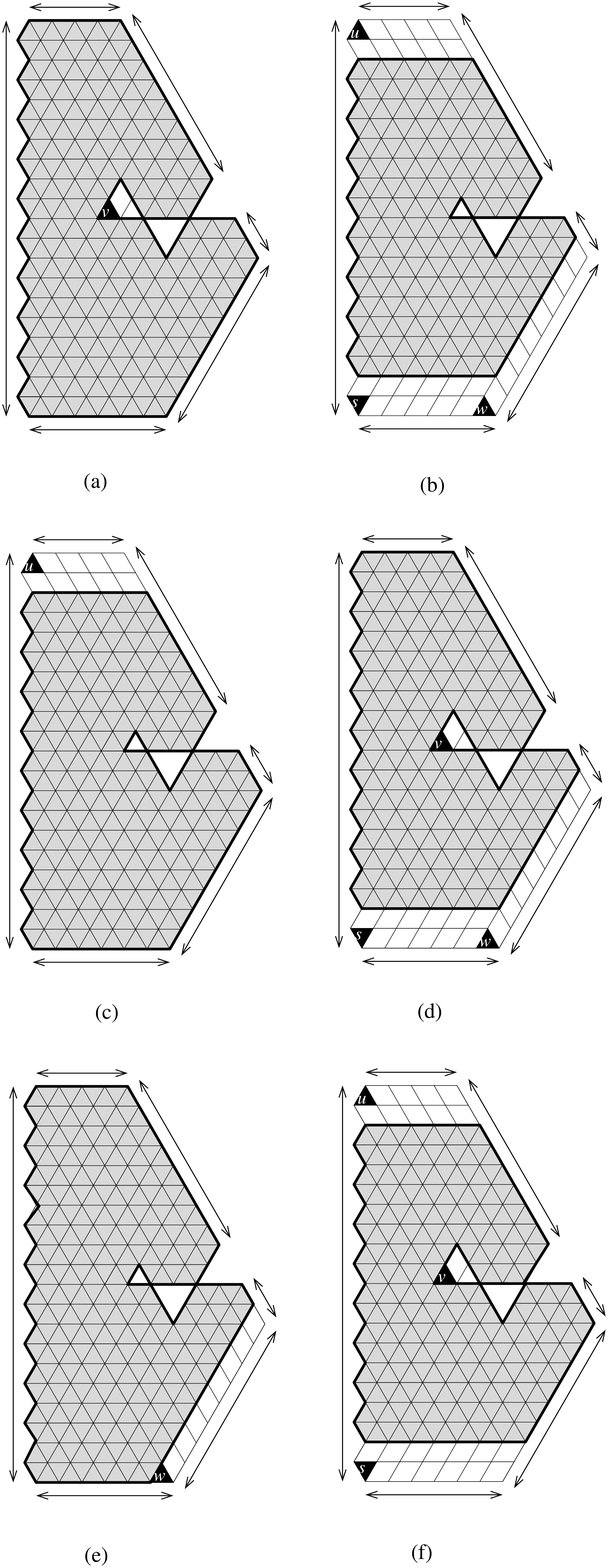}%
\end{picture}%
\begin{picture}(10918,27196)(1739,-26739)
\put(9489,-13066){\makebox(0,0)[lb]{\smash{{\SetFigFont{17}{20.4}{\rmdefault}{\mddefault}{\itdefault}{$a_3$}%
}}}}
\put(4681,-3976){\makebox(0,0)[lb]{\smash{{\SetFigFont{17}{20.4}{\rmdefault}{\mddefault}{\itdefault}{$a_2$}%
}}}}
\put(5556,-3586){\makebox(0,0)[lb]{\smash{{\SetFigFont{17}{20.4}{\rmdefault}{\mddefault}{\itdefault}{$a_1$}%
}}}}
\put(2886,-7801){\makebox(0,0)[lb]{\smash{{\SetFigFont{17}{20.4}{\rmdefault}{\mddefault}{\itdefault}{$x+a_1+a_3$}%
}}}}
\put(4824,-629){\rotatebox{300.0}{\makebox(0,0)[lb]{\smash{{\SetFigFont{17}{20.4}{\rmdefault}{\mddefault}{\itdefault}{$y+a_1+2a_3-1$}%
}}}}}
\put(6561,-3841){\rotatebox{300.0}{\makebox(0,0)[lb]{\smash{{\SetFigFont{17}{20.4}{\rmdefault}{\mddefault}{\itdefault}{$z$}%
}}}}}
\put(5811,-6811){\rotatebox{60.0}{\makebox(0,0)[lb]{\smash{{\SetFigFont{17}{20.4}{\rmdefault}{\mddefault}{\itdefault}{$y+z+2a_2-1$}%
}}}}}
\put(2811,119){\makebox(0,0)[lb]{\smash{{\SetFigFont{17}{20.4}{\rmdefault}{\mddefault}{\itdefault}{$x+a_2$}%
}}}}
\put(2001,-5251){\rotatebox{90.0}{\makebox(0,0)[lb]{\smash{{\SetFigFont{17}{20.4}{\rmdefault}{\mddefault}{\itdefault}{$y+z+a_1+a_2+a_3-1$}%
}}}}}
\put(3729,-4066){\makebox(0,0)[lb]{\smash{{\SetFigFont{17}{20.4}{\rmdefault}{\mddefault}{\itdefault}{$a_3$}%
}}}}
\put(10284,-3976){\makebox(0,0)[lb]{\smash{{\SetFigFont{17}{20.4}{\rmdefault}{\mddefault}{\itdefault}{$a_2$}%
}}}}
\put(9309,-3999){\makebox(0,0)[lb]{\smash{{\SetFigFont{17}{20.4}{\rmdefault}{\mddefault}{\itdefault}{$a_3-1$}%
}}}}
\put(11166,-3571){\makebox(0,0)[lb]{\smash{{\SetFigFont{17}{20.4}{\rmdefault}{\mddefault}{\itdefault}{$a_1$}%
}}}}
\put(8496,-7786){\makebox(0,0)[lb]{\smash{{\SetFigFont{17}{20.4}{\rmdefault}{\mddefault}{\itdefault}{$x+a_1+a_3$}%
}}}}
\put(10434,-614){\rotatebox{300.0}{\makebox(0,0)[lb]{\smash{{\SetFigFont{17}{20.4}{\rmdefault}{\mddefault}{\itdefault}{$y+a_1+2a_3-1$}%
}}}}}
\put(12171,-3826){\rotatebox{300.0}{\makebox(0,0)[lb]{\smash{{\SetFigFont{17}{20.4}{\rmdefault}{\mddefault}{\itdefault}{$z$}%
}}}}}
\put(11421,-6796){\rotatebox{60.0}{\makebox(0,0)[lb]{\smash{{\SetFigFont{17}{20.4}{\rmdefault}{\mddefault}{\itdefault}{$y+z+2a_2-1$}%
}}}}}
\put(8421,134){\makebox(0,0)[lb]{\smash{{\SetFigFont{17}{20.4}{\rmdefault}{\mddefault}{\itdefault}{$x+a_2$}%
}}}}
\put(7611,-5236){\rotatebox{90.0}{\makebox(0,0)[lb]{\smash{{\SetFigFont{17}{20.4}{\rmdefault}{\mddefault}{\itdefault}{$y+z+a_1+a_2+a_3-1$}%
}}}}}
\put(4731,-13021){\makebox(0,0)[lb]{\smash{{\SetFigFont{17}{20.4}{\rmdefault}{\mddefault}{\itdefault}{$a_2$}%
}}}}
\put(3804,-13111){\makebox(0,0)[lb]{\smash{{\SetFigFont{17}{20.4}{\rmdefault}{\mddefault}{\itdefault}{$a_3-1$}%
}}}}
\put(5616,-12661){\makebox(0,0)[lb]{\smash{{\SetFigFont{17}{20.4}{\rmdefault}{\mddefault}{\itdefault}{$a_1$}%
}}}}
\put(2946,-16876){\makebox(0,0)[lb]{\smash{{\SetFigFont{17}{20.4}{\rmdefault}{\mddefault}{\itdefault}{$x+a_1+a_3$}%
}}}}
\put(4884,-9704){\rotatebox{300.0}{\makebox(0,0)[lb]{\smash{{\SetFigFont{17}{20.4}{\rmdefault}{\mddefault}{\itdefault}{$y+a_1+2a_3-1$}%
}}}}}
\put(6621,-12916){\rotatebox{300.0}{\makebox(0,0)[lb]{\smash{{\SetFigFont{17}{20.4}{\rmdefault}{\mddefault}{\itdefault}{$z$}%
}}}}}
\put(5871,-15886){\rotatebox{60.0}{\makebox(0,0)[lb]{\smash{{\SetFigFont{17}{20.4}{\rmdefault}{\mddefault}{\itdefault}{$y+z+2a_2-1$}%
}}}}}
\put(2871,-8956){\makebox(0,0)[lb]{\smash{{\SetFigFont{17}{20.4}{\rmdefault}{\mddefault}{\itdefault}{$x+a_2$}%
}}}}
\put(2061,-14326){\rotatebox{90.0}{\makebox(0,0)[lb]{\smash{{\SetFigFont{17}{20.4}{\rmdefault}{\mddefault}{\itdefault}{$y+z+a_1+a_2+a_3-1$}%
}}}}}
\put(7731,-23401){\rotatebox{90.0}{\makebox(0,0)[lb]{\smash{{\SetFigFont{17}{20.4}{\rmdefault}{\mddefault}{\itdefault}{$y+z+a_1+a_2+a_3-1$}%
}}}}}
\put(8541,-18031){\makebox(0,0)[lb]{\smash{{\SetFigFont{17}{20.4}{\rmdefault}{\mddefault}{\itdefault}{$x+a_2$}%
}}}}
\put(10344,-13036){\makebox(0,0)[lb]{\smash{{\SetFigFont{17}{20.4}{\rmdefault}{\mddefault}{\itdefault}{$a_2$}%
}}}}
\put(11226,-12646){\makebox(0,0)[lb]{\smash{{\SetFigFont{17}{20.4}{\rmdefault}{\mddefault}{\itdefault}{$a_1$}%
}}}}
\put(8556,-16861){\makebox(0,0)[lb]{\smash{{\SetFigFont{17}{20.4}{\rmdefault}{\mddefault}{\itdefault}{$x+a_1+a_3$}%
}}}}
\put(10494,-9689){\rotatebox{300.0}{\makebox(0,0)[lb]{\smash{{\SetFigFont{17}{20.4}{\rmdefault}{\mddefault}{\itdefault}{$y+a_1+2a_3-1$}%
}}}}}
\put(12231,-12901){\rotatebox{300.0}{\makebox(0,0)[lb]{\smash{{\SetFigFont{17}{20.4}{\rmdefault}{\mddefault}{\itdefault}{$z$}%
}}}}}
\put(11481,-15871){\rotatebox{60.0}{\makebox(0,0)[lb]{\smash{{\SetFigFont{17}{20.4}{\rmdefault}{\mddefault}{\itdefault}{$y+z+2a_2-1$}%
}}}}}
\put(8481,-8941){\makebox(0,0)[lb]{\smash{{\SetFigFont{17}{20.4}{\rmdefault}{\mddefault}{\itdefault}{$x+a_2$}%
}}}}
\put(7671,-14311){\rotatebox{90.0}{\makebox(0,0)[lb]{\smash{{\SetFigFont{17}{20.4}{\rmdefault}{\mddefault}{\itdefault}{$y+z+a_1+a_2+a_3-1$}%
}}}}}
\put(11541,-24961){\rotatebox{60.0}{\makebox(0,0)[lb]{\smash{{\SetFigFont{17}{20.4}{\rmdefault}{\mddefault}{\itdefault}{$y+z+2a_2-1$}%
}}}}}
\put(4791,-22111){\makebox(0,0)[lb]{\smash{{\SetFigFont{17}{20.4}{\rmdefault}{\mddefault}{\itdefault}{$a_2$}%
}}}}
\put(3811,-22186){\makebox(0,0)[lb]{\smash{{\SetFigFont{17}{20.4}{\rmdefault}{\mddefault}{\itdefault}{$a_3-1$}%
}}}}
\put(5676,-21751){\makebox(0,0)[lb]{\smash{{\SetFigFont{17}{20.4}{\rmdefault}{\mddefault}{\itdefault}{$a_1$}%
}}}}
\put(3006,-25966){\makebox(0,0)[lb]{\smash{{\SetFigFont{17}{20.4}{\rmdefault}{\mddefault}{\itdefault}{$x+a_1+a_3$}%
}}}}
\put(4944,-18794){\rotatebox{300.0}{\makebox(0,0)[lb]{\smash{{\SetFigFont{17}{20.4}{\rmdefault}{\mddefault}{\itdefault}{$y+a_1+2a_3-1$}%
}}}}}
\put(6681,-22006){\rotatebox{300.0}{\makebox(0,0)[lb]{\smash{{\SetFigFont{17}{20.4}{\rmdefault}{\mddefault}{\itdefault}{$z$}%
}}}}}
\put(5931,-24976){\rotatebox{60.0}{\makebox(0,0)[lb]{\smash{{\SetFigFont{17}{20.4}{\rmdefault}{\mddefault}{\itdefault}{$y+z+2a_2-1$}%
}}}}}
\put(2931,-18046){\makebox(0,0)[lb]{\smash{{\SetFigFont{17}{20.4}{\rmdefault}{\mddefault}{\itdefault}{$x+a_2$}%
}}}}
\put(2121,-23416){\rotatebox{90.0}{\makebox(0,0)[lb]{\smash{{\SetFigFont{17}{20.4}{\rmdefault}{\mddefault}{\itdefault}{$y+z+a_1+a_2+a_3-1$}%
}}}}}
\put(12291,-21991){\rotatebox{300.0}{\makebox(0,0)[lb]{\smash{{\SetFigFont{17}{20.4}{\rmdefault}{\mddefault}{\itdefault}{$z$}%
}}}}}
\put(10554,-18779){\rotatebox{300.0}{\makebox(0,0)[lb]{\smash{{\SetFigFont{17}{20.4}{\rmdefault}{\mddefault}{\itdefault}{$y+a_1+2a_3-1$}%
}}}}}
\put(8616,-25951){\makebox(0,0)[lb]{\smash{{\SetFigFont{17}{20.4}{\rmdefault}{\mddefault}{\itdefault}{$x+a_1+a_3$}%
}}}}
\put(10419,-22126){\makebox(0,0)[lb]{\smash{{\SetFigFont{17}{20.4}{\rmdefault}{\mddefault}{\itdefault}{$a_2$}%
}}}}
\put(11286,-21736){\makebox(0,0)[lb]{\smash{{\SetFigFont{17}{20.4}{\rmdefault}{\mddefault}{\itdefault}{$a_1$}%
}}}}
\put(9519,-22171){\makebox(0,0)[lb]{\smash{{\SetFigFont{17}{20.4}{\rmdefault}{\mddefault}{\itdefault}{$a_3$}%
}}}}
\end{picture}}
  \caption{Obtaining the second recurrence for number tilings of $\mathcal{R}$-type regions.}\label{halfhex10}
\end{figure}

Consider the region corresponding to $G-\{u,v,w,s\}$ (see Figure \ref{halfhex9}(b)). The removal of the four black triangles, that correspond to the four vertices $u,v,w,s$, yields several  forced lozenges. By removing these forced lozenges, we get a the region $\mathcal{R}_{x,y-2,z-1}(a_1,a_2,\dotsc,a_{2l}+1)$ whose tiling number is the same as that of the original one. Thus, we have
\begin{equation}
\M(G-\{u,v,w,s\})=\M(\mathcal{R}_{x,y-2,z-1}(a_1,a_2,\dotsc,a_{2l}+1)).
\end{equation}
Consider forced lozenges shown in Figures \ref{halfhex9}(c)--(f), we obtain respectively
\begin{equation}
\M(G-\{u,v\})=\M(\mathcal{R}_{x,y,z-1}(a_1,a_2,\dotsc,a_{2l})),
\end{equation}
\begin{equation}
\M(G-\{w,s\})=\M(\mathcal{R}_{x,y-2,z}(a_1,a_2,\dotsc,a_{2l}+1)),
\end{equation}
\begin{equation}
\M(G-\{u,s\})=\M(\mathcal{R}_{x+1,y-2,z}(a_1,a_2,\dotsc,a_{2l})),
\end{equation}
and
\begin{equation}
\M(G-\{v,w\})=\M(\mathcal{R}_{x-1,y,z-1}(a_1,a_2,\dotsc,a_{2l}+1)).
\end{equation}

Plugging all the above equalities into the recurrence in Kuo's Theorem \ref{kuothm1}, we get the first recurrence
\begin{align}\label{Kuorecur1}
\M(\mathcal{R}_{x,y,z}(a_1,a_2,\dotsc,a_{2l}))&\M(\mathcal{R}_{x,y-2,z-1}(a_1,a_2,\dotsc,a_{2l}+1))=\notag\\&\M(\mathcal{R}_{x,y,z-1}(a_1,a_2,\dotsc,a_{2l}))\M(\mathcal{R}_{x,y-2,z}(a_1,a_2,\dotsc,a_{2l}+1))\notag\\
&+\M(\mathcal{R}_{x+1,y-2,z}(a_1,a_2,\dotsc,a_{2l}))\M(\mathcal{R}_{x-1,y,z-1}(a_1,a_2,\dotsc,a_{2l}+1)).
\end{align}

\medskip

To obtain the second recurrence, we apply Kuo's Theorem \ref{kuothm2} to the region obtained from the region  $\mathcal{R}_{x,y,z,t}(a_1,a_2,\dotsc,a_{2l+1})$ by adding a band of unit triangles along the left side of the $a_{2l+1}$-triangle in the array. The four vertices $u,v,w,s$ are chosen as in Figure  \ref{halfhex7}(b). Figure \ref{halfhex10} tells us that the product of the tiling numbers of the two regions on the top row equals  the product of the tiling numbers of the two regions on the middle row, plus the product of the tiling numbers of the two regions on the bottom row. Precisely, we have
\begin{align}\label{Kuorecur2}
\M(\mathcal{R}_{x,y,z}(a_1,a_2,\dotsc,a_{2l+1}))&\M(\mathcal{R}_{x+1,y,z-1}(a_1,a_2,\dotsc,a_{2l+1}-1))=\notag\\&\M(\mathcal{R}_{x+1,y,z}(a_1,a_2,\dotsc,a_{2l+1}-1))\M(\mathcal{R}_{x,y,z-1}(a_1,a_2,\dotsc,a_{2l+1}))\notag\\
&+\M(\mathcal{R}_{x,y+2,z-1}(a_1,a_2,\dotsc,a_{2l+1}-1))\M(\mathcal{R}_{x+1,y-2,z}(a_1,a_2,\dotsc,a_{2l+1})).
\end{align}

To finish the proof we only need to verify that the both formulas (\ref{ee}) and (\ref{oe}) also satisfy the same recurrences (\ref{Kuorecur1}) and (\ref{Kuorecur2}). However, this verification will be carried out in the next section.
\end{proof}

\section{Verifying that formulas  (\ref{ee}) and (\ref{oe}) satisfy recurrences (\ref{Kuorecur1}) and (\ref{Kuorecur2})}\label{Recurrence}
This section is devoted to the verification that the both formulas (\ref{ee}) and (\ref{oe}) satisfy the recurrences (\ref{Kuorecur1}) and (\ref{Kuorecur2}).

Assume that $x,y,z,n$ are nonnegative integers and  that $\textbf{a}=\{a_i\}_{i=1}^{n}$ is a sequence of positive integers. Define the function $\Phi_{x,y,z}(\textbf{a})$  to be the expression on the right-hand side of (\ref{ee}) if $y$ is odd,  and to be the expression on the right-hand side of (\ref{oe}) if $y$ is even.

\begin{lem}\label{numberrecur} Assume that $x,y,z,n$ are positive integers and that $\textbf{a}=\{a_i\}_{i=1}^{n}$ is a sequence of positive integers. For $n$ is even, we have
\begin{align}\label{numberrecur1}
\Phi_{x,y,z}&(a_1,a_2,\dotsc,a_{n})\Phi_{x,y-2,z-1}(a_1,a_2,\dotsc,a_{n}+1)=\notag\\&\Phi_{x,y,z-1}(a_1,a_2,\dotsc,a_{n})\Phi_{x,y-2,z}(a_1,a_2,\dotsc,a_{n}+1)\notag\\
&+\Phi_{x+1,y-2,z}(a_1,a_2,\dotsc,a_{n})\Phi_{x-1,y,z-1}(a_1,a_2,\dotsc,a_{n}+1).
\end{align}
For odd $n$
\begin{align}\label{numberrecur2}
\Phi_{x,y,z}&(a_1,a_2,\dotsc,a_{n})\Phi_{x+1,y,z-1}(a_1,a_2,\dotsc,a_{n}-1))=\notag\\&\Phi_{x+1,y,z}(a_1,a_2,\dotsc,a_{n}-1))\Phi_{x,y,z-1}(a_1,a_2,\dotsc,a_{n}))\notag\\
&+\Phi_{x,y+2,z-1}(a_1,a_2,\dotsc,a_{n}-1)\Phi_{x+1,y-2,z}(a_1,a_2,\dotsc,a_{n}),
\end{align}
where \[\Phi_{x,y,z}(a_1,a_2,\dotsc,a_{n-1},0):=\Phi_{x,y,z}(a_1,a_2,\dotsc,a_{n-1})\] by convention.
\end{lem}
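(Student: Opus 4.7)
The plan is a direct algebraic verification: since $\Phi_{x,y,z}(\textbf{a})$ is given by a fully explicit product of Pochhammer symbols, hyperfactorials, and $\Q$- or $\K$-type expressions (which themselves factor, via Lemma \ref{QAR}, into simple products), each recurrence should reduce to an elementary rational identity once every ratio $\Phi_A/\Phi_B$ is simplified. Concretely, I would divide both sides of (\ref{numberrecur1}) (respectively (\ref{numberrecur2})) by its left-hand side and show that the resulting two-term sum equals $1$.

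First, for recurrence (\ref{numberrecur1}) with $n$ even, note that the parity of $y$ is preserved under $y \mapsto y-2$, so a single closed form (either (\ref{ee}) or (\ref{oe})) applies uniformly to all six occurrences of $\Phi$. Using Lemma \ref{QAR}, each $\Q$ or $\K$ factor expands into a product indexed by pairs $1 \le i < j \le 2l$ of terms $\Hf(\s_j(\textbf{t}) \pm \s_i(\textbf{t}))$. The crucial observation is that the parameter shifts considered act only on the \emph{last one or two} partial sums: changing $a_n \to a_n + 1$ modifies only $\s_n$, while changing $y \to y-2$ uniformly shifts the leading argument $x+\frac{y-1}{2}$ (respectively $x+\frac{y}{2}$) of each $\Q$- (or $\K$-) call by $-1$. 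Consequently, almost every double-product factor is common to all six ratios and cancels. The analogous analysis handles (\ref{numberrecur2}) with $n$ odd, where $a_n \to a_n - 1$ plays the role of the $\textbf{a}$-shift and $x \mapsto x+1$ shifts the first argument of the second $\Q$/$\K$ call.

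After this cancellation each ratio $\Phi_A/\Phi_{x,y,z}(\textbf{a})$ reduces to a finite product of linear factors in $x$, $y$, $z$ and $\s_n(\textbf{a})$, together with a short Pochhammer piece from the prefactor $\prod_i (2x+2i)_{2\s_n(\textbf{a})+2y+2z-4i+1}$ and a few hyperfactorial quotients that telescope via $\Hf(n+1)/\Hf(n)=n!$ and the analogous identity for $\Hf_2$. Each of (\ref{numberrecur1}) and (\ref{numberrecur2}) then collapses to an identity of the form $1 = R_1 + R_2$ in which $R_1$ and $R_2$ are explicit rational functions with numerators and denominators of small total degree; clearing denominators produces a polynomial identity that can be checked by direct expansion.

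The main obstacle is bookkeeping, not mathematical depth. One must carefully track, for each of the six $\Phi$-factors, how the arguments of both $\Q$- (or $\K$-) calls transform, how the Pochhammer prefactor shifts when $x$, $y$, $z$ or $\s_n(\textbf{a})$ changes, and how the hyperfactorials $\Hf(\cdot)$ and $\Hf_2(\cdot)$ telescope. A uniform strategy is to reindex everything by the enlarged sequence of partial sums $\s_1,\s_2,\dots,\s_n,\s_n+z$ that appears as the arguments of the $\Q$/$\K$ call with $z$ appended; then the shifts act on at most the last two entries of this sequence, and the symbolic cancellation becomes mechanical. I expect that handling the case distinction between $y$ odd and $y$ even (which forces switching between the $\Q$- and $\K$-formulas and between (\ref{ee}) and (\ref{oe})) will be the most error-prone step, so I would organize the proof in two parallel streams, verifying the $\Q$-version first and then reusing the computation verbatim for the $\K$-version, which differs only in several constant shifts of the index $+1\leftrightarrow 0$ inside the hyperfactorials.
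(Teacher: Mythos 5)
Your proposal is correct and follows essentially the same route as the paper: the paper also normalizes each recurrence by its left-hand side, factors every ratio of $\Phi$'s into a Pochhammer prefactor piece, a hyperfactorial piece, and a $\Q$/$\K$ piece (its $f$-, $g$-, $h$-factors), cancels using explicit ratio identities for $\Q(t_1,\dots,t_{2l}+1)/\Q(t_1,\dots,t_{2l})$ and its $\K$-analogue drawn from Lemma \ref{QAR}, and reduces each case to a two-term rational identity summing to $1$, treating odd and even $y$ in parallel streams exactly as you describe.
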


\begin{proof}
We define three component functions $f,g,h$ of the $\Phi$-function as follows.
\begin{equation}
f_{x,y,z}(\textbf{a}):=\prod_{i=1}^{\left\lfloor\frac{y}{2}\right\rfloor}(2x+2i)_{2\s_n(\textbf{a})+2y+2z-4i+1}.
\end{equation}
 The $g$-function is defined as
\begin{equation}
g_{y,z}(\textbf{a}):=\frac{1}{2^{y-1}}\frac{\Hf(\s_n(\textbf{a})+y+z-1)\Hf_2(y)\Hf_2(2\E(\textbf{a})+2z+1)\Hf_2(2\Od(\textbf{a})+1)\Hf_2(2\s_n(\textbf{a})+y+2z)}
{\Hf(\s_n(\textbf{a})+z)\Hf_2(2\E(\textbf{a})+y+2z)\Hf_2(2\Od(\textbf{a})+y)\Hf_2(2\s_n(\textbf{a})+2y+2z-1)}
\end{equation}
when $y$ is odd, and
\begin{equation}
g_{y,z}(\textbf{a}):=\frac{\Hf(\s_n(\textbf{a})+y+z)\Hf_2(y)\Hf_2(2\E(\textbf{a})+2z)\Hf_2(2\Od(\textbf{a}))\Hf_2(2\s_n(\textbf{a})+y+2z)}
{\Hf(\s_n(\textbf{a})+z)\Hf_2(2\E(\textbf{a})+y+2z)\Hf_2(2\Od(\textbf{a})+y)\Hf_2(2\s_n(\textbf{a})+2y+2z)}
\end{equation}
when $y$ is even. The function $h$ is also defined based on the parity of $y$ as:
\begin{equation}
h_{x,y,z}(\textbf{a}):=\Q\left(x+\frac{y-1}{2}+a_{2\lfloor\frac{n+1}{2}\rfloor},a_{2\lfloor\frac{n+1}{2}\rfloor-1},\dotsc,a_1\right)\Q\left(x+\frac{y-1}{2}+a_{2\lfloor\frac{n}{2}\rfloor+1},\dotsc,a_1,z\right)
\end{equation}
if $y$ is odd, and
\begin{equation}
h_{x,y,z}(\textbf{a}):=\K\left(x+\frac{y}{2}+a_{2\lfloor\frac{n+1}{2}\rfloor},a_{2\lfloor\frac{n+1}{2}\rfloor-1},\dotsc,a_1\right)\K\left(x+\frac{y}{2}+a_{2\lfloor\frac{n}{2}\rfloor+1},a_{2\lfloor\frac{n}{2}\rfloor},\dotsc,a_1,z\right)
\end{equation}
if $y$ is even. Therefore we always have \[\Phi_{x,y,z}(\textbf{a})=f_{x,y,z}(\textbf{a})g_{y,z}(\textbf{a})h_{x,y,z}(\textbf{a}).\]

\medskip

To prove  (\ref{numberrecur1}), we need to show that
\begin{align}\label{numberrecur1b}
\frac{\Phi_{x,y,z-1}(a_1,a_2,\dotsc,a_{2l})}{\Phi_{x,y-2,z-1}(a_1,a_2,\dotsc,a_{2l}+1)}&\frac{\Phi_{x,y-2,z}(a_1,a_2,\dotsc,a_{2l}+1)}{\Phi_{x,y,z}(a_1,a_2,\dotsc,a_{2l})}\notag\\
&+\frac{\Phi_{x+1,y-2,z}(a_1,a_2,\dotsc,a_{2l})}{\Phi_{x,y,z}(a_1,a_2,\dotsc,a_{2l})}\frac{\Phi_{x-1,y,z-1}(a_1,a_2,\dotsc,a_{2l}+1)}{\Phi_{x,y-2,z-1}(a_1,a_2,\dotsc,a_{2l}+1)}=1,
\end{align}
for any positive integer $l$ (i.e. $n=2l$).

The first term on the left-hand side of (\ref{numberrecur1b}) can be written as the product of three similar terms for the component functions $f,g,h$ (called the $f$-, $g$-, and $h$- \emph{factors}, respectively).
\begin{align}\label{factoreq1}
\frac{\Phi_{x,y-2,z-1}(a_1,a_2,\dotsc,a_{2l}+1)}{\Phi_{x,y,z-1}(a_1,a_2,\dotsc,a_{2l})}&\frac{\Phi_{x,y,z}(a_1,a_2,\dotsc,a_{2l})}{\Phi_{x,y-2,z}(a_1,a_2,\dotsc,a_{2l}+1)}=\notag\\
&\frac{f_{x,y-2,z-1}(a_1,a_2,\dotsc,a_{2l}+1)}{f_{x,y,z-1}(a_1,a_2,\dotsc,a_{2l})}\frac{f_{x,y,z}(a_1,a_2,\dotsc,a_{2l})}{f_{x,y-2,z}(a_1,a_2,\dotsc,a_{2l}+1)}\notag\\
&\times\frac{g_{y-2,z-1}(a_1,a_2,\dotsc,a_{2l}+1)}{g_{y,z-1}(a_1,a_2,\dotsc,a_{2l})}\frac{g_{y,z}(a_1,a_2,\dotsc,a_{2l})}{g_{y-2,z}(a_1,a_2,\dotsc,a_{2l}+1)}\notag\\
&\times\frac{h_{x,y-2,z-1}(a_1,a_2,\dotsc,a_{2l}+1)}{h_{x,y,z-1}(a_1,a_2,\dotsc,a_{2l})}\frac{h_{x,y,z}(a_1,a_2,\dotsc,a_{2l})}{h_{x,y-2,z}(a_1,a_2,\dotsc,a_{2l}+1)}.
\end{align}

\medskip

We consider first the case when $y$ is odd.

\medskip

In the rest of this proof, we use the shorthand notations $\s:=\sum_{i=1}^{n}a_i$, $\od:=\sum_{\text{$i$ odd}}a_i$ and $\e:=\sum_{\text{$i$ even}}a_i$.

When $y$ is odd, the $f$-factor on the right-hand side of (\ref{factoreq1}) can be rewritten as
\begin{align}\label{component1}
&\frac{f_{x,y-2,z-1}(a_1,a_2,\dotsc,a_{2l}+1)}{f_{x,y,z-1}(a_1,a_2,\dotsc,a_{2l})}\frac{f_{x,y,z}(a_1,a_2,\dotsc,a_{2l})}{f_{x,y-2,z}(a_1,a_2,\dotsc,a_{2l}+1)}\notag\\
&\quad\quad\quad\quad=\frac{\prod_{i=1}^{\frac{y-1}{2}-1}(2x+2i)_{2\s+2y+2z-4i-1}\prod_{i=1}^{\frac{y-1}{2}}(2x+2i)_{2\s+2y+2z-4i-1}}
{\prod_{i=1}^{\frac{y-1}{2}}(2x+2i)_{2\s+2y+2z-4i+1}\prod_{i=1}^{\frac{y-1}{2}-1}(2x+2i)_{2\s+2y+2z-4i-3}}\notag\\
&\quad\quad\quad\quad=\prod_{i=1}^{\frac{y-1}{2}}\frac{(2x+2i)_{2\s+2y+2z-4i-1}}{(2x+2i)_{2\s+2y+2z-4i+1}}\prod_{i=1}^{\frac{y-1}{2}-1}\frac{(2x+2i)_{2\s+2y+2z-4i-1}}{(2x+2i)_{2\s+2y+2z-4i-3}}\notag\\
&\quad\quad\quad\quad=\frac{1}{(2x+2\s+2y+2z-3)_2}.
\end{align}
We can also simplify the $g$-factor in (\ref{factoreq1}) as
\begin{align}\label{component2}
&\frac{g_{y-2,z-1}(a_1,a_2,\dotsc,a_{2l}+1)}{g_{y,z-1}(a_1,a_2,\dotsc,a_{2l})}\frac{g_{y,z}(a_1,a_2,\dotsc,a_{2l})}{g_{y-2,z}(a_1,a_2,\dotsc,a_{2l}+1)}=\notag\\
&\quad\quad\quad\quad=\frac{(2\e+2z)(2\e+2z+1)(2\s+2y+2z-3)(2\s+2y+2z-4)}{(\s+y+z-2)(\s+z)}.
\end{align}
By removing common factors in the numerator and denominator of the $h$-factor in (\ref{factoreq1}), we have
\begin{align}
&\frac{h_{x,y-2,z-1}(a_1,a_2,\dotsc,a_{2l}+1)}{h_{x,y,z-1}(a_1,a_2,\dotsc,a_{2l})}\frac{h_{x,y,z}(a_1,a_2,\dotsc,a_{2l})}{h_{x,y-2,z}(a_1,a_2,\dotsc,a_{2l}+1)}=\notag\\
&\quad\quad\quad\quad=\frac{\Q\left(x+\frac{y-1}{2},a_{2l},\dotsc,a_1,z-1\right)}{\Q\left(x+\frac{y-1}{2},a_{2l},\dotsc,a_1,z\right)}\frac{\Q\left(x+\frac{y-1}{2}-1,a_{2l}+1,\dotsc,a_1,z\right)}{\Q\left(x+\frac{y-1}{2}-1,a_{2l}+1,\dotsc,a_1,z-1\right)}.
\end{align}

We have the following claim as a direct consequence of Lemma \ref{QAR}:

\begin{claim} For any sequence of nonnegative integers $\textbf{t}=(t_1,t_2,\dotsc,t_{2l})$.
\begin{align}
\frac{\Q(t_1,\dots,t_{2l}+1)}{\Q(t_1,\dots,t_{2l})}=&\frac{(\s_{2l}(\textbf{t})+1)}{(2\E(\textbf{t})+1)!(2\s_{2l}(\textbf{t})+1)!}\prod_{i=1}^{l}\frac{(\s_{2l}(\textbf{t})-\s_{2i-1}(\textbf{t}))!(\s_{2l}(\textbf{t})+\s_{2i}(\textbf{t})+1)!}{(\s_{2l}(\textbf{t})-\s_{2i}(\textbf{t}))!(\s_{2l}(\textbf{t})+\s_{2i-1}(\textbf{t})+1)!}.
\end{align}
\end{claim}

Applying the above claim, our $h$-factor can be simplify further as
\begin{align}\label{component3}
&\frac{h_{x,y-2,z-1}(a_1,a_2,\dotsc,a_{2l}+1)}{h_{x,y,z-1}(a_1,a_2,\dotsc,a_{2l})}\frac{h_{x,y,z}(a_1,a_2,\dotsc,a_{2l})}{h_{x,y-2,z}(a_1,a_2,\dotsc,a_{2l}+1)}=\notag\\
&\quad\quad\quad\quad=\frac{(2x+\s+y+z-1)(\s+z)}{(2\e+2z+1)(2\e+2z)}.
\end{align}

From (\ref{component1}), (\ref{component2}), and (\ref{component3}), we simplify the first term on the left-hand side of (\ref{numberrecur1b}) as:
\begin{align}\label{recur1eq1}
\frac{\Phi_{x,y-2,z-1}(a_1,a_2,\dotsc,a_{2l}+1)}{\Phi_{x,y,z-1}(a_1,a_2,\dotsc,a_{2l})}&\frac{\Phi_{x,y,z}(a_1,a_2,\dotsc,a_{2l})}{\Phi_{x,y-2,z}(a_1,a_2,\dotsc,a_{2l}+1)}=
\frac{2(2x+\s+y+z-1)(2\s+2y+2z-3)}{(2x+2\s+2y+2z-3)_2}.
\end{align}

To simplify the second term on the left-hand side of (\ref{numberrecur1b}), we also write it as the product of three factors as
\begin{align}\label{factoreq2}
&\frac{\Phi_{x+1,y-2,z}(a_1,a_2,\dotsc,a_{2l})}{\Phi_{x,y,z}(a_1,a_2,\dotsc,a_{2l})}\frac{\Phi_{x-1,y,z-1}(a_1,a_2,\dotsc,a_{2l}+1)}{\Phi_{x,y-2,z-1}(a_1,a_2,\dotsc,a_{2l}+1)}=\notag\\
&\quad\quad\quad\quad\frac{f_{x+1,y-2,z}(a_1,a_2,\dotsc,a_{2l})}{f_{x,y,z}(a_1,a_2,\dotsc,a_{2l})}\frac{f_{x-1,y,z-1}(a_1,a_2,\dotsc,a_{2l}+1)}{f_{x,y-2,z-1}(a_1,a_2,\dotsc,a_{2l}+1)}\notag\\
&\quad\quad\quad\quad \times\frac{g_{y-2,z}(a_1,a_2,\dotsc,a_{2l})}{g_{y,z}(a_1,a_2,\dotsc,a_{2l})}\frac{g_{y,z-1}(a_1,a_2,\dotsc,a_{2l}+1)}{g_{y-2,z-1}(a_1,a_2,\dotsc,a_{2l}+1)}\notag\\
&\quad\quad\quad\quad \times\frac{h_{x+1,y-2,z}(a_1,a_2,\dotsc,a_{2l})}{h_{x,y,z}(a_1,a_2,\dotsc,a_{2l})}\frac{h_{x-1,y,z-1}(a_1,a_2,\dotsc,a_{2l}+1)}{h_{x,y-2,z-1}(a_1,a_2,\dotsc,a_{2l}+1)}.
\end{align}
Next, we will simplify each of the three factors on the right-hand side of (\ref{factoreq2}). The first $f$-factor is simplified as
\begin{align}
&\frac{f_{x+1,y-2,z}(a_1,a_2,\dotsc,a_{2l})}{f_{x,y,z}(a_1,a_2,\dotsc,a_{2l})}\frac{f_{x-1,y,z-1}(a_1,a_2,\dotsc,a_{2l}+1)}{f_{x,y-2,z-1}(a_1,a_2,\dotsc,a_{2l}+1)}\notag\\
&\quad=\frac{\prod_{i=1}^{\frac{y-1}{2}-1}(2x+2(i+1))_{2\s+2y+2z-4(i+1)+1}\prod_{i=1}^{\frac{y-1}{2}}(2x+2i-2)_{2\s+2y+2z-4i+1}}
{\prod_{i=1}^{\frac{y-1}{2}}(2x+2i)_{2\s+2y+2z-4i+1}\prod_{i=1}^{\frac{y-1}{2}-1}(2x+2(i+1)-2)_{2\s+2y+2z-4(i+1)+1}}\notag\\
&\quad=\frac{\prod_{i=2}^{\frac{y-1}{2}}(2x+2i)_{2\s+2y+2z-4i+1}\prod_{i=1}^{\frac{y-1}{2}}(2x+2i-2)_{2\s+2y+2z-4i+1}}
{\prod_{i=1}^{\frac{y-1}{2}}(2x+2i)_{2\s+2y+2z-4i+1}\prod_{i=2}^{\frac{y-1}{2}}(2x+2i-2)_{2\s+2y+2z-4i+1}}\notag\\
&\quad=\frac{(2x)_{2\s+2y+2z-3}}{(2x+2)_{2\s+2y+2z-3}}=\frac{(2x)(2x+1)}{(2x+2\s+2y+2z-1)_2}.
\end{align}
It is easy to verify, by definition, that the $g$-factor on the rihgt-hand side of (\ref{factoreq2}) can be reducible to $1$ in this case. In the $h$-factor, both its numerator and denominator are equal to
\begin{align}
&\Q\left(x+\frac{y-1}{2}+a_{2l},\dotsc,a_1\right)\Q\left(x+\frac{y-1}{2},a_{2l},\dotsc,a_1,z\right)\notag\\
&\qquad\times\Q\left(x+\frac{y-1}{2}+a_{2l},\dotsc,a_1\right)\Q\left(x+\frac{y-1}{2}-1,a_{2l}+1,\dotsc,a_1,z-1\right).
\end{align}
This means that the $h$-factor equals $1$. Therefore, the second term on the left-hand side of (\ref{numberrecur1b}) can be simplified as
\begin{align}\label{recur1eq2}
\frac{\Phi_{x+1,y-2,z}(a_1,a_2,\dotsc,a_{2l})}{\Phi_{x,y,z}(a_1,a_2,\dotsc,a_{2l})}\frac{\Phi_{x-1,y,z-1}(a_1,a_2,\dotsc,a_{2l}+1)}{\Phi_{x,y-2,z-1}(a_1,a_2,\dotsc,a_{2l}+1)}
=\frac{(2x)(2x+1)}{(2x+2\s+2y+2z-3)_2}.
\end{align}
By (\ref{recur1eq1}) and (\ref{recur1eq2}), the identity  (\ref{numberrecur1b})  is equivalent to
\begin{equation}
\frac{2(2x+\s+y+z-1)(2\s+2y+2z-3)}{(2x+2\s+2y+2z-3)_2}+\frac{(2x)(2x+1)}{(2x+2\s+2y+2z-3)_2}=1,
\end{equation}
which is easy to be verified.

\bigskip

Next, we verify the recurrence  (\ref{numberrecur1b}) for even $y$. We also write the first term on the left-hand side the recurrence as the product of three component factors. Similar to the case of odd $y$, the $f$-factor can be simplified as
\begin{align}\label{component4}
\frac{f_{x,y-2,z-1}(a_1,a_2,\dotsc,a_{2l}+1)}{f_{x,y,z-1}(a_1,a_2,\dotsc,a_{2l})}\frac{f_{x,y,z}(a_1,a_2,\dotsc,a_{2l})}{f_{x,y-2,z}(a_1,a_2,\dotsc,a_{2l}+1)}=\frac{1}{(2x+2\s+2y+2z-3)_2}.
\end{align}
The $g$-factor becomes
\begin{align}\label{component5}
&\frac{g_{y-2,z-1}(a_1,a_2,\dotsc,a_{2l}+1)}{g_{y,z-1}(a_1,a_2,\dotsc,a_{2l})}\frac{g_{y,z}(a_1,a_2,\dotsc,a_{2l})}{g_{y-2,z}(a_1,a_2,\dotsc,a_{2l}+1)}=\notag\\
&\quad\quad\quad\quad=\frac{(2\e+2z-1)(2\e+2z)(2\s+2y+2z-3)(2\s+2y+2z-2)}{(\s+y+z-1)(\s+z)}.
\end{align}
For simplifying the $h$-factor, we need the following claim from Lemma \ref{QAR}

\begin{claim} For any sequence of nonnegative integers $\textbf{t}=(t_1,t_2,\dotsc,t_{2l})$.
\begin{align}
\frac{\K(t_1,\dots,t_{2l}+1)}{\K(t_1,\dots,t_{2l})}=&\frac{1}{(2\E(\textbf{t}))!(2\s_{2l}(\textbf{t}))!}\prod_{i=1}^{l}\frac{(\s_{2l}(\textbf{t})-\s_{2i-1}(\textbf{t}))!(\s_{2l}(\textbf{t})+\s_{2i}(\textbf{t}))!}{(\s_{2l}(\textbf{t})-\s_{2i}(\textbf{t}))!(\s_{2l}(\textbf{t})+\s_{2i-1}(\textbf{t}))!}.
\end{align}
\end{claim}

This claim implies the following simplification of the $h$-factor
\begin{align}\label{component6}
&\frac{h_{x,y-2,z-1}(a_1,a_2,\dotsc,a_{2l}+1)}{h_{x,y,z-1}(a_1,a_2,\dotsc,a_{2l})}\frac{h_{x,y,z}(a_1,a_2,\dotsc,a_{2l})}{h_{x,y-2,z}(a_1,a_2,\dotsc,a_{2l}+1)}=\notag\\
&\quad\quad\quad\quad=\frac{\K\left(x+\frac{y}{2},a_{2l},\dotsc,a_1,z-1\right)}{\K\left(x+\frac{y}{2},a_{2l},\dotsc,a_1,z\right)}\frac{\K\left(x+\frac{y}{2}-1,a_{2l}+1,\dotsc,a_1,z\right)}{\K\left(x+\frac{y}{2}-1,a_{2l}+1,\dotsc,a_1,z-1\right)}\notag\\
&\quad\quad\quad\quad=\frac{(2x+\s+y+z-1)(\s+z)}{(2\e+2z-1)(2\e+2z)}.
\end{align}
By (\ref{component4}), (\ref{component5}), and (\ref{component6}), the first term on the left-hand side of (\ref{numberrecur1b}) is also reduced to
\begin{align}\label{recur1eq3}
\frac{\Phi_{x,y-2,z-1}(a_1,a_2,\dotsc,a_{2l}+1)}{\Phi_{x,y,z-1}(a_1,a_2,\dotsc,a_{2l})}&\frac{\Phi_{x,y,z}(a_1,a_2,\dotsc,a_{2l})}{\Phi_{x,y-2,z}(a_1,a_2,\dotsc,a_{2l}+1)}
=\frac{2(2x+\s+y+z-1)(2\s+2y+2z-3)}{(2x+2\s+2y+2z-3)_2}.
\end{align}

We repeat the arguments in the case of odd $y$ to the second term on the left-hand side of (\ref{numberrecur1b}). In particular, we write its as the products of the $f$-, $g$-, and $h$-factors. While the $f$-factor can be simplified to
\[\frac{(2x)_{2\s+2y+2z-3}}{(2x+2)_{2\s+2y+2z-3}}=\frac{(2x)(2x+1)}{(2x+2\s+2y+2z-3)_2},\]
the $g$- and $h$-factors are all reducible to $1$. Thus our recurrence (\ref{numberrecur1b}) is also equivalent to
\begin{equation}
\frac{2(2x+\s+y+z-1)(2\s+2y+2z-3)}{(2x+2\s+2y+2z-3)_2}+\frac{(2x)(2x+1)}{(2x+2\s+2y+2z-3)_2}=1,
\end{equation}
which is a true statement.

\bigskip

The rest of the proof is the verification of (\ref{numberrecur3}). We need to show that for $n=2l+1$
\begin{align}\label{numberrecur3b}
&\frac{\Phi_{x+1,y,z}(a_1,a_2,\dotsc,a_{2l+1}-1)\Phi_{x,y,z-1}(a_1,a_2,\dotsc,a_{2l+1})}{\Phi_{x,y,z}(a_1,a_2,\dotsc,a_{2l+1})\Phi_{x+1,y,z-1}(a_1,a_2,\dotsc,a_{2l+1}-1)}\notag\\
&\quad\quad+\frac{\Phi_{x,y+2,z-1}(a_1,a_2,\dotsc,a_{2l+1}-1)\Phi_{x+1,y-2,z}(a_1,a_2,\dotsc,a_{2l+1})}{\Phi_{x,y,z}(a_1,a_2,\dotsc,a_{2l+1})\Phi_{x+1,y,z-1}(a_1,a_2,\dotsc,a_{2l+1}-1)}=1.
\end{align}

\medskip

We consider first the case when $y$ is odd.

To simplify the first term on the left-hand side of (\ref{numberrecur3b}), we also break it down as the product of the $f$-, $h$-, and $g$-factors as
\begin{align}\label{recur3eq1}
&\frac{\Phi_{x+1,y,z}(a_1,a_2,\dotsc,a_{n}-1)\Phi_{x,y,z-1}(a_1,a_2,\dotsc,a_{n})}{\Phi_{x,y,z}(a_1,a_2,\dotsc,a_{n})\Phi_{x+1,y,z-1}(a_1,a_2,\dotsc,a_{n}-1)}=\notag\\
&\quad\quad\quad\frac{f_{x+1,y,z}(a_1,a_2,\dotsc,a_{n}-1)f_{x,y,z-1}(a_1,a_2,\dotsc,a_{n})}{f_{x,y,z}(a_1,a_2,\dotsc,a_{n})f_{x+1,y,z-1}(a_1,a_2,\dotsc,a_{n}-1)}\notag\\
&\quad\quad\quad\times\frac{g_{y,z}(a_1,a_2,\dotsc,a_{n}-1)g_{y,z-1}(a_1,a_2,\dotsc,a_{n})}{g_{y,z}(a_1,a_2,\dotsc,a_{n})g_{y,z-1}(a_1,a_2,\dotsc,a_{n}-1)}\notag\\
&\quad\quad\quad\times\frac{h_{x+1,y,z}(a_1,a_2,\dotsc,a_{n}-1)h_{x,y,z-1}(a_1,a_2,\dotsc,a_{n})}{h_{x,y,z}(a_1,a_2,\dotsc,a_{n})h_{x+1,y,z-1}(a_1,a_2,\dotsc,a_{n}-1)}.
\end{align}
The $f$-factor can be rewritten as
\begin{align}
&\frac{f_{x+1,y,z}(a_1,a_2,\dotsc,a_{n}-1)f_{x,y,z-1}(a_1,a_2,\dotsc,a_{n})}{f_{x,y,z}(a_1,a_2,\dotsc,a_{n})f_{x+1,y,z-1}(a_1,a_2,\dotsc,a_{n}-1)}\notag\\
&=\frac{\prod_{i=1}^{\frac{y-1}{2}}(2x+2i+2)_{2\s+2y+2z-4i-1}\prod_{i=1}^{\frac{y-1}{2}}(2x+2i)_{2\s+2y+2z-4i-1}}
{\prod_{i=1}^{\frac{y-1}{2}}(2x+2i)_{2\s+2y+2z-4i+1}\prod_{i=1}^{\frac{y-1}{2}}(2x+2i+2)_{2\s+2y+2z-4i-3}}\notag\\
&=\prod_{i=1}^{\frac{y-1}{2}}\frac{(2x+2i+2)_{2\s+2y+2z-4i-1}}{(2x+2i+2)_{2\s+2y+2z-4i-3}}\frac{(2x+2i)_{2\s+2y+2z-4i-1}}{(2x+2i)_{2\s+2y+2z-4i+1}}\notag\\
&=\prod_{i=1}^{\frac{y-1}{2}}(2x+2\s+2y+2z-2i-1)_2 \frac{1}{(2x+2\s+2y+2z-2i-1)_2}=1.
\end{align}
The $g$-factor is simplified as
\begin{align}
\frac{g_{y,z}(a_1,a_2,\dotsc,a_{n}-1)g_{y,z-1}(a_1,a_2,\dotsc,a_{n})}{g_{y,z}(a_1,a_2,\dotsc,a_{n})g_{y,z-1}(a_1,a_2,\dotsc,a_{n}-1)}=\frac{(\s+z-1)(2\s+2y+2z-3)_2}{(\s+y+z-2)(2\s+y+2z-2)_2}.
\end{align}
It is also easy to see that the $h$-factor is equal to $1$ in this case. Therefore the first term on the left-hand side of (\ref{numberrecur3b}) is now
\begin{align}
\frac{\Phi_{x+1,y,z}(a_1,a_2,\dotsc,a_{n}-1)\Phi_{x,y,z-1}(a_1,a_2,\dotsc,a_{n})}{\Phi_{x,y,z}(a_1,a_2,\dotsc,a_{n})\Phi_{x+1,y,z-1}(a_1,a_2,\dotsc,a_{n}-1)}
=\frac{2(\s+z-1)(2\s+2y+2z-3)}{(2\s+y+2z-2)_2}.
\end{align}

Arguing similarly for the second term on the left-hand side of (\ref{numberrecur3b}), its $f$- and $h$-factors are both equal to $1$, and its $g$-factor can be reduced to $\frac{y(y-1)}{(
2s+y+2z-2)_2}$. Therefore, (\ref{numberrecur3b}) is equivalent to
\begin{equation}\label{recur3eq2}
\frac{2(\s+z-1)(2\s+2y+2z-3)}{(2\s+y+2z-2)_2}+\frac{y(y-1)}{(
2s+y+2z-2)_2} =1,
\end{equation}
which is obviously true.

Next, we verify (\ref{numberrecur3b}) for even $y$. Similar to the case of odd $y$, the first term on the left-hand side of (\ref{numberrecur3b}) can be broken down as the product of three factors as in (\ref{recur3eq1}). The $f$- and the $h$-factors here are also $1$, while the $g$-factor is reducible to $\frac{2(\s+z-1)(2\s+2y+2z-3)}{(2\s+y+2z-2)_2}$. Thus, the first term is now $\frac{2(\s+z-1)(2\s+2y+2z-3)}{(2\s+y+2z-2)_2}$.  Simplifying similarly the second term on the left-hand side of (\ref{numberrecur3b}), we get $\frac{y(y-1)}{(2\s+y+2z-2)_2}$. It follows that  (\ref{numberrecur3b}) is still equivalent to the true statement (\ref{recur3eq2}).
\end{proof}

Similar to the case of the $\Phi$-functions,  we also define the function $\Phi'_{x,y,z}(\textbf{a})$ to be  the expression on the right-hand side of (\ref{eeb}) or (\ref{oob}) depending on whether $y$ is odd or even.  We also have the following lemma.
\begin{lem}\label{numberrecurb} Assume that $x,y,z,n$ are positive integers and that $\textbf{a}=\{a_i\}_{i=1}^{n}$ is a sequence of positive integers. For $n$ is even, we have
\begin{align}\label{numberrecur3}
\Phi'_{x,y,z}&(a_1,a_2,\dotsc,a_{n})\Phi'_{x,y-2,z-1}(a_1,a_2,\dotsc,a_{n}+1)=\notag\\&\Phi'_{x,y,z-1}(a_1,a_2,\dotsc,a_{n})\Phi'_{x,y-2,z}(a_1,a_2,\dotsc,a_{n}+1)\notag\\
&+\Phi'_{x+1,y-2,z}(a_1,a_2,\dotsc,a_{n})\Phi'_{x-1,y,z-1}(a_1,a_2,\dotsc,a_{n}+1).
\end{align}
For odd $n$
\begin{align}\label{numberrecur4}
\Phi'_{x,y,z}&(a_1,a_2,\dotsc,a_{n})\Phi'_{x+1,y,z-1}(a_1,a_2,\dotsc,a_{n}-1))=\notag\\&\Phi'_{x+1,y,z}(a_1,a_2,\dotsc,a_{n}-1))\Phi'_{x,y,z-1}(a_1,a_2,\dotsc,a_{n}))\notag\\
&+\Phi'_{x,y+2,z-1}(a_1,a_2,\dotsc,a_{n}-1)\Phi'_{x+1,y-2,z}(a_1,a_2,\dotsc,a_{n}),
\end{align}
where   \[\Phi'_{x,y,z}(a_1,a_2,\dotsc,a_{n-1},0):=\Phi'_{x,y,z}(a_1,a_2,\dotsc,a_{n-1})\] by convention.
\end{lem}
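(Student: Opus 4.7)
The plan is to mirror the proof of Lemma \ref{numberrecur} line by line, exploiting the structural observation that the only differences between $\Phi'_{x,y,z}(\textbf{a})$ and $\Phi_{x,y,z}(\textbf{a})$ are (i) the half-integer shift $2x\mapsto 2x-1$ inside the Pochhammer product and (ii) the replacement of $\Q,\K$ by $\Q',\K'$ in the third factor. Comparing (\ref{eeb}), (\ref{oob}) directly against (\ref{ee}), (\ref{oe}) shows that the middle hyperfactorial prefactor $g_{y,z}(\textbf{a})$ is literally identical for $\Phi$ and $\Phi'$ in both parities. I therefore decompose
\[
\Phi'_{x,y,z}(\textbf{a})=f'_{x,y,z}(\textbf{a})\,g_{y,z}(\textbf{a})\,h'_{x,y,z}(\textbf{a}),
\]
with $f'$ the Pochhammer product and $h'$ the product of two $\Q'$- or $\K'$-evaluations, and handle each recurrence factor by factor.

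For the even-$n$ identity (\ref{numberrecur3}) I divide through by $\Phi'_{x,y,z}(\textbf{a})\Phi'_{x,y-2,z-1}(a_1,\dots,a_n+1)$ and simplify the two resulting terms as products of $f'$-, $g$-, and $h'$-ratios in analogy with (\ref{factoreq1}) and (\ref{factoreq2}). The $f'$-telescoping proceeds exactly as the $f$-telescoping, but with $(2x-1)(2x)$ in place of $(2x)(2x+1)$, producing the two pieces $\frac{1}{(2x+2\s+2y+2z-4)_2}$ and $\frac{(2x-1)(2x)}{(2x+2\s+2y+2z-4)_2}$. The $g$-ratios are unchanged. For the $h'$-ratios I need the single-variable increment identities
\begin{equation*}
\frac{\Q'(t_1,\dots,t_{2l}+1)}{\Q'(t_1,\dots,t_{2l})} \quad \text{and} \quad \frac{\K'(t_1,\dots,t_{2l}+1)}{\K'(t_1,\dots,t_{2l})},
\end{equation*}
which are read off directly from (\ref{QARb}) and (\ref{QARd}); after telescoping they give quotients of the same shape as their unprimed counterparts, with only the expected small index offsets. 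The recurrence then collapses to the single polynomial identity
\[
(2x-1)(2x) + 2(2x+\s+y+z-2)(2\s+2y+2z-3) = (2x+2\s+2y+2z-4)(2x+2\s+2y+2z-3),
\]
which is verified by direct expansion.

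For the odd-$n$ identity (\ref{numberrecur4}) the same template applies. The key point, already present in the unprimed proof, is that both the $f'$-ratio and the $h'$-ratio of each of the two terms on the left reduce to $1$ by the symmetric cancellation $(x+1,y)\leftrightarrow(x,y)$ combined with the fact that the two $\Q'$ (resp.\ $\K'$) evaluations are merely permuted between numerator and denominator. The identity thus reduces to the purely $(y,z,\s)$-dependent statement
\begin{equation*}
\frac{2(\s+z-1)(2\s+2y+2z-3)+y(y-1)}{(2\s+y+2z-2)_2}=1,
\end{equation*}
which is elementary and is in fact the very same identity that closes the unprimed case.

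The main obstacle is purely bookkeeping: keeping the index offsets in the $\Q'$- and $\K'$-formulas straight (e.g.\ the $\Hf(2\s_{2i-1}(\textbf{t}))$ in (\ref{QARb}) versus $\Hf(2\s_{2i-1}(\textbf{t})+2)$ in (\ref{QARa})) and verifying that the resulting increment identities combine with the $g$-ratios so that the Pochhammer half-shift $2x\mapsto 2x-1$ flows through correctly. Once this factorization is carried out, every simplification is mechanical and parallels the calculation of Lemma \ref{numberrecur} term for term.
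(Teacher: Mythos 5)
Your proposal is correct and takes essentially the same approach as the paper: the paper in fact omits the proof of this lemma entirely, stating only that it is ``essentially the same as that of Lemma \ref{numberrecur},'' and your factor-by-factor mirror (identical $g$-factor, the shift $2x\mapsto 2x-1$ in the Pochhammer factor, and the $\Q',\K'$ increment ratios in place of $\Q,\K$) is exactly the parallel computation being gestured at. Your closing polynomial identity for the even-$n$ case checks out, and the odd-$n$ case does reduce to the same final identity as in the unprimed proof.
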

The proof of Lemma \ref{numberrecurb} is essentially the same as that of Lemma \ref{numberrecurb}, and will be omitted.

\section{Concluding remarks}\label{Conclusion}
It appears that the number of tilings of a hexagon with two (\emph{not} necessary symmetric) arrays of triangles removed is also given by a simple product formula. This tiling formula and its $q$-analog will be investigated in a separated paper \cite{LM}.

It would be interesting to find a `direct' combinatorial proof for the identities in Corollaries \ref{coro2} and \ref{coro3}, in the sense that the proof does not require tiling enumeration  of each component region.

Based on our data, a halved hexagon with an array of triangles removed from the \emph{western} side seems to have a nice tiling number (recall that in  this paper we are considering the case when the array of triangles has been removed from the \emph{northeastern} side of the halved hexagon). This will be investigated in a next part of this paper.

\end{document}